\theoremstyle{plain}
\newtheorem{theorem}{Theorem}[section]
\newtheorem{lemma}[theorem]{Lemma}
\newtheorem{corollary}[theorem]{Corollary}
\newtheorem{proposition}[theorem]{Proposition}
\theoremstyle{definition}
\newtheorem{definition}[theorem]{Definition}
\newtheorem{remark}[theorem]{Remark}
\newtheorem{Assumption}[theorem]{Assumption}
\def\grad{\mathop{\rm grad}\nolimits}
\def\nor{\mathop{\rm nor}\nolimits}
\def\inj{\mathop{\rm inj}\nolimits}
\def\argmin{\mathop{\rm argmin}\nolimits}
\def\cut{\mathop{\rm Cut}\nolimits}
\def\arccosh{\mathop{\rm arccosh}\nolimits}
\def\arcsinh{\mathop{\rm arcsinh}\nolimits}
\def\lip{\mathop{\rm Lip}\nolimits}
\newcommand{\Rmnum}[1]{\expandafter\@slowromancap\romannumeral#1@}
\begin{document}

\title{Some properties of Fr\'{e}chet medians in Riemannian manifolds}



\author{Le Yang}
\address{Laboratoire de Math\'{e}matiques et Applications, CNRS :
UMR 6086, Universit\'{e} de
Poitiers\\
T\'{e}l\'{e}port 2 - BP 30179, Boulevard Marie et Pierre Curie,
86962 Futuroscope Chasseneuil Cedex, France}
\email{Le.Yang@math.univ-poitiers.fr}

\subjclass[2000]{Primary 58C05, Secondary 62H11, 92C55}

\begin{abstract}
The consistency of Fr\'{e}chet medians is proved for probability
measures in proper metric spaces. In the context of Riemannian
manifolds, assuming that the probability measure has more than a
half mass lying in a convex ball and verifies some concentration
conditions, the positions of its Fr\'{e}chet medians are estimated.
It is also shown that, in compact Riemannian manifolds, the
Fr\'{e}chet sample medians of generic data points are always unique.
\end{abstract}


\thanks{The author is supported by a PhD fellowship Allocation de
Recherche MRT and Thales Air Systems.}

\maketitle


\section{Introduction}
The history of medians can be dated back to 1629 when P. Fermat
initiated a challenge (see \cite{Fermat}): given three points in the
plan, find a fourth one such that the sum of its distances to the
three given points is minimum. The answer to this question, which
was firstly found by E. Torricelli (see \cite{Torricelli}) in 1647,
is that if each angle of the triangle is smaller than $2\pi/3$, then
the minimum point is such that the three segments joining it and the
vertices of the triangle form three angles equal to $2\pi/3$; and in
the opposite case, the minimum point is the vertex whose angle is
greater than or equal to $2\pi/3$. This point is called the median
or the Fermat point of the triangle.

The notion of median also appears in statistics since a long time
ago. In 1774, when P. S. Laplace tried to find an appropriate notion
of the middle point for a group of observation values, he introduced
in \cite{Laplace} ``the middle of probability'', the point that
minimizes the sum of its absolute differences to data points, this
is exactly the one dimensional median.

A sufficiently general notion of median in metric spaces is proposed
in 1948 by M. Fr\'{e}chet in his famous article \cite{Frechet},
where he defines a $p$-mean of a random variable $X$ to be a point
which minimizes the expectation of its distance at the power $p$ to
$X$. In practice, two important cases are $p=1$ and $p=2$, which
correspond to the notions of Fr\'{e}chet median and Fr\'{e}chet
mean, respectively. Probably the most significant advantage of the
median over the mean is that the former is robust but the latter is
not, that is to say, the median is much less sensitive to outliers
than the mean. Roughly speaking (see \cite{Lopuhaa}), in order to
move the median of a group of data points to arbitrarily far, at
least a half of data points should be moved. On the contrary, in
order to move the mean of a group of data points to arbitrarily far,
it suffices to move one data point. So that medians are in some
sense more prudent than means, as argued by M. Fr\'{e}chet. The
robustness property makes the median an important estimator in
situations when there are lots of noise and disturbing factors.

Under the framework of Riemannian manifolds, the existence and
uniqueness of local medians are proved in \cite{Yang} for
probability measures whose support are contained in a convex
geodesic ball. It should be noted that, if the local curvature
conditions in \cite{Yang} are replaced by global ones, then it is
shown in \cite{Afsari} that the local medians are in fact global
medians, that is, Fr\'{e}chet medians. Stochastic algorithms and
deterministic algorithms for computing medians can be found in
\cite{Arnaudon4} and \cite{Yang}.

The aim of this paper is to give some basic properties of
Fr\'{e}chet medians. Firstly, we consider the question of
consistency, which is important on the estimation of location.
Theorem \ref{consistency} states that, in proper metric spaces, if
the first moment functions of a sequence of probability measures
converge uniformly to the first moment function of another
probability measure, then the corresponding sequence of Fr\'{e}chet
median sets also converges to the Fr\'{e}chet median sets of the
limiting measure. As a result, if a probability measure has only one
Fr\'{e}chet median, then any sequence of empirical Fr\'{e}chet
medians will converge almost surely to it. In the second section, we
study the robustness of Fr\'{e}chet medians in Riemannian manifolds.
In Euclidean spaces, it is shown in \cite{Lopuhaa} that if a group
of data points has more than a half concentrated in a bounded
region, then its Fr\'{e}chet median cannot be drown arbitrarily far
when the other points move. A generalization and refinement of this
result for data points in Riemannian manifolds is given in Theorem
\ref{position}, where an upper bound of the furthest distance to
which Fr\'{e}chet medians can move is given in terms of the upper
bound of sectional curvatures, the concentration radius and the
concentrated mass of the probability measure. This theorem also
generalizes a result in \cite{Afsari} which states that if the
probability measure is supported in a strongly convex ball, then all
its Fr\'{e}chet medians lie in that ball. Moreover, though we have
chosen the framework of studying the robustness of Fr\'{e}chet
medians to be Riemannian manifolds, it is easily seen that our
results remain true for general CAT$(\Delta)$ spaces. Finally, the
uniqueness question of Fr\'{e}chet sample medians is considered in
the context of compact Riemannian manifolds. It is shown that, apart
from several events of probability zero, the Fr\'{e}chet sample
medians are unique if the sample vector has a density with respect
to the canonical Lebesgue measure of the product manifold. In other
words, the Fr\'{e}chet medians of generic data points are always
unique.

\section{consistency of fr\'{e}chet medians in metric spaces}

Let $(M,d)$ be a proper metric space (recall that a metric space is
proper if and only if every bounded and closed subset is compact)
and $P_1(M)$ denote the set of all the probability measures $\mu$ on
$M$ verifying
 \[\int_M d(x_0,p)\mu(dp)<\infty,\,\,\text{for\,\,some}\,\,x_0\in M.\]
For every $\mu\in P_1(M)$ we can define a function
\[f_{\mu}:\qquad M \longrightarrow\mathbf{R}_+\,,\qquad x\longmapsto\int_M d(x,p)\mu(dp).\]
This function is 1-Lipschitz hence continuous on $M$. Since $M$ is
proper, $f_{\mu}$ attains its minimum (see \cite[p. 42]{Sahib}), 
so we can give the following definition:

\begin{definition}
Let $\mu$ be a probability measure in $P_1(M)$, then a \emph{global}
minimum point of $f_{\mu}$ is called a Fr\'{e}chet median of $\mu$.
The set of all the Fr\'{e}chet medians of $\mu$ is denoted by
$Q_{\mu}$. Let $f^{*}_{\mu}$ denote the global minimum of $f_{\mu}$.
\end{definition}

Observe that $Q_{\mu}$ is compact, since the triangle inequality
implies that $d(x,y)\leq 2f^*_{\mu}$ for every $x,y\in Q_{\mu}$.

To introduce the next proposition, let us recall that the
$L^1$-Wasserstein distance between two elements $\mu$ and $\nu$ in
$P_1(M)$ is defined by
\[W_1(\mu,\nu)=\inf_{\pi\in\Pi(\mu,\,\nu)}\int_{M\times M}d(x,y)d\pi(x,y),\]
where $\Pi(\mu,\nu)$ is the set of all the probability measures on
$M\times M$ with margins $\mu$ and $\nu$. As a useful case for us,
observe that $f_{\mu}(x)=W_1(\delta_x,\mu)$ for every $x\in M$. The
set of all the 1-Lipschitz functions on $M$ is denoted by
$\lip_1(M)$.

As is well known that Riemannian barycenters are characterized by
convex functions (see \cite[Lemma 7.2]{Kendall1}), the following
proposition shows that Fr\'{e}chet medians can be characterized by
Lipschitz functions.

\begin{proposition}\label{lip}
Let $\mu\in P_1(M)$ and $M$ be also separable, then
$$Q_{\mu}=\bigg\{x\in M: \varphi(x)\leq f^*_{\mu}+\int_M \varphi(p)\mu(dp),\,\,\text{for every}\,\,\varphi\in
\lip_1(M)\bigg\}.$$
\end{proposition}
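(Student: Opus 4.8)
The plan is to prove the two inclusions separately, using the identity $f_\mu(x) = W_1(\delta_x, \mu)$ together with Kantorovich–Rubinstein duality for the $L^1$-Wasserstein distance, which is available because $M$ is a separable proper (hence Polish) metric space. Denote by $R_\mu$ the set on the right-hand side.

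\emph{The inclusion $Q_\mu \subseteq R_\mu$.} Suppose $x \in Q_\mu$, and let $\varphi \in \lip_1(M)$. For any fixed $y \in M$ the function $p \mapsto \varphi(p) - \varphi(y)$ is $1$-Lipschitz and vanishes at $y$, so by the definition of $W_1$ and Kantorovich duality applied to $\delta_y$ and $\mu$ one has $\varphi(z) - \varphi(y) \le d(z,y)$ for all $z$, and integrating against $\mu$ gives $\int_M \varphi(p)\,\mu(dp) - \varphi(y) \le f_\mu(y)$; equivalently $\varphi(y) \le f_\mu(y) + \int_M \varphi(p)\,\mu(dp)$. Wait—this already holds for \emph{every} $y \in M$, not just for medians, with $f_\mu(y)$ in place of $f^*_\mu$. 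Since $x \in Q_\mu$ we have $f_\mu(x) = f^*_\mu$, so $\varphi(x) \le f^*_\mu + \int_M \varphi(p)\,\mu(dp)$, which is exactly the condition defining $R_\mu$. Hence $Q_\mu \subseteq R_\mu$.

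\emph{The inclusion $R_\mu \subseteq Q_\mu$.} Here is where duality does the real work. Fix $x \in R_\mu$; I want to show $f_\mu(x) \le f^*_\mu$. Let $y$ be any Fr\'echet median, so $f_\mu(y) = f^*_\mu$. By Kantorovich–Rubinstein duality, $f_\mu(x) = W_1(\delta_x,\mu) = \sup_{\varphi\in\lip_1(M)}\big(\varphi(x) - \int_M \varphi(p)\,\mu(dp)\big)$, and the supremum is attained (or approached) by some $\varphi$. For such a $\varphi$, the defining inequality of $R_\mu$ gives $\varphi(x) - \int_M \varphi(p)\,\mu(dp) \le f^*_\mu$, and taking the supremum over $\varphi$ yields $f_\mu(x) \le f^*_\mu$. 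Since $f^*_\mu$ is the global minimum, $f_\mu(x) = f^*_\mu$, i.e. $x \in Q_\mu$.

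\emph{Main obstacle.} The only genuine subtlety is the legitimacy of Kantorovich–Rubinstein duality in the form $W_1(\delta_x,\mu) = \sup_{\varphi \in \lip_1(M)} \big(\int \varphi\, d\delta_x - \int \varphi\, d\mu\big)$: this requires $M$ to be a Polish space (separability is exactly what the hypothesis adds to properness) and $\mu \in P_1(M)$ so that $1$-Lipschitz functions are $\mu$-integrable up to an additive constant — both are guaranteed here. One should also note that for $\nu = \delta_x$ the duality is elementary and does not even need the full theorem, since the only coupling of $\delta_x$ and $\mu$ is $\delta_x \otimes \mu$, giving $W_1(\delta_x,\mu) = \int_M d(x,p)\,\mu(dp) = f_\mu(x)$ directly, while the dual side $\sup_\varphi(\varphi(x) - \int\varphi\,d\mu) \le f_\mu(x)$ is the easy half of duality used above, and equality comes from the choice $\varphi(\cdot) = d(x,\cdot)$, which is $1$-Lipschitz. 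So in fact the proposition can be proved without invoking the deep Kantorovich–Rubinstein theorem at all: the test function $\varphi = d(x,\cdot)$ alone realizes $f_\mu(x)$ in the supremum, and plugging it into the definition of $R_\mu$ gives both inclusions cleanly.
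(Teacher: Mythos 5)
Your argument follows essentially the same route as the paper's: both rest on the identity $f_\mu(x)=W_1(\delta_x,\mu)$ and Kantorovich--Rubinstein duality, the paper writing it as a chain of equivalences and you as two separate inclusions. One concrete slip: your extremal test function has the wrong sign. With $\varphi(\cdot)=d(x,\cdot)$ one gets $\varphi(x)=0$ and $\int_M\varphi\,d\mu=f_\mu(x)$, so the defining inequality of $R_\mu$ becomes $0\le f^*_\mu+f_\mu(x)$, which is vacuous and yields nothing; likewise this $\varphi$ realizes $-f_\mu(x)$, not $f_\mu(x)$, in the supremum $\sup_\varphi\bigl(\varphi(x)-\int\varphi\,d\mu\bigr)$. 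The correct choice is $\varphi(\cdot)=-d(x,\cdot)$, for which the inequality reads $0\le f^*_\mu-f_\mu(x)$, i.e.\ $f_\mu(x)\le f^*_\mu$, exactly what is needed. With that sign corrected, your closing observation is a genuine improvement on the paper: the inclusion $Q_\mu\subseteq R_\mu$ needs only the pointwise Lipschitz bound $\varphi(x)\le\varphi(p)+d(x,p)$ integrated against $\mu$ (the ``easy half'' of duality, valid since $|\varphi|$ is $\mu$-integrable for $\mu\in P_1(M)$), and the reverse inclusion needs only the single test function $-d(x,\cdot)$, so the full Kantorovich--Rubinstein theorem --- and hence the separability hypothesis --- is not actually required for this proposition.
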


\begin{proof}
The separability of $M$ ensures that the duality formula of
Kantorovich-Rubinstein (see \cite[p. 107]{Villani}) can be applied,
so that for every $x\in M$,
\begin{align*}
x\in Q_{\mu}&\Longleftrightarrow f_{\mu}(x)\leq
f^*_{\mu}\\&\Longleftrightarrow W_1(\delta_x,\mu)\leq f^*_{\mu}\\
&\Longleftrightarrow\sup_{\varphi\in\lip_1(M)}\bigg|\varphi(x)-\int_M \varphi(p)\mu(dp)\bigg|\leq f^*_{\mu}\\
&\Longleftrightarrow\varphi(x)\leq f^*_{\mu}+\int_M
\varphi(p)\mu(dp),\,\,\text{for every}\,\,\varphi\in \lip_1(M),
\end{align*}
as desired.
\end{proof}

We proceed to show the main result of this section.

\begin{theorem}\label{consistency}
Let $(\mu_n)_{n\in\mathbf{N}}$ be a sequence in $P_1(M)$ and $\mu$
be another probability measure in $P_1(M)$. If $(f_{\mu_n})_n$
converges uniformly on $M$ to $f_{\mu}$, then for every
$\varepsilon>0$, there exists $N\in\mathbf{N}$, such that for every
$n\geq N$ we have
\[Q_{\mu_n}\subset B(Q_{\mu},\varepsilon):=\{x\in M: d(x,
Q_{\mu})<\varepsilon\}.\]
\end{theorem}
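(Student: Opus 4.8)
The plan is to argue by contradiction using the properness of $M$ and the uniform convergence of the first moment functions. Suppose the conclusion fails for some $\varepsilon > 0$; then there is a subsequence $(\mu_{n_k})_k$ and points $x_{n_k} \in Q_{\mu_{n_k}}$ with $d(x_{n_k}, Q_\mu) \geq \varepsilon$ for all $k$. The first step is to show this sequence $(x_{n_k})$ stays in a bounded set. Fix any $q \in Q_\mu$. Since $f_{\mu_{n_k}}(x_{n_k}) = f^*_{\mu_{n_k}} \leq f_{\mu_{n_k}}(q)$, and since uniform convergence gives $f_{\mu_{n_k}}(q) \to f_\mu(q) = f^*_\mu$, the values $f_{\mu_{n_k}}(x_{n_k})$ are bounded, say by some constant $C$. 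Then for a fixed basepoint $x_0$ with $\int_M d(x_0,p)\,\mu_{n_k}(dp) < \infty$, one controls $d(x_0, x_{n_k})$: using $f_{\mu_{n_k}}(x_{n_k}) \geq d(x_0, x_{n_k}) - f_{\mu_{n_k}}(x_0)$ (reverse triangle inequality inside the integral) together with $f_{\mu_{n_k}}(x_0) \to f_\mu(x_0)$, the sequence $(x_{n_k})$ lies in a bounded, hence (by properness) relatively compact, subset of $M$.

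The second step is to pass to a convergent sub-subsequence $x_{n_{k_j}} \to x_\infty \in M$ and identify the limit. Because the $f_{\mu_n}$ are all $1$-Lipschitz and converge uniformly to $f_\mu$, we get $f_{\mu_{n_{k_j}}}(x_{n_{k_j}}) \to f_\mu(x_\infty)$: indeed $|f_{\mu_{n_{k_j}}}(x_{n_{k_j}}) - f_\mu(x_\infty)| \leq \|f_{\mu_{n_{k_j}}} - f_\mu\|_\infty + |f_\mu(x_{n_{k_j}}) - f_\mu(x_\infty)|$, and both terms vanish. Similarly $f_{\mu_{n_{k_j}}}(q) \to f_\mu(q) = f^*_\mu$. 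Since $f_{\mu_{n_{k_j}}}(x_{n_{k_j}}) \leq f_{\mu_{n_{k_j}}}(q)$ for every $j$, taking limits yields $f_\mu(x_\infty) \leq f^*_\mu$, so $x_\infty \in Q_\mu$. But $d(x_\infty, Q_\mu) = \lim_j d(x_{n_{k_j}}, Q_\mu) \geq \varepsilon > 0$, a contradiction. This contradiction establishes the theorem.

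I do not expect a serious obstacle here; the argument is a standard compactness/Gamma-convergence-type argument and the two nontrivial inputs — properness of $M$ giving the compactness needed to extract a convergent subsequence, and uniform convergence letting one interchange limits in $f_{\mu_n}(x_n)$ — are both available. The one point requiring mild care is the boundedness step: one must use the hypothesis that each $\mu_n$ (and $\mu$) lies in $P_1(M)$ and handle the basepoint correctly, but the reverse triangle inequality $d(x_0,x_n) \leq d(x_n,p) + d(x_0,p)$ integrated against $\mu_n$ gives exactly $d(x_0, x_n) \leq f_{\mu_n}(x_n) + f_{\mu_n}(x_0)$, and both terms on the right are bounded along the subsequence, which suffices.
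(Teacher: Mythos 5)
Your proof is correct and uses essentially the same argument as the paper: a contradiction argument combining properness (to extract a convergent subsequence from a bounded sequence of medians, whose limit is then forced into $Q_{\mu}$ by uniform convergence) with the reverse triangle inequality (to rule out the sequence escaping to infinity). The only difference is organizational --- you establish boundedness upfront via $d(x_0,x_{n_k})\leq f_{\mu_{n_k}}(x_{n_k})+f_{\mu_{n_k}}(x_0)$, while the paper splits into bounded and unbounded cases and derives a contradiction in each --- but the underlying estimates are identical.
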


\begin{proof}
We prove this by contradiction. Suppose that the assertion is not
true, then without loss of generality, we can assume that there
exist some $\varepsilon>0$ and a sequence $(x_n)_{n\in \mathbf{N}}$
such that $x_n\in Q_{\mu_n}$ and $x_n\notin B(Q_{\mu},\varepsilon)$
for every $n$. If this sequence is bounded, then by choosing a
subsequence we can assume that $(x_n)_n$ converges to a point
$x_*\notin B(Q_{\mu},\varepsilon)$ because $M\setminus
B(Q_{\mu},\varepsilon)$ is closed. However, observe that the uniform
convergence of $(f_{\mu_n})_n$ to $f_{\mu}$ implies
$f^*_{\mu_n}\longrightarrow f^*_{\mu}$, hence one gets
\begin{align*}
|f_{\mu}(x_*)-f^*_{\mu}|&\leq
|f_{\mu}(x_*)-f_{\mu_n}(x_*)|+|f_{\mu_n}(x_*)-f_{\mu_n}(x_n)|+|f_{\mu_n}(x_n)-f^*_{\mu}|\\
&\leq \sup_{x\in
M}|f_{\mu_n}(x)-f_{\mu}(x)|+d(x_*,x_n)+|f^*_{\mu_n}-f^*_{\mu}|\longrightarrow
0.
\end{align*}
So that $f_{\mu}(x_*)=f^*_{\mu}$, that is to say $x_*\in Q_{\mu}$.
This is impossible, hence $(x_n)_n$ is not bounded. Now we fix a
point $\bar{x}\in Q_{\mu}$, always by choosing a subsequence we can
assume that $d(x_n,\bar{x})\longrightarrow+\infty$, then
\begin{align}\label{infty}
f_{\mu}(x_n)&=\int_M
d(x_n,p)\mu(dp)\geq\int_M(d(x_n,\bar{x})-d(\bar{x},p))\mu(dp)\notag\\
&=d(x_n,\bar{x})-f^*_{\mu}\longrightarrow+\infty.
\end{align}
On the other hand,
\begin{align*}
|f_{\mu}(x_n)-f^*_{\mu}|&\leq
|f_{\mu}(x_n)-f_{\mu_n}(x_n)|+|f_{\mu_n}(x_n)-f^*_{\mu}|\\
&\leq\sup_{x\in
M}|f_{\mu_n}(x)-f_{\mu}(x)|+|f^*_{\mu_n}-f^*_{\mu}|\longrightarrow
0.
\end{align*}
This contradicts \eqref{infty}, the proof is complete.
\end{proof}

\begin{remark}\label{W1}
A sufficient condition to ensure the uniform convergence of
$(f_{\mu_n})_n$ on $M$ to $f_{\mu}$ is that
$W_1(\mu_n,\mu)\longrightarrow0$, since
\[\sup_{x\in
M}|f_{\mu_n}(x)-f_{\mu}(x)|=\sup_{x\in
M}|W_1(\delta_x,\mu_n)-W_1(\delta_x,\mu)|\leq W_1(\mu_n,\mu).\]
\end{remark}

The consistency of Fr\'{e}chet means is proved in \cite[Theorem
2.3]{Bhattacharya}. The consistency of Fr\'{e}chet medians given
below is a corollary to Theorem \ref{consistency}. A similar result
can be found in \cite[p. 44]{Sahib}.

\begin{corollary}
Let $(X_n)_{n\in\mathbf{N}}$ be a sequence of i.i.d random variables
of law $\mu\in P_1(M)$ and $(m_n)_{n\in\mathbf{N}}$ be a sequence of
random variables such that $m_n\in Q_{\mu_n}$ with
$\mu_n=\frac{1}{n}\sum_{k=1}^{n}\delta_{X_k}$. If $\mu$ has a unique
Fr\'{e}chet median $m$, then $m_n\longrightarrow m$ a.s.
\end{corollary}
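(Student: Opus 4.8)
The plan is to deduce this from Theorem \ref{consistency} together with the strong law of large numbers applied to a suitable collection of integrals. The key observation is that $m_n \in Q_{\mu_n}$, so by Theorem \ref{consistency} it suffices to show that, almost surely, $(f_{\mu_n})_n$ converges uniformly on $M$ to $f_\mu$; once we have this, for every $\varepsilon > 0$ there is (a.s.) an $N$ with $Q_{\mu_n} \subset B(Q_\mu, \varepsilon) = B(\{m\}, \varepsilon)$ for all $n \geq N$, whence $d(m_n, m) < \varepsilon$ for all $n \geq N$, which is exactly $m_n \to m$ a.s.

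First I would fix a base point $x_0 \in M$ as in the definition of $P_1(M)$ and, for each $x$, write $f_{\mu_n}(x) = \frac{1}{n}\sum_{k=1}^n d(x, X_k)$. A pointwise strong law of large numbers gives, for each fixed $x$, $f_{\mu_n}(x) \to f_\mu(x)$ a.s., using that $\int_M d(x,p)\,\mu(dp) < \infty$ (finiteness follows from the triangle inequality and $\mu \in P_1(M)$). The issue is upgrading pointwise a.s.\ convergence to uniform a.s.\ convergence. Here I would exploit that every $f_{\mu_n}$ and $f_\mu$ is $1$-Lipschitz: if $M$ were compact this would be immediate from a standard equicontinuity (Arzel\`a--Ascoli / Dini-type) argument applied on a countable dense set. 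In the general proper case, I would instead argue on balls. Applying the strong law to the real random variable $d(x_0, X_1)$, we get $f_{\mu_n}(x_0) \to f_\mu(x_0)$ a.s.; combined with the $1$-Lipschitz property this already controls $f_{\mu_n}$ and $f_\mu$ outside a large ball, because for $d(x, x_0)$ large both $f_{\mu_n}(x)$ and $f_\mu(x)$ are close to $d(x,x_0)$ up to an additive error controlled by $f_{\mu_n}(x_0)$ and $f_\mu(x_0)$ respectively (this is the same estimate as \eqref{infty} in the proof of Theorem \ref{consistency}). On a fixed closed ball $\bar B(x_0, R)$, which is compact since $M$ is proper, I would pick a countable dense subset, apply the strong law simultaneously over this countable set (a countable union of null sets is null), and then use the uniform equicontinuity coming from the common Lipschitz constant $1$ to pass to uniform convergence on $\bar B(x_0, R)$. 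Taking $R$ along a sequence tending to infinity and combining with the exterior estimate yields uniform convergence on all of $M$, almost surely.

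The main obstacle is precisely this passage from pointwise to uniform convergence on the noncompact space $M$: one must handle the behaviour at infinity, and the clean way is the $1$-Lipschitz bound together with the single scalar SLLN at $x_0$, which forces $\sup_{d(x,x_0)\geq R}|f_{\mu_n}(x) - f_\mu(x)|$ to be small once $R$ is large and $n$ is large, uniformly. Alternatively, and more in the spirit of Remark \ref{W1}, one can invoke that $W_1(\mu_n, \mu) \to 0$ a.s.\ for empirical measures of an i.i.d.\ sequence with finite first moment (a known consequence of the law of large numbers for empirical measures in the Wasserstein metric), and then Remark \ref{W1} gives the uniform convergence of $(f_{\mu_n})_n$ to $f_\mu$ directly, after which Theorem \ref{consistency} finishes the argument as above. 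I would present the Lipschitz/SLLN argument as the primary route since it is self-contained given the results already in the paper.
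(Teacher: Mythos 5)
Your overall strategy (reduce to a.s.\ uniform convergence of $f_{\mu_n}$ to $f_\mu$, then invoke Theorem \ref{consistency}) is the right one, and the route you demote to an ``alternative'' --- $W_1(\mu_n,\mu)\to 0$ a.s.\ for empirical measures, hence uniform convergence by Remark \ref{W1} --- is essentially verbatim the paper's proof: the paper verifies $\mu_n\xrightarrow{W_1}\mu$ a.s.\ via the characterization of $W_1$-convergence as weak convergence plus convergence of the first moment functions, both being consequences of the strong law of large numbers. So the argument you set aside is the one that actually closes the proof.

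The route you present as primary has a genuine gap in the exterior estimate. The inequality you invoke gives only $|f_{\mu_n}(x)-d(x,x_0)|\le f_{\mu_n}(x_0)$ and $|f_{\mu}(x)-d(x,x_0)|\le f_{\mu}(x_0)$, hence $|f_{\mu_n}(x)-f_\mu(x)|\le f_{\mu_n}(x_0)+f_\mu(x_0)$, a quantity converging to $2f_\mu(x_0)$, which is \emph{not} small no matter how large $R$ and $n$ are. The scalar SLLN at the single point $x_0$ cannot control $\sup_{d(x,x_0)\ge R}|f_{\mu_n}(x)-f_\mu(x)|$: take $M=\mathbf{R}^2$, $x_0$ the origin and $\mu=\frac12(\delta_{e_1}+\delta_{-e_1})$; then $f_{\mu_n}(x_0)=f_\mu(x_0)=1$ identically for every sample, yet for $x=Re_1$ with $R$ large one computes $f_{\mu_n}(x)-f_\mu(x)=1-2p_n$, where $p_n$ is the empirical frequency of $e_1$. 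This does tend to $0$, but only by a \emph{separate} application of the law of large numbers that your exterior argument does not supply; in effect each ``direction at infinity'' contributes its own test function $p\mapsto d(x,p)-d(x,x_0)$, and controlling all of them simultaneously is exactly what the Kantorovich--Rubinstein bound $\sup_{x}|f_{\mu_n}(x)-f_\mu(x)|\le W_1(\mu_n,\mu)$ packages for you. So either repair the primary route by replacing the exterior step with the $W_1$ bound (at which point it collapses into the paper's proof), or rerun the proof of Theorem \ref{consistency} assuming only locally uniform convergence, observing that a minimizer $x_n$ of $f_{\mu_n}$ satisfies $d(x_n,\bar x)\le 2f_{\mu_n}(\bar x)$, which already rules out escape to infinity.
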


\begin{proof}
By Theorem \ref{consistency} and Remark \ref{W1}, it suffices to
show that $\mu_n\xrightarrow{W_1}\mu$ a.s. This is equivalent to
show that (see \cite[p. 108]{Villani}) for every $f\in C_b(M)$,
\[\frac{1}{n}\sum_{k=1}^{n}f(X_k)\longrightarrow\int_M f(p)\mu(dp)\quad \text{a.s.}\]
and for every $x\in M$,
\[\frac{1}{n}\sum_{k=1}^{n}d(x,X_k)\longrightarrow\int_M
d(x,p)\mu(dp)\quad \text{a.s.}\] These two assertions are trivial
corollaries to the strong law of large numbers, hence the result
holds.
\end{proof}

\section{robustness of fr\'{e}chet medians in riemannian manifolds}
Throughout this section, we assume that $M$ is a complete Riemannian
manifold with dimension no less than 2, whose Riemannian distance is
denoted by $d$. We fix a closed geodesic ball
\[\bar{B}(a,\rho)=\{x\in M: d(x,a)\leq\rho\}\] in $M$ centered at
$a$ with a finite radius $\rho>0$ and a probability measure $\mu\in
P_1(M)$ such that
\[\mu (\bar{B}(a,\rho))=\alpha>\frac{1}{2}.\]

The aim of this section is to estimate the positions of the
Fr\'{e}chet medians of $\mu$, which gives a quantitative estimation
for robustness. To this end,
the following type of functions are of fundamental importance for
our methods. Let $x,z\in M$, define
\[h_{x,z}:\quad
\bar{B}(a,\rho)\longrightarrow\mathbf{R},\quad p\longmapsto
d(x,p)-d(z,p).\] Obviously, $h_{x,z}$ is continuous and attains its
minimum.\\

Our method of estimating the position of $Q_{\mu}$ is essentially
based on the following simple observation.
\begin{proposition}\label{basic principle}
Let $x\in \bar{B}(a,\rho)^c$ and assume that there exists $z\in M$
such that
\[\min_{p\in\bar{B}(a,\rho)}
h_{x,z}(p)>\frac{1-\alpha}{\alpha}\,d(x,z),\] then $x\notin
Q_{\mu}$.
\end{proposition}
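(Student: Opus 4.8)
The plan is to show directly that $f_\mu(z) < f_\mu(x)$, which immediately rules out $x$ as a Fréchet median. The natural way to compare the two first-moment values is to split the integral $f_\mu(x) - f_\mu(z) = \int_M (d(x,p)-d(z,p))\,\mu(dp)$ over the ball $\bar B(a,\rho)$ and its complement, and to estimate each piece using the hypothesis together with the concentration assumption $\mu(\bar B(a,\rho))=\alpha>1/2$.

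First I would bound the integrand on the complement $\bar B(a,\rho)^c$ from below by the reverse triangle inequality: $d(x,p)-d(z,p)\geq -d(x,z)$ for every $p$, so $\int_{\bar B(a,\rho)^c}(d(x,p)-d(z,p))\,\mu(dp)\geq -(1-\alpha)\,d(x,z)$. On the ball itself, $d(x,p)-d(z,p)=h_{x,z}(p)\geq \min_{p\in\bar B(a,\rho)}h_{x,z}(p)$, and since this minimum is strictly greater than $\frac{1-\alpha}{\alpha}d(x,z)$ by hypothesis, we get $\int_{\bar B(a,\rho)}(d(x,p)-d(z,p))\,\mu(dp) > \alpha\cdot\frac{1-\alpha}{\alpha}d(x,z) = (1-\alpha)\,d(x,z)$. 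Adding the two estimates yields $f_\mu(x)-f_\mu(z) > (1-\alpha)\,d(x,z) - (1-\alpha)\,d(x,z) = 0$, hence $f_\mu(x) > f_\mu(z) \geq f^*_\mu$, so $x\notin Q_\mu$.

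Two small points need care. One must check $\mu(\bar B(a,\rho)^c) = 1-\alpha$ and that the masses used in the two multiplications are exactly $\alpha$ and $1-\alpha$; this is immediate from $\mu(\bar B(a,\rho))=\alpha$ and $\mu$ being a probability measure. One should also note that the strict inequality in the hypothesis is what produces the strict inequality $f_\mu(x)>f_\mu(z)$ — if the minimum over the ball were only weakly larger, we would merely get $f_\mu(x)\geq f_\mu(z)$, which would not exclude $x$. Finiteness of the integrals is guaranteed because $\mu\in P_1(M)$ and $d(x,\cdot)$, $d(z,\cdot)$ are integrable against $\mu$, so the splitting and term-by-term bounding are legitimate.

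There is essentially no obstacle here: the statement is, as the author says, a "simple observation," and the only thing to get right is the bookkeeping of the constants $\alpha$ and $1-\alpha$ so that the two contributions cancel precisely against the threshold $\frac{1-\alpha}{\alpha}d(x,z)$. The role of this proposition in the rest of the section will be to reduce estimating $Q_\mu$ to the geometric problem of producing, for each $x$ far from $a$, a good "competitor" point $z$ (presumably closer to $a$, e.g. on the geodesic from $x$ toward $a$) for which $\min_{p\in\bar B(a,\rho)}h_{x,z}$ can be bounded below using curvature comparison.
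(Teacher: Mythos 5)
Your proof is correct and is essentially identical to the paper's: the same decomposition of $f_\mu(x)-f_\mu(z)$ over $\bar B(a,\rho)$ and its complement, with the integrand bounded below by $\min h_{x,z}$ on the ball and by $-d(x,z)$ off it, giving $f_\mu(x)-f_\mu(z)\geq\alpha\min h_{x,z}-(1-\alpha)d(x,z)>0$. Your remarks on the strictness of the inequality and on the role of the proposition as a reduction to finding a competitor $z$ are accurate.
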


\begin{proof}
Clearly one has
\begin{align*}
f_{\mu}(x)-f_{\mu}(z)&=\int_{\bar{B}(a,\rho)}(d(x,p)-d(z,p))\mu(dp)+\int_{M\setminus\bar{B}(a,\rho)}(d(x,p)-d(z,p))\mu(dp)\notag\\
&\geq\alpha\min_{p\in\bar{B}(a,\rho)} h_{x,z}(p)-(1-\alpha)d(x,z)>0.
\end{align*}
The proof is complete.
\end{proof}

By choosing the dominating point $z=a$ in Proposition  \ref{basic
principle} we get the following basic estimation.

\begin{theorem}\label{general estimation}
The set $Q_{\mu}$ of all the Fr\'{e}chet medians of $\mu$ verifies
\[Q_{\mu}\subset \bar{B}\bigg(a,\frac{2\alpha\rho}{2\alpha-1}\bigg).\]
\end{theorem}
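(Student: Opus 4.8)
The plan is to apply Proposition \ref{basic principle} with the dominating point $z = a$ to every point $x$ lying outside the ball $\bar{B}(a, 2\alpha\rho/(2\alpha-1))$. For such an $x$, I need to verify the hypothesis of that proposition, namely that
\[
\min_{p \in \bar{B}(a,\rho)} h_{x,a}(p) = \min_{p \in \bar{B}(a,\rho)} \big(d(x,p) - d(a,p)\big) > \frac{1-\alpha}{\alpha}\, d(x,a).
\]
The key geometric input is the triangle inequality in the form $d(x,p) \geq d(x,a) - d(a,p)$, which gives $h_{x,a}(p) \geq d(x,a) - 2d(a,p) \geq d(x,a) - 2\rho$ for every $p \in \bar{B}(a,\rho)$. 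Hence it suffices to check that $d(x,a) - 2\rho > \frac{1-\alpha}{\alpha} d(x,a)$, i.e. that $d(x,a)\big(1 - \frac{1-\alpha}{\alpha}\big) > 2\rho$, i.e. $d(x,a) \cdot \frac{2\alpha - 1}{\alpha} > 2\rho$, which is exactly $d(x,a) > \frac{2\alpha\rho}{2\alpha-1}$.

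So the argument is essentially: take $x \notin \bar{B}(a, 2\alpha\rho/(2\alpha-1))$; note that since $\alpha > 1/2$ we have $\frac{2\alpha\rho}{2\alpha-1} > \rho$, so in particular $x \in \bar{B}(a,\rho)^c$ and Proposition \ref{basic principle} is applicable; then the chain of inequalities above shows the hypothesis holds with $z=a$, whence $x \notin Q_{\mu}$. Taking the contrapositive over all such $x$ yields $Q_{\mu} \subset \bar{B}(a, 2\alpha\rho/(2\alpha-1))$.

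I do not anticipate a genuine obstacle here: the only things to be careful about are that the bound $\frac{2\alpha\rho}{2\alpha-1}$ is well-defined and strictly larger than $\rho$ (both guaranteed by $\alpha > 1/2$), so that the point $x$ we pick genuinely lies in $\bar{B}(a,\rho)^c$ as required by the proposition, and that the strict inequality in the hypothesis of Proposition \ref{basic principle} is preserved — which it is, since strictness of $d(x,a) > \frac{2\alpha\rho}{2\alpha-1}$ propagates through. No curvature assumption is used at this stage; this is the crude estimate that the later, sharper results in the section will refine.
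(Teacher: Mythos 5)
Your argument is correct and is essentially identical to the paper's: both apply Proposition \ref{basic principle} with $z=a$, use the triangle inequality to get $h_{x,a}(p)\geq d(x,a)-2\rho$ on $\bar{B}(a,\rho)$, and reduce the hypothesis to $d(x,a)>2\alpha\rho/(2\alpha-1)$. Your explicit check that $2\alpha\rho/(2\alpha-1)>\rho$ (so that the proposition is applicable to every $x$ outside the larger ball) is a point the paper handles implicitly by restricting to $Q_{\mu}\cap\bar{B}(a,\rho)^c$, but the two proofs are the same.
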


\begin{proof}
Observe that for every $p\in\bar{B}(a,\rho)$,
\[h_{x,a}(p)=d(x,p)-d(a,p)\geq d(x,a)-2d(a,p)\geq d(x,a)-2\rho.\]
Hence Proposition \ref{basic principle} yields
\begin{align*}
Q_{\mu}\cap \bar{B}(a,\rho)^c&\subset\big\{x\in
M:\min_{p\in\bar{B}(a,\rho)}h_{x,a}(p)\leq\frac{1-\alpha}{\alpha}\,d(x,a)\big\}\\
&\subset\big\{x\in
M:d(x,a)-2\rho\leq\frac{1-\alpha}{\alpha}\,d(x,a)\big\}\\
&=\big\{x\in M:d(x,a)\leq\frac{2\alpha\rho}{2\alpha-1}\big\}.
\end{align*}
The proof is complete.
\end{proof}

\begin{remark}
It is easily seen that the conclusions of Proposition \ref{basic
principle} and Theorem \ref{general estimation} also hold if $M$ is
only a proper metric space.
\end{remark}

\begin{remark}
As a direct corollary to Theorem \ref{general estimation}, if $\mu$
is a probability measure in $P_1(M)$ such that for some point $m\in
M$ one has $\mu\{m\}>1/2$, then $m$ is the unique Fr\'{e}chet median
of $\mu$.
\end{remark}





Thanks to Theorem \ref{general estimation}, from now on we only have
to work in the closed geodesic ball
\[B_{*}=\bar{B}\bigg(a,\frac{2\alpha\rho}{2\alpha-1}\bigg).\]
Thus let $\Delta$ be an upper bound of sectional curvatures in $B_*$
and $\inj$ be the injectivity radius of $B_*$. Moreover, we shall
always assume that the following concentration condition is
fulfilled throughout the rest part of this section:
\begin{Assumption}\label{assump}
\[\frac{2\alpha\rho}{2\alpha-1}<r_*:=\min\{\frac{\pi}{\sqrt{\Delta}}\,\,,\inj\,\},\]
where if $\Delta\leq0$, then $\pi/\sqrt{\Delta}$ is interpreted as
$+\infty$.
\end{Assumption}


In view of Proposition \ref{basic principle} and Theorem
\ref{general estimation}, estimating the position of $Q_{\mu}$ can
be achieved by estimating the minimum of the functions $h_{x,z}$ for
some $x,z\in
B_*$.  
The following lemma enables us to use the comparison argument
proposed in \cite{Afsari} to compare the configurations in $B_*$
with the ones in model spaces in order to obtain lower bounds of the
functions $h_{x,z}$.

\begin{lemma}\label{can be compared}
Let $x\in B_*\setminus\bar{B}(a,\rho)$ and $y$ be the intersection
point of the boundary of $\bar{B}(a,\rho)$ and the minimal geodesic
joining $x$ and $a$. Let $z\neq x$ be another point on the minimal
geodesic joining $x$ and $a$. Assume that $d(a,x)+d(a,z)<r_*$, then
\[\argmin h_{x,z}\subset
\{p\in\bar{B}(a,\rho):\,d(x,p)+d(p,z)+d(z,x)<2r_*\}.\]
\end{lemma}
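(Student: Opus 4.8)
The plan is to fix an arbitrary $p_{0}\in\argmin h_{x,z}$ and to bound the sum $d(x,p_{0})+d(p_{0},z)+d(z,x)$ from above by $2\bigl(d(a,x)+d(a,z)\bigr)$, which is $<2r_{*}$ by hypothesis. The only place where I will use that $p_{0}$ is a minimizer is through the single inequality $h_{x,z}(p_{0})\le h_{x,z}(y)$, which is legitimate because $y\in\partial\bar B(a,\rho)\subset\bar B(a,\rho)$, so $y$ is an admissible competitor; everything else will be triangle inequalities together with the collinearity of $a,y,z,x$.

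First I would compute $h_{x,z}(y)$ explicitly. Since $a,y,z,x$ all lie on one minimal geodesic from $a$ to $x$, with $d(a,y)=\rho<d(a,x)$ (as $x\notin\bar B(a,\rho)$) and $d(a,z)\le d(a,x)$ (as $z$ lies on the segment $[a,x]$), distances between these four points are absolute differences of arclength parameters:
\[ d(x,y)=d(a,x)-\rho,\qquad d(z,x)=d(a,x)-d(a,z),\qquad d(z,y)=|\rho-d(a,z)|, \]
hence $h_{x,z}(y)=d(x,y)-d(z,y)=d(a,x)-\rho-|\rho-d(a,z)|$.

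Next, writing $d(x,p_{0})=h_{x,z}(p_{0})+d(z,p_{0})$, using the crude bound $d(z,p_{0})\le d(z,a)+d(a,p_{0})\le d(a,z)+\rho$ (valid since $p_{0}\in\bar B(a,\rho)$ because $h_{x,z}$ is defined there), and $h_{x,z}(p_{0})\le h_{x,z}(y)$, I would chain
\begin{align*}
d(x,p_{0})+d(p_{0},z)+d(z,x)
&=h_{x,z}(p_{0})+2d(z,p_{0})+d(z,x)\\
&\le h_{x,z}(y)+2\bigl(d(a,z)+\rho\bigr)+\bigl(d(a,x)-d(a,z)\bigr)\\
&=2d(a,x)+d(a,z)+\bigl(\rho-|\rho-d(a,z)|\bigr)\\
&\le 2d(a,x)+2d(a,z)\\
&=2\bigl(d(a,x)+d(a,z)\bigr)<2r_{*},
\end{align*}
where the penultimate step is the elementary inequality $\rho-|\rho-d(a,z)|\le d(a,z)$ and the last one is exactly the standing assumption $d(a,x)+d(a,z)<r_{*}$. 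Since $p_{0}\in\argmin h_{x,z}$ was arbitrary, this yields the claimed inclusion.

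I do not expect a real obstacle: the statement is a bookkeeping lemma, and the one mildly delicate point — that $h_{x,z}(y)$, the extra $\rho$'s from the triangle inequalities, and the term $d(z,x)$ recombine into precisely $2\bigl(d(a,x)+d(a,z)\bigr)$ — is handled uniformly by the inequality $\rho-|\rho-d(a,z)|\le d(a,z)$, which quietly absorbs the two geometrically distinct situations $d(a,z)\le\rho$ (i.e.\ $z$ inside the ball) and $d(a,z)\ge\rho$ (i.e.\ $z$ between $y$ and $x$). If one prefers to avoid the absolute value, one can simply split into these two cases and verify each directly; the case $d(a,z)\le\rho$ uses the hypothesis $d(a,x)+d(a,z)<r_{*}$ literally, and the case $d(a,z)\ge\rho$ uses it after replacing $2\rho$ by $2d(a,z)$.
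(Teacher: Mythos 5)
Your proof is correct and is essentially the paper's argument run in the direct rather than contrapositive direction: the paper assumes $d(x,p)+d(p,z)+d(z,x)\ge 2r_*$ and derives $h_{x,z}(p)>h_{x,z}(y)$ (or an outright contradiction) by splitting on whether $d(a,z)\le\rho$, using exactly your two ingredients $h_{x,z}(p)\le h_{x,z}(y)$ and $d(z,p)\le d(a,z)+\rho$ together with collinearity of $a,y,z,x$. Your inequality $\rho-|\rho-d(a,z)|\le d(a,z)$ merely packages the paper's two cases into one line, so no substantive difference.
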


\begin{proof}
Let $p\in\bar{B}(a,\rho)$ such that $d(x,p)+d(p,z)+d(z,x)\geq2r_*$,
then
\begin{align}\label{compare}
h_{x,z}(p)&\geq2r_*-d(x,z)-2d(z,p)\notag\\
&>2(d(a,x)+d(a,z))-d(x,z)-2(d(a,z)+\rho)\notag\\
&=d(x,y)-d(a,y)+d(a,z).
\end{align}
If $d(a,y)>d(a,z)$, then (\ref{compare}) yields
$h_{x,z}(p)>h_{x,z}(y)$, thus $p$ cannot be a minimum point of
$h_{x,z}$. On the other hand, if $d(a,y)\leq d(a,z)$, then
(\ref{compare}) gives that $h_{x,z}(p)>d(x,y)+d(y,z)\geq d(x,z)$,
which is impossible. Hence in either case, every minimum point $p$
of $h_{x,z}$ must verify $d(x,p)+d(p,z)+d(z,x)<2r_*$.
\end{proof}



As a preparation for the comparison arguments in the following, let
us recall the definition of model spaces. For a real number
$\kappa$, the
model space $\mathbb{M}^2_{\kappa}$ is defined as follows:\\
1) if $\kappa>0$, then $\mathbb{M}^2_{\kappa}$ is obtained from the
sphere $\mathbb{S}^2$ by multiplying the distance function by
$1/\sqrt{\kappa}$;\\
2) if $\kappa=0$, then $\mathbb{M}^2_{\kappa}$ is the Euclidean space $\mathbb{E}^2$;\\
3) if $\kappa<0$, then $\mathbb{M}^2_{\kappa}$ is obtained from the
hyperbolic space $\mathbb{H}^2$ by multiplying the distance function
by $1/\sqrt{-\kappa}$.\\
Moreover, the distance between two points $\bar{x}$ and $\bar{y}$ in
$\mathbb{M}^2_{\kappa}$ will be denoted by
$\bar{d}(\bar{x},\bar{y})$.

The following proposition says that for the positions of Fr\'{e}chet
medians, if comparisons can be done, then the model space
$\mathbb{M}_{\Delta}^2$ is the worst case.

\begin{proposition}\label{sphere worst}
Consider in $\mathbb{M}_{\Delta}^2$ the same configuration as that
in Lemma \ref{can be compared}: a closed geodesic ball
$\bar{B}(\bar{a},\rho)$ and a point $\bar{x}$ such that
$\bar{d}(\bar{x},\bar{a})=d(x,a)$. We denote $\bar{y}$ the
intersection point of the boundary of $\bar{B}(\bar{a},\rho)$ and
the minimal geodesic joining $\bar{x}$ and $\bar{a}$. Let $\bar{z}$
be a point in the minimal geodesic joining $\bar{x}$ and $\bar{a}$
such that $\bar{d}(\bar{a},\bar{z})=d(a,z)$. Assume that
$d(a,x)+d(a,z)<r_*$, then
\[\min_{p\in\bar{B}(a,\rho)}
h_{x,z}(p)\geq\min_{\bar{p}\in\bar{B}(\bar{a},\rho)}
\bar{h}_{\bar{x},\bar{z}}(\bar{p}),\] where
$\bar{h}_{\bar{x},\bar{z}}(\bar{p}):=\bar{d}(\bar{x},\bar{p})-\bar{d}(\bar{z},\bar{p})$.
\end{proposition}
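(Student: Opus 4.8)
The plan is to reduce the statement to a pointwise comparison: for each minimizer $p$ of $h_{x,z}$ in $\bar B(a,\rho)$, produce a point $\bar p\in\bar B(\bar a,\rho)$ with $\bar h_{\bar x,\bar z}(\bar p)\leq h_{x,z}(p)$. By Lemma \ref{can be compared}, any minimizer $p$ satisfies $d(x,p)+d(p,z)+d(z,x)<2r_*$, so the geodesic triangle with vertices $x,z,p$ has perimeter less than $2r_*\leq 2\pi/\sqrt{\Delta}$ and all its vertices lie within $r_*$ of $a$; hence it is a "small" triangle to which Alexandrov/Toponogov comparison applies. I would invoke the comparison theorem exactly as in \cite{Afsari}: since $\Delta$ is an upper bound for sectional curvatures on $B_*$, there is a comparison triangle $\tilde x,\tilde z,\tilde p$ in $\mathbb{M}^2_{\Delta}$ with the same side lengths, and angles in $M$ are no smaller than the corresponding angles in $\mathbb{M}^2_{\Delta}$ (CAT$(\Delta)$ locally).

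Next I would set up the configuration in $\mathbb{M}^2_\Delta$ carefully. Place $\bar x$ and $\bar a$ at distance $d(x,a)$, with $\bar z$ on the segment $[\bar x,\bar a]$ at distance $d(a,z)$ from $\bar a$ (so $\bar d(\bar x,\bar z)=d(x,z)$, since $x,z,a$ are collinear in $M$ by hypothesis), and $\bar y$ the point of $\partial\bar B(\bar a,\rho)$ on that segment. Given a minimizer $p$ of $h_{x,z}$, form the comparison triangle $\triangle\tilde x\tilde z\tilde p$ in $\mathbb{M}^2_\Delta$ for $\triangle xzp$. Since $\tilde x,\tilde z$ and $\bar x,\bar z$ are pairs of points at equal distance $d(x,z)$ in $\mathbb{M}^2_\Delta$, there is an isometry of $\mathbb{M}^2_\Delta$ carrying $\tilde x\mapsto\bar x$, $\tilde z\mapsto\bar z$; let $\bar p$ be the image of $\tilde p$. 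Then $\bar d(\bar x,\bar p)=d(x,p)$ and $\bar d(\bar z,\bar p)=d(z,p)$, so $\bar h_{\bar x,\bar z}(\bar p)=h_{x,z}(p)$ exactly — the issue is \emph{not} the value of $h$ but whether $\bar p\in\bar B(\bar a,\rho)$. This is where the curvature comparison does real work: I must show $\bar d(\bar a,\bar p)\leq\rho$, knowing $d(a,p)\leq\rho$.

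To control $\bar d(\bar a,\bar p)$ I would use the hinge at $\bar z$: in $M$, the distance $d(a,p)$ is determined by the hinge with legs $[z,a]$, $[z,p]$ (of lengths $d(a,z)$, $d(z,p)$) and angle $\angle_z(a,p)$; note $a$ lies on the ray from $z$ opposite to $x$, so $\angle_z(a,p)=\pi-\angle_z(x,p)$. In $\mathbb{M}^2_\Delta$ the hinge at $\bar z$ has the same leg lengths and angle $\pi-\angle_{\bar z}(\bar x,\bar p)=\pi-\tilde\angle_{\tilde z}(\tilde x,\tilde p)$, and by Toponogov $\tilde\angle_{\tilde z}(\tilde x,\tilde p)\leq\angle_z(x,p)$, hence the comparison angle at $\bar z$ between $\bar a$ and $\bar p$ is $\geq\angle_z(a,p)$. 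Since in a model space the hinge-distance $\bar d(\bar a,\bar p)$ is monotone increasing in the included angle (for total perimeter below $2\pi/\sqrt{\Delta}$, which holds here because $d(a,z)+d(z,p)+d(p,a)\leq d(a,z)+d(z,p)+d(z,x)-d(z,a)+\ldots$ — more simply, $\leq 2\rho + d(a,z) < r_* + \rho < 2r_*$), I should instead compare $M$ against $\mathbb{M}^2_\Delta$ the other way: applying the comparison to the triangle $\triangle azp$ directly gives a model triangle $\triangle\bar a'\bar z'\bar p'$ with the same side lengths, and the angle at $\bar z'$ is $\leq\angle_z(a,p)$; combined with the angle at $\tilde z$ in $\triangle\tilde x\tilde z\tilde p$ being $\leq\angle_z(x,p)$, and $\angle_z(a,p)+\angle_z(x,p)=\pi$ in $M$, one gets that the two model angles at $\bar z$ sum to $\leq\pi$, so the configuration $\bar x,\bar z,\bar p$ with $\bar a$ on the opposite ray has $\bar d(\bar a,\bar p)\leq d(a,p)\leq\rho$ by the law of cosines monotonicity in $\mathbb{M}^2_\Delta$. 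Thus $\bar p\in\bar B(\bar a,\rho)$, giving $\min_{\bar p}\bar h_{\bar x,\bar z}(\bar p)\leq\bar h_{\bar x,\bar z}(\bar p)=h_{x,z}(p)=\min_p h_{x,z}(p)$.

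The main obstacle is the last step: rigorously extracting the inequality $\bar d(\bar a,\bar p)\leq\rho$ from the angle bounds, i.e. checking that when we glue the comparison triangle for $\triangle xzp$ to the opposite ray (where $\bar a$ sits), the point $\bar p$ does not escape the ball. This requires the monotonicity of side length as a function of the opposite angle in $\mathbb{M}^2_\Delta$, valid precisely under Assumption \ref{assump} (all relevant perimeters stay below $2\pi/\sqrt\Delta$, and all points stay within the injectivity radius so geodesics and comparison triangles are well-defined and unique), together with the sign bookkeeping $\angle_z(x,p)+\angle_z(a,p)=\pi$ coming from the collinearity of $x,z,a$. Everything else — the existence of comparison triangles, the angle inequalities — is the standard Alexandrov/Toponogov machinery already used in \cite{Afsari}.
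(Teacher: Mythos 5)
Your reduction is the right one (for each minimizer $p$ of $h_{x,z}$, exhibit $\bar p\in\bar B(\bar a,\rho)$ with $\bar h_{\bar x,\bar z}(\bar p)\leq h_{x,z}(p)$), and transplanting the comparison triangle of $\triangle xzp$ so that $\bar h_{\bar x,\bar z}(\bar p)=h_{x,z}(p)$ exactly is a workable variant of what the paper does. But there is a genuine error: you quote the comparison inequalities in the wrong direction, and with the inequalities as you state them the decisive step fails. Since $\Delta$ is an \emph{upper} bound for the sectional curvatures on $B_*$, the relevant statement (Rauch/Alexandrov, the hinge version cited from Chavel, or the local CAT$(\Delta)$ inequality) is that a hinge in $M$ spreads its endpoints \emph{at least} as far as the model hinge with the same data, equivalently that angles of a triangle in $M$ are \emph{no greater} than the corresponding comparison angles: $\angle_z(x,p)\leq\tilde{\angle}_{\tilde z}(\tilde x,\tilde p)$. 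You assert the reverse twice ("angles in $M$ are no smaller than\dots", "$\tilde{\angle}_{\tilde z}(\tilde x,\tilde p)\leq\angle_z(x,p)$"); that is Toponogov's theorem for a \emph{lower} curvature bound, which is not a hypothesis here. This is not cosmetic: from your premises the two comparison angles at $\bar z$ (for $\triangle xzp$ and $\triangle azp$) sum to $\leq\pi$, so $\angle_{\bar z}(\bar a,\bar p)=\pi-\tilde{\angle}_{\tilde z}(\tilde x,\tilde p)\geq\tilde{\angle}'$, where $\tilde{\angle}'$ is the comparison angle of $\triangle azp$ at $z$; the monotonicity of the third side in the included angle that you invoke then yields $\bar d(\bar a,\bar p)\geq d(a,p)$ --- the opposite of what you need. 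With the correct direction the argument does close: both comparison angles dominate the true angles, they sum to $\geq\pi$, hence $\angle_{\bar z}(\bar a,\bar p)\leq\tilde{\angle}'$ and $\bar d(\bar a,\bar p)\leq d(a,p)\leq\rho$, so $\bar p\in\bar B(\bar a,\rho)$.

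For comparison, the paper's proof avoids the two-triangle bookkeeping altogether: it defines $\bar p$ by matching $\bar d(\bar z,\bar p)=d(z,p)$ and the angle $\angle\,\bar a\bar z\bar p=\angle\,azp$ (rather than matching $\bar d(\bar x,\bar p)$). One application of the hinge comparison to the hinge $(z;a,p)$ gives $\bar d(\bar a,\bar p)\leq d(a,p)\leq\rho$, and a second application to the hinge $(z;x,p)$ --- whose angle equals $\angle\,\bar x\bar z\bar p$ by the collinearity of $a,z,x$ --- gives $\bar d(\bar x,\bar p)\leq d(x,p)$, whence $\bar h_{\bar x,\bar z}(\bar p)\leq h_{x,z}(p)$ (an inequality rather than your equality, which is all that is needed). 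Your perimeter checks, via Lemma \ref{can be compared} for $\triangle xzp$ and via $d(a,z)+\rho<d(a,z)+d(a,x)<r_*$ for $\triangle azp$, are exactly the ones required to legitimize either version.
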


\begin{proof}
Let $p\in\argmin h_{x,z}$. Consider a comparison point
$\bar{p}\in\mathbb{M}_{\Delta}^2$ such that
$\bar{d}(\bar{z},\bar{p})=d(z,p)$ and
$\angle\,\bar{a}\bar{z}\bar{p}=\angle\,azp$. Then the assumption
$d(a,x)+d(a,z)<r_*$ and the hinge version of Alexandrov-Toponogov
comparison theorem (see \cite[Exercise IX.1, p. 420]{Chavel}) yield
that $\bar{d}(\bar{a},\bar{p})\leq d(a,p)=\rho$, i.e.
$\bar{p}\in\bar{B}(\bar{a},\rho)$. Now by hinge comparison again and
Lemma \ref{can be compared}, we get $\bar{d}(\bar{p},\bar{x})\leq
d(p,x)$, which implies that
\[h_{x,z}(p)\geq\bar{h}_{\bar{x},\bar{z}}(\bar{p})\geq\min_{\bar{p}\in\bar{B}(\bar{a},\rho)}
\bar{h}_{\bar{x},\bar{z}}(\bar{p}).\] The proof is complete.
\end{proof}


According to Proposition \ref{sphere worst}, it suffices to find the
minima of the functions $h_{x,z}$ when $M$ equals $\mathbb{S}^2$,
$\mathbb{E}^2$ and $\mathbb{H}^2$, which are of constant curvatures
$1$, $0$ and $-1$, respectively.

\begin{proposition}\label{min h}
Let $t,u\geq0$ such that
$u<\rho+t\leq 2\alpha\rho/(2\alpha-1)$.\\

i) If  $M=\mathbb{S}^2$, let $x=(\sin(\rho+t),0,\cos(\rho+t))$ and
$z=(\sin u,0,\cos u)$. Assume that $\rho+t+u<\pi$, then
\[
\min_{\bar{B}(a,\rho)} h_{x,z}=
\begin{cases}
t-\rho+u,\qquad\qquad\qquad\qquad\quad\text{if\quad $\cot
u\geq2\cot\rho-\cot(\rho+t)$;}\\\\
\arccos\bigg(\cos(\rho+t-u)+\cfrac{\sin^2\rho\sin^2(\rho+t-u)}{2\sin
u\sin(\rho+t)}\bigg),~\quad\quad \text{if\quad not.}
\end{cases}
\]

ii) If $M=\mathbb{E}^2$, let $a=(0,0)$, $x=(\rho+t,0)$, $z=(u,0)$,
then
\[
\min_{\bar{B}(a,\rho)} h_{x,z}=
\begin{cases}
t-\rho+u,\qquad\qquad\qquad\qquad\qquad\qquad\text{if\quad$u\leq\cfrac{(\rho+t)\rho}{\rho+2t}$;}\\\\
(\rho+t-u)\sqrt{1-\cfrac{\rho^2}{u(\rho+t)}},\qquad\qquad\qquad\qquad\quad\text{if\quad
not.}
\end{cases}
\]

iii) If $M=\mathbb{H}^2$, let $a=(0,0,1)$,
$x=(\sinh(\rho+t),0,\cosh(\rho+t))$ and $z=(\sinh u,0,\cosh u)$,
then
\[
\min_{\bar{B}(a,\rho)} h_{x,z}=
\begin{cases}
t-\rho+u, \qquad\qquad\qquad\qquad\text{if\quad$\coth
u\geq2\coth\rho-\coth(\rho+t)$;}\\\\
\arccosh\bigg(\cosh(\rho+t-u)-\cfrac{\sinh^2\rho\sinh^2(\rho+t-u)}{2\sinh
u\sinh(\rho+t)}\bigg),\quad\text{if\quad not.}
\end{cases}
\]
\end{proposition}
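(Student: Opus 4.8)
The plan is to fix one of the three model spaces $\mathbb{S}^2$, $\mathbb{E}^2$, $\mathbb{H}^2$, reduce the minimization of $h_{x,z}$ over $\bar B(a,\rho)$ to a one‑variable problem on the boundary circle, and then carry it out by elementary calculus using the spherical, Euclidean, or hyperbolic law of cosines; the two branches of each formula in (i), (ii), (iii) will correspond precisely to the two cases in which a certain derivative does, or does not, change sign on the relevant interval.

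First I would show that $h_{x,z}=\bar d(x,\cdot)-\bar d(z,\cdot)$ attains its minimum over $\bar B(a,\rho)$ on the boundary $\partial\bar B(a,\rho)$. In the interior, $h_{x,z}$ is smooth except at $z$ and, in the spherical case, on the cut loci of $x$ and $z$; at a smooth interior critical point the gradients of $\bar d(x,\cdot)$ and $\bar d(z,\cdot)$ coincide, which forces $p$, $x$, $z$ onto a common geodesic with $x$ and $z$ on the same side of $p$, whence $h_{x,z}(p)=\pm\bar d(x,z)=\pm((\rho+t)-u)$; but $-\bar d(x,z)$ is attained only at points at distance $>\rho$ from $a$ and $+\bar d(x,z)$ is the global maximum of $h_{x,z}$, so no interior point can be a minimizer. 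At $z$ one checks from the triangle inequality that $h_{x,z}$ has no local minimum, and the cut‑locus points are disposed of the same way; in the spherical case the hypothesis $\rho+t+u<\pi$ is exactly what keeps the cut locus of $z$ outside $\bar B(a,\rho)$, which is essential since otherwise $h_{x,z}$ would drop below $0$ there. Hence the minimizer lies on $\partial\bar B(a,\rho)$.

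Next, using the reflection symmetry across the geodesic through $a$, $z$, $x$, I would parametrize $\partial\bar B(a,\rho)$ by the angle $\theta\in[0,\pi]$ at $a$ measured from the direction of $x$, write $p_\theta$ for the corresponding point, and set $c=\cos\theta\in[-1,1]$. The law of cosines in the model space expresses $\bar d(x,p_\theta)$ and $\bar d(z,p_\theta)$ as explicit functions $A(c)$ and $Z(c)$ (through $\cos$, the identity, or $\cosh$ according to the curvature), so one must minimize $G(c):=A(c)-Z(c)$ over $[-1,1]$. A short computation shows $G'(c)$ equals a negative constant times $\lambda_x/\sigma(A(c))-\lambda_z/\sigma(Z(c))$, with $\lambda_x,\lambda_z>0$ constants and $\sigma$ standing for $\sin$, the identity, or $\sinh$; the key point is that $G'(c)=0$, i.e.\ $\lambda_x\,\sigma(Z(c))=\lambda_z\,\sigma(A(c))$, becomes after squaring a \emph{linear} equation in $c$ — the quadratic terms cancel identically — so $G'$ has at most one zero $c^{*}$ on $\mathbf{R}$ (for instance $c^{*}=\rho(\rho+t+u)/\bigl(2u(\rho+t)\bigr)$ in the Euclidean case, with analogous closed forms in the others). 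Tracking the signs of $G'$ at the endpoints then shows the minimum over $[-1,1]$ is $G(1)=\bar d(x,y)-\bar d(z,y)=t-\rho+u$ when $G'(1)<0$ (here $G$ is decreasing throughout, and one checks $u\le\rho$), and is $G(c^{*})$ when $G'(1)\ge0$ (here $c^{*}\in[-1,1]$ is an interior minimizer). Finally, evaluating $G'(1)$ identifies $G'(1)<0$ with exactly the first inequality in (i), (ii), (iii) — e.g.\ $\cot u\ge2\cot\rho-\cot(\rho+t)$ on $\mathbb{S}^2$ — and substituting $c^{*}$ into $A(c)-Z(c)$, simplified via $\lambda_x\sigma(Z(c^{*}))=\lambda_z\sigma(A(c^{*}))$, yields the displayed second‑case formulas.

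I expect the bulk of the work to be the threefold repetition of this law‑of‑cosines bookkeeping and the verification that the derivative‑sign condition coincides with the stated $\cot$/$\coth$ (resp.\ rational) inequalities. The one genuinely load‑bearing idea is the cancellation of the $c^2$‑terms, which is what makes $c^{*}$ unique and the dichotomy clean; the only other delicate point is the reduction to the boundary, where the non‑smooth points of $h_{x,z}$ must be excluded by hand and where, in the spherical case, the hypothesis $\rho+t+u<\pi$ does real work.
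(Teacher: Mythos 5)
Your proposal follows essentially the same route as the paper's proof: reduce the minimization to the boundary circle by excluding interior critical points and the non-smooth points of $h_{x,z}$, parametrize $\partial\bar{B}(a,\rho)$ by the angle at $a$, show the critical equation for the boundary restriction has a unique solution (the paper obtains the same $\cos\theta^{*}=\frac{\tan\rho}{2}(\cot u+\cot(\rho+t))$ via the spherical law of sines and an auxiliary angle $\beta$ rather than your squaring trick, but it is the same linear equation with the quadratic terms cancelling), split into cases according to whether that solution is admissible — exactly the stated $\cot$/$\coth$ dichotomy — and then evaluate at the critical point using the critical relation to simplify. The only points you gloss over that the paper treats explicitly are the exclusion of the boundary point at $\theta=\pi$ (which in the spherical case requires a separate argument, via Lemma \ref{can be compared}, when the circle reaches past the antipode of $x$) and the verification that $d(x,p)-d(z,p)\in(0,\pi)$ at the minimizer so that the second-case value may be written as an $\arccos$ (the paper's Lemma \ref{h is positive}); both are details your sign-tracking of $G'$ would have to supply but do not change the approach.
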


We shall only prove the result for the case when $M=\mathbb{S}^2$,
since the proofs for $M=\mathbb{E}^2$  and $M=\mathbb{H}^2$ are
similar and easier. The proof consists of some lemmas, the first one
below says that $h_{x,z}$ is smooth at its minimum points which can
only appear on the boundary of the ball $\bar{B}(a,\rho)$.

\begin{lemma}\label{differentiable}
Let $x^\prime$ and $z^\prime$ be the antipodes of $x$ and $z$. Then
$z^\prime\notin\bar{B}(a,\rho)$ and all the local minimum points of
$h_{x,z}$ are contained in
$\partial\bar{B}(a,\rho)\setminus\{x^\prime\}$.
\end{lemma}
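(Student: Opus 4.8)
The plan is to work on the sphere $\mathbb{S}^2$ with $x$ and $z$ positioned along the meridian through $a$ as in the statement, and to analyze $h_{x,z}(p)=d(x,p)-d(z,p)$ on the closed ball $\bar B(a,\rho)$. First I would dispose of the antipode claim: since $z=(\sin u,0,\cos u)$ with $u<\rho+t\le 2\alpha\rho/(2\alpha-1)<r_*\le\pi$ by Assumption~\ref{assump}, the antipode $z^\prime$ is at spherical distance $\pi-u$ from $a$, and one checks $\pi-u>\rho$ precisely because $u+\rho<\pi$ (which follows from $\rho+t+u<\pi$ and $t\ge 0$); hence $z^\prime\notin\bar B(a,\rho)$, so $d(\cdot,z)$ is smooth on the ball and $h_{x,z}$ is smooth away from $x^\prime$.

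Next I would locate the local minima. The key observation is that a minimum of $h_{x,z}$ on $\bar B(a,\rho)$ cannot occur in the open ball: if $p$ is interior, then since $d(a,x)+d(a,z)<r_*$ we may invoke Lemma~\ref{can be compared} (or argue directly) to see that moving $p$ radially outward along the geodesic from $z$ through $p$ — equivalently, increasing $d(z,p)$ while the triangle inequality keeps $d(x,p)$ from dropping as fast — strictly decreases $h_{x,z}$; more carefully, at an interior critical point the gradients of $d(x,\cdot)$ and $d(z,\cdot)$ would have to be parallel, forcing $p$ onto the geodesic through $x$ and $z$, and on that geodesic (restricted to the ball) $h_{x,z}$ is monotone, so the critical point is not a local min. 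Therefore all local minima lie on $\partial\bar B(a,\rho)$. Finally, $x^\prime$ itself must be excluded: near $x^\prime$ the function $d(x,\cdot)$ has a strict maximum (value $\pi$), so $-h_{x,z}$ has a strict max there and $h_{x,z}$ a strict min in the radial direction — but one shows $x^\prime\notin\bar B(a,\rho)$ as well, or, if the geometry allowed $x^\prime$ on the boundary, that it is ruled out by the constraint $\rho+t+u<\pi$ giving $d(a,x)=\rho+t<\pi-\rho$ hence $d(a,x^\prime)=\pi-(\rho+t)>\rho$; thus $x^\prime\notin\bar B(a,\rho)$ and in particular $h_{x,z}$ is smooth at every boundary point, including all its local minima.

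The main obstacle I anticipate is the interior-exclusion step: making rigorous the claim that no interior point of the ball is a local minimum. The clean way is the Lagrange/first-order argument — at an interior local minimum $p$ one has $\grad_p d(x,\cdot)=\grad_p d(z,\cdot)$ (both unit vectors), which forces the minimal geodesics $px$ and $pz$ to leave $p$ in the same direction, hence $p$, $x$, $z$ are on one geodesic; since $x$ and $z$ are on the meridian through $a$, this geodesic is that meridian, and along it inside $\bar B(a,\rho)$ the function $p\mapsto d(x,p)-d(z,p)$ is strictly monotone (as $x,z$ are on the same side of the relevant arc), contradicting the local-minimum property. One must also handle the degenerate possibility that $\grad_p d(x,\cdot)$ or $\grad_p d(z,\cdot)$ fails to exist, i.e. $p\in\{x^\prime,z^\prime\}$, but we have just shown both antipodes are outside $\bar B(a,\rho)$. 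With these three pieces assembled the lemma follows; the remaining work of actually computing $\min_{\partial\bar B(a,\rho)}h_{x,z}$ and the resulting case split is deferred to the subsequent lemmas.
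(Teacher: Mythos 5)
Your overall strategy (show $z'$ lies outside the ball, rule out interior local minima by a first-order collinearity argument, then exclude $x'$) is the same as the paper's, and the $z'$ step is correct: $d(a,z')=\pi-u>\rho$ follows from $\rho+t+u<\pi$ and $t\ge 0$. However, two of your three steps contain genuine errors. For the interior step: after forcing an interior critical point $p$ onto the meridian through $a$, $z$, $x$, you claim that $h_{x,z}$ restricted to that meridian is strictly monotone inside the ball. It is not. On the sub-arc of the meridian lying on the far side of $z$ from $x$ (between $z$ and $x'$), one has $d(x,p)=d(x,z)+d(z,p)$, so $h_{x,z}\equiv d(x,z)$ there --- constant --- and this arc always meets the interior of $\bar B(a,\rho)$ (it contains the center $a$ whenever $u>0$). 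Every point of that arc is a genuine interior critical point, so monotonicity cannot exclude it as a local minimum. What does exclude it is the observation that $d(x,z)$ is the \emph{global maximum} value of $h_{x,z}$ (triangle inequality) and that $h_{x,z}$ is not locally constant in the two-dimensional sense, so a global maximum point cannot be a local minimum. This is precisely the paper's argument ($\grad h_{x,z}(p)=0$ forces $h_{x,z}(p)=d(x,z)$, a contradiction), and it is the missing idea in your write-up.

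Your exclusion of $x'$ is also flawed, on both counts. The inference from $\rho+t+u<\pi$ to $\rho+t<\pi-\rho$ is invalid: it gives only $\rho+t<\pi-u$, and $u$ may be smaller than $\rho$ (even $0$). In fact $d(a,x')=\pi-(\rho+t)$ can be less than $\rho$ under Assumption \ref{assump} (take $\alpha=1$, $\rho=0.45\pi$, $t=0.2\pi$, $u$ small: then $2\rho+t>\pi$ and $x'$ lies in the open ball), so $x'$ genuinely can belong to $\bar B(a,\rho)$ and must be excluded by an argument about $h_{x,z}$, not by geometry. Moreover your remark that $h_{x,z}$ has ``a strict min in the radial direction'' at $x'$ has the sign backwards: since $z$ lies on a minimal geodesic from $x$ to $x'$, we get $h_{x,z}(x')=\pi-(\pi-d(x,z))=d(x,z)$, the global maximum, so $x'$ is a non-locally-constant global maximum point and hence not a local minimum --- again the paper's one-line argument. (Your parenthetical appeal to Lemma \ref{can be compared} does not help here either; that lemma constrains $\argmin h_{x,z}$ by perimeter, not by radial monotonicity.) With these two repairs your proof goes through.
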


\begin{proof}
It is easily seen that $d(z^\prime,a)=\pi-u>\rho$, so that
$z^\prime\notin\bar{B}(a,\rho)$. Observe that $x^{\prime}$ is a
global maximum point of $h_{x,z}$ which is not locally constant, so
that $x^{\prime}$ cannot be a local minimum. Now let $p\in
B(a,\rho)$ be a local minimum of $h_{x,z}$, then $h_{x,z}$ is
smooth at $p$. It follows that $\grad h_{x,z}(p)=0$, 
which yields that $h_{x,z}(p)=d(x,z)$, this is a contradiction. The
proof is complete.
\end{proof}


The following lemma characterizes the global minimum points of
$h_{x,z}$.

\begin{lemma}\label{argmin h}
The set of global minimum points of $h_{x,z}$ verifies
\[
\argmin h_{x,z}=
\begin{cases}
\{y\},\quad\quad\quad\quad\quad\quad\quad\quad\quad\quad\quad\text{if $\cot u\geq2\cot\rho-\cot(\rho+t)$;}\\
\{\,p\in\partial\bar{B}(a,\rho):\,\cfrac{\sin(\rho+t)}{\sin
d(x,p)}=\cfrac{\sin u}{\sin
d(z,p)}\,\},\quad\quad\quad\quad\quad\text{if\,\,not,}
\end{cases}
\]
where $y$ is the intersection point of the boundary of
$\bar{B}(a,\rho)$ and the minimal geodesic joining $x$ and $a$.
\end{lemma}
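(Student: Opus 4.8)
The plan is to work entirely on $\mathbb{S}^2$ and exploit the previous lemma, which tells us that $\argmin h_{x,z}$ is contained in $\partial\bar{B}(a,\rho)\setminus\{x'\}$, a circle on which $h_{x,z}$ is smooth. Parametrize $\partial\bar{B}(a,\rho)$ by the angle $\theta$ of the point $p(\theta)$ around $a$, taking $\theta=0$ at $y$. By the spherical law of cosines, $\cos d(x,p(\theta))=\cos(\rho+t)\cos\rho+\sin(\rho+t)\sin\rho\cos\theta$ and $\cos d(z,p(\theta))=\cos u\cos\rho+\sin u\sin\rho\cos\theta$. So $h_{x,z}(p(\theta))=d(x,p(\theta))-d(z,p(\theta))$ is an explicit function of $\theta$ which is even and depends on $\theta$ only through $\cos\theta$; hence it suffices to study it for $\theta\in[0,\pi]$, i.e. for $\cos\theta$ decreasing from $1$ (the point $y$) to $-1$ (the point $\tilde y$ on the geodesic $xa$ beyond $a$).

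First I would differentiate. Writing $g_x(\theta)=\cos d(x,p(\theta))$, one has $d(x,p(\theta))'=-g_x'(\theta)/\sin d(x,p(\theta))=\sin(\rho+t)\sin\rho\sin\theta/\sin d(x,p(\theta))$, and similarly for $z$. Therefore
\[
\frac{d}{d\theta}h_{x,z}(p(\theta))=\sin\rho\sin\theta\left(\frac{\sin(\rho+t)}{\sin d(x,p(\theta))}-\frac{\sin u}{\sin d(z,p(\theta))}\right).
\]
Thus interior critical points in $(0,\pi)$ are exactly the solutions of $\sin(\rho+t)/\sin d(x,p)=\sin u/\sin d(z,p)$, which is the condition appearing in the second branch of the claimed formula; and $\theta=0$ (the point $y$) is always a critical point. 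The key step is to decide when $y$ is actually the global minimum and when instead an interior critical point takes over. I would analyze the sign of the bracket $\Phi(\theta):=\sin(\rho+t)/\sin d(x,p(\theta))-\sin u/\sin d(z,p(\theta))$ near $\theta=0^+$: since $h_{x,z}$ has a critical point at $y$, whether $y$ is a local min is governed by the sign of $\Phi(0^+)$ (equivalently of the second derivative of $h_{x,z}$ at $y$), and a computation reduces this to comparing $\sin(\rho+t)\sin d(z,y)$ with $\sin u\sin d(x,y)$. Using $d(x,y)=t$, $d(z,y)=\rho-u$ and the law of cosines/derivative identities, this inequality should simplify — after using $d(x,p)'$ and $d(z,p)'$ at $\theta=0$ in terms of the angles the geodesics $xy$, $zy$ make with $\partial\bar B(a,\rho)$ — precisely to $\cot u\geq 2\cot\rho-\cot(\rho+t)$. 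That is the dividing line in the statement.

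So the structure is: (1) confirm $y$ is a critical point with $h_{x,z}(y)=t-(\rho-u)=t-\rho+u$ (this is immediate since $x,y,a,z$ are on one geodesic with $y$ between $x$ and $a$, and $z$ between $y$ and $a$ when $u<\rho$, or handle $u\ge\rho$ separately but the same algebra works); (2) show $\Phi(\theta)$ changes sign at most once on $(0,\pi)$, so $h_{x,z}(p(\theta))$ is unimodal-type and the global minimum is either at $y$ or at the unique interior critical point; (3) in the first regime $\Phi(0^+)\ge 0$ forces monotone increase, so the minimum is at $y$, giving value $t-\rho+u$; (4) in the second regime solve $\sin(\rho+t)/\sin d(x,p)=\sin u/\sin d(z,p)$ together with the law of cosines to get $\cos\theta$ explicitly, substitute back, and simplify to $\arccos\big(\cos(\rho+t-u)+\sin^2\rho\sin^2(\rho+t-u)/(2\sin u\sin(\rho+t))\big)$. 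I expect step (2) — controlling the number of sign changes of $\Phi$, equivalently showing $h_{x,z}\circ p$ has no spurious interior critical points — and the endgame algebra in step (4) to be the main obstacles; the rest is bookkeeping with the spherical law of cosines. The monotonicity in (2) can likely be obtained by writing $\Phi(\theta)$ with common denominator and checking that its numerator $\sin^2(\rho+t)\sin^2 d(z,p(\theta))-\sin^2 u\sin^2 d(x,p(\theta))$, after substituting the law-of-cosines expressions, is a (strictly monotone or sign-definite-derivative) function of $\cos\theta$ on $[-1,1]$, so it vanishes at most once.
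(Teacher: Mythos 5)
Your overall strategy is the paper's own: restrict to $\partial\bar B(a,\rho)$ via the preceding lemma, parametrize by $\theta$, compute $h'(\theta)=\sin\rho\sin\theta\,\Phi(\theta)$ with $\Phi(\theta)=\sin(\rho+t)/\sin d(x,p)-\sin u/\sin d(z,p)$, and decide between $y$ and interior critical points by the sign of $\Phi(0^+)$ (equivalently $h''(0)$); your reduction of that sign condition to $\cot u\geq 2\cot\rho-\cot(\rho+t)$ is correct. Your step (2) also works and is a nice shortcut: in the numerator $\sin^2(\rho+t)\sin^2 d(z,p)-\sin^2u\sin^2 d(x,p)$ the $\cos^2\theta$ terms cancel, so it is affine (and, for $\rho<\pi/2$, strictly decreasing) in $\cos\theta$; the paper instead solves the critical-point equation explicitly by the spherical laws of sines and cosines, obtaining $\cos\theta=\tfrac{\tan\rho}{2}(\cot u+\cot(\rho+t))$, which yields the same uniqueness and in addition the explicit location needed later for the value of the minimum.

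The genuine gap is the endpoint $\theta=\pi$, i.e.\ the point $w$ of $\partial\bar B(a,\rho)$ antipodal to $y$ through $a$. From ``$\Phi$ changes sign at most once'' you conclude that the global minimum is ``either at $y$ or at the unique interior critical point'', but a single sign change is equally compatible with $h\circ p$ being decreasing on all of $(0,\pi)$ (or increasing-then-decreasing), in which case the minimum sits at $w$ and the asserted formula fails. This is not vacuous: when $2\rho+t>\pi$ one has $d(x,w)=2\pi-2\rho-t$ and $h_{x,z}(w)=2\pi-3\rho-t-u$, which is not obviously larger than the candidate minima, and the paper spends a separate paragraph ruling $w$ out (using that the perimeter $d(w,x)+d(x,z)+d(z,w)=2\pi$ together with Lemma \ref{can be compared} and $\rho+t+u<\pi$). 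Within your framework the cleanest repair is to check the sign at the other endpoint: $N(-1)=\sin^2(\rho+t)\sin^2(\rho+u)-\sin^2u\,\sin^2(2\rho+t)$ factors as $\sin\rho\,\sin(\rho+t-u)\cdot\bigl(2\cos\rho\sin u\sin(\rho+t)+\sin\rho\sin(\rho+t+u)\bigr)>0$, so $\Phi(\pi^-)>0$ and $w$ is never a local (hence never the global) minimum; combined with monotonicity of the affine numerator this pins the minimum at $y$ in the first regime and at the two symmetric interior critical points in the second. You should also say explicitly why the $u\geq\rho$ case falls into the second branch (there $h_{x,z}(y)=d(x,z)$ is the global maximum, and $\cot u\leq\cot\rho<2\cot\rho-\cot(\rho+t)$), rather than asserting that ``the same algebra works''.
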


\begin{proof}
Thanks to Lemma \ref{differentiable}, it suffices to find the global
minimum points of $h_{x,z}$ for
$p=(\sin\rho\cos\theta,\sin\rho\sin\theta,\cos\rho)$ and $\theta\in
[0,2\pi)$. In this case,
\begin{align*}
h_{x,z}(p)=&\,d(x,p)-d(z,p)\\
=&\,\arccos(\sin(\rho+t)\sin\rho\cos\theta+\cos(\rho+t)\cos\rho)\\
&\,-\arccos(\sin u\sin\rho\cos\theta+\cos u\cos\rho)\\
:= &\,h(\theta).
\end{align*}
Hence let $p=(\sin\rho\cos\theta,\sin\rho\sin\theta,\cos\rho)$ be a
local minimum point of $h_{x,z}$, then Lemma \ref{differentiable}
yields that $h^\prime(\theta)$ exists and equals zero. On the other
hand, by elementary calculation,
\begin{align*}
h^\prime(\theta)=\sin\rho\sin\theta
\bigg(&\frac{\sin(\rho+t)}{\sqrt{1-(\sin(\rho+t)\sin\rho\cos\theta+\cos(\rho+t)\cos\rho)^2}}\\
-&\frac{\sin u}{\sqrt{1-(\sin u\sin\rho\cos\theta+\cos
u\cos\rho)^2}}\bigg)\\
=\sin\rho\sin\theta\bigg(&\frac{\sin(\rho+t)}{\sin
d(x,p)}-\frac{\sin u}{\sin d(z,p)}\bigg).
\end{align*}
Hence we have necessarily
\[\theta=0,\,\,\pi\quad\text{or}\quad\frac{\sin(\rho+t)}{\sin
d(x,p)}=\frac{\sin u}{\sin d(z,p)}.\]

Firstly, we observe that $w$, the corresponding point $p$ when
$\theta=\pi$, cannot be a minimum point. In fact, let $w^\prime$ be
the antipode of $w$. If $d(x,a)<d(w^\prime,a)$, then
$h_{x,z}(w)=d(x,z)$. So that $w$ is a maximum point. On the other
hand, if $d(x,a)\geq d(w^\prime,a)$, then
$d(w,x)+d(x,z)+d(z,w)\equiv2\pi$. Hence Lemma \ref{can be compared}
and the condition $\rho+t+u<\pi$ imply that $w$ is not a minimum
point. So that the assertion holds.

Now assume that $p\neq w,y$ such that
\begin{equation}\label{min point not y}
\frac{\sin(\rho+t)}{\sin d(x,p)}=\frac{\sin u}{\sin d(z,p)}.
\end{equation}
Let $\beta=\angle\,zpa$. Then by the spherical law of sines,
\eqref{min point not y} is equivalent to
$\sin(\beta+\angle\,zpx)=\sin\beta$, i.e. that
$\angle\,zpx=\pi-2\beta$. Applying the spherical law of sines to
$\triangle zpx$ we get
\begin{equation}\label{ls zpx}
\frac{\sin\angle\,x}{\sin d(z,p)}=\frac{\sin2\beta}{\sin(\rho+t-u)}.
\end{equation}
By the spherical law of sines in $\triangle apx$,
\begin{equation}\label{ls apx}
\frac{\sin\angle\,x}{\sin \rho}=\frac{\sin\beta}{\sin(\rho+t)}.
\end{equation}
Then \eqref{ls zpx}/\eqref{ls apx} gives that
\begin{equation}\label{5}
\sin d(z,p)\cos\beta= \frac{\sin\rho\sin(\rho+t-u)}{2\sin (\rho+t)}.
\end{equation}
By the spherical law of cosines in $\triangle azp$,
\begin{equation}\label{6}
\sin d(z,p)\cos\beta=\frac{\cos u-\cos\rho\cos d(z,p)}{\sin\rho}.
\end{equation}
Then \eqref{5} and \eqref{6} give that
\begin{equation}\label{7}
\cos d(z,p)=\frac{2\cos
u\sin(\rho+t)-\sin^2\rho\sin(\rho+t-u)}{2\cos\rho\sin(\rho+t)}.
\end{equation}
Moreover, by \eqref{7} and spherical law of cosines in $\triangle
azp$,
\begin{equation}\label{8}
\cos\theta=\frac{\tan\rho}{2}(\cot u+\cot(\rho+t)).
\end{equation}
The condition $0<\rho+t+u<\pi$ and \eqref{8} give that
\begin{equation}\label{9}
\cos\theta=\frac{\tan\rho\sin(\rho+t+u)}{2\sin u\sin(\rho+t)}>0.
\end{equation}
Furthermore, considering $p\neq y$ we must have $\cos\theta<1$. By
\eqref{8} this is equivalent to
\begin{equation}\label{8.1}
\cot u<2\cot\rho-\cot(\rho+t),
\end{equation}
which is also equivalent to
\begin{equation}\label{10}
\frac{\sin (\rho-u)}{\sin u}<\frac{\sin t}{\sin(\rho+t)}.
\end{equation}

For the case when $u\geq\rho$ it is easily seen that $y$ is a
maximum point of $h_{x,z}$ and hence $p$ must verify \eqref{min
point not y} and the corresponding $\theta$ is determined by
\eqref{8}. Hence in this case, there are exactly two local minimum
points of $h_{x,z}$ and obviously they are also global ones.

Now let $u<\rho$, then easy computation gives
\begin{equation}\label{11}
h^{\prime\prime}(0)=\sin\rho\bigg(\frac{\sin (\rho+t)}{\sin
t}-\frac{\sin u}{\sin(\rho-u)}\bigg).
\end{equation}
Hence if $\frac{\sin (\rho+t)}{\sin t}\geq\frac{\sin
u}{\sin(\rho-u)}$, then \eqref{10} yields that $y$ is the unique
global minimum of $h_{x,z}$. In the opposite case, \eqref{11}
implies that $y$ is a local maximum point. Hence the same argument
as in the case when $u\geq\rho$ completes the proof of lemma.
\end{proof}

We need the following technical lemma.

\begin{lemma}\label{h is positive}
If $\cot u<2\cot\rho-\cot(\rho+t)$, then $0<d(x,p)-d(z,p)<\pi$ for
every $p\in\argmin h_{x,z}$.
\end{lemma}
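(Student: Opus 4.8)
The plan is to work entirely inside the round sphere $\mathbb{S}^2$ with the explicit coordinates fixed in Proposition \ref{min h}(i), and to use the characterization of $\argmin h_{x,z}$ obtained in Lemma \ref{argmin h}. Since we are in the case $\cot u<2\cot\rho-\cot(\rho+t)$, Lemma \ref{argmin h} tells us that every $p\in\argmin h_{x,z}$ lies on $\partial\bar{B}(a,\rho)$ and satisfies the sine relation $\sin(\rho+t)/\sin d(x,p)=\sin u/\sin d(z,p)$, with the associated longitude $\theta$ given by \eqref{9}, in particular $\cos\theta>0$ and $\cos\theta<1$. The quantity to be controlled is $h_{x,z}(p)=d(x,p)-d(z,p)$, and the two inequalities $d(x,p)-d(z,p)>0$ and $d(x,p)-d(z,p)<\pi$ must be established.

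For the upper bound $d(x,p)-d(z,p)<\pi$, I would simply note that $d(x,p)\le\pi$ always, with equality only when $p=x'$, the antipode of $x$; but Lemma \ref{differentiable} already excludes $x'$ from $\argmin h_{x,z}$, so $d(x,p)<\pi$. Combined with $d(z,p)\ge 0$ this is not quite enough, so instead I would use $d(z,p)>0$: since $z\neq x$ lies strictly inside $\bar{B}(a,\rho)$ (indeed $d(a,z)=u<\rho+t$, and more to the point $u<\rho$ or the analysis of Lemma \ref{argmin h} shows $p\neq z$), we have $d(z,p)>0$, hence $d(x,p)-d(z,p)<\pi-0$ needs $d(x,p)<\pi$, which we have. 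Actually the cleanest route: $d(x,p)-d(z,p)\le d(x,z)<\pi$ would be false in general, so I will argue $d(x,p)-d(z,p)<\pi$ directly from $d(x,p)<\pi$ (Lemma \ref{differentiable}) and $d(z,p)\ge 0$, which gives $d(x,p)-d(z,p)<\pi$ precisely when $d(z,p)>0$; and $d(z,p)=0$ would force $p=z$, impossible since $p\in\partial\bar B(a,\rho)$ while $d(z,a)=u<\rho$ (the case $u\ge\rho$ having a maximum at $y$, not relevant, or handled separately). The real content is the \emph{lower} bound, and that is where I expect the main obstacle.

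For $d(x,p)-d(z,p)>0$, i.e. $d(x,p)>d(z,p)$, the geometric intuition is that $x$ is further from the ball than $z$ (since $\rho+t>u$ and they are radially aligned), so any point on the sphere $\partial\bar B(a,\rho)$ should be closer to $z$ than to $x$; but this is not automatic on $\mathbb{S}^2$ because distances are bounded and "going around" can reverse inequalities. The plan is to use the sine relation defining $p$: from $\sin d(x,p)\sin u=\sin d(z,p)\sin(\rho+t)$ and $\sin(\rho+t)>\sin u$ is \emph{not} always true either (if $\rho+t>\pi/2$), so I would instead combine the sine relation with the explicit value \eqref{7} for $\cos d(z,p)$ and the analogous computation for $\cos d(x,p)$ obtained by the same law-of-cosines manipulation in $\triangle axp$. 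Subtracting, one gets an expression for $\cos d(z,p)-\cos d(x,p)$ that should be manifestly positive under the hypothesis $\cot u<2\cot\rho-\cot(\rho+t)$ together with $u<\rho+t\le 2\alpha\rho/(2\alpha-1)<\pi$; since on $[0,\pi]$ the cosine is decreasing, $\cos d(z,p)>\cos d(x,p)$ gives exactly $d(z,p)<d(x,p)$.

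Concretely, I anticipate deriving $\cos d(x,p)=\dfrac{2\cos(\rho+t)\sin u-\sin^2\rho\,\sin(\rho+t-u)}{2\cos\rho\,\sin u}$ (the counterpart of \eqref{7} with the roles of $(\rho+t,u)$ playing symmetrically), so that
\[
\cos d(z,p)-\cos d(x,p)=\frac{\sin(\rho+t-u)}{2\cos\rho}\left(\frac{\sin^2\rho}{\sin u}-\frac{\sin^2\rho}{\sin(\rho+t)}+\frac{2\cos u}{\sin u}\cdot\frac{?}{?}\right),
\]
and then checking positivity reduces to an elementary trigonometric inequality equivalent to, or implied by, \eqref{8.1}/\eqref{10}. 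The main obstacle is exactly this final sign computation: one must be careful that $\cos\rho$ and the various sines keep their signs under the standing assumption $\rho+t<\pi$ and $\rho+t+u<\pi$, and must handle the borderline possibility $\rho>\pi/2$ (which is not excluded a priori, though Assumption \ref{assump} with $\Delta=1$ forces $\rho<2\alpha\rho/(2\alpha-1)<\pi$, not $\rho<\pi/2$). I would discharge this by noting that $r_*\le\pi$ here forces all the relevant arcs to be $<\pi$, keeping every sine nonnegative, and that $d(z,p)<\rho+t-u+\text{something}<\pi$ so both $d(x,p),d(z,p)\in(0,\pi)$, making the passage from the cosine inequality to the distance inequality legitimate. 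Once $0<d(x,p)<\pi$, $0<d(z,p)<\pi$ and $d(z,p)<d(x,p)$ are all in hand, the conclusion $0<d(x,p)-d(z,p)<\pi$ follows immediately.
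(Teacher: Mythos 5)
Your reduction of the claim to the cosine comparison $\cos d(z,p)>\cos d(x,p)$ (legitimate since both distances lie in $[0,\pi]$, with $d(x,p)<\pi$ because Lemma \ref{differentiable} excludes $x'$) is sound, and the upper bound $d(x,p)-d(z,p)<\pi$ is indeed immediate from $d(x,p)<\pi$ and $d(z,p)\geq0$ alone. But the heart of the lemma is the sign of $\cos d(z,p)-\cos d(x,p)$, and there your proposal stops exactly where the work begins. You assert that subtracting the two law-of-cosines expressions yields something ``manifestly positive'' or reducible to ``an elementary trigonometric inequality equivalent to, or implied by, \eqref{8.1}''; this is not verified, the displayed difference still contains placeholders, and your anticipated formula for $\cos d(x,p)$ has the wrong sign (the correct counterpart of \eqref{7}, namely \eqref{13} in the paper, carries $+\sin^2\rho\sin(\rho+t-u)$ in the numerator, because swapping the roles of $u$ and $\rho+t$ reverses the sign of $\rho+t-u$). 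If you carry the subtraction through, the required inequality becomes $\sin(\rho+t)\sin u>\sin^2\rho\,\cos^2\tfrac{\rho+t-u}{2}$, which is true in the admissible range but is nearly an equality close to the threshold $\cot u=2\cot\rho-\cot(\rho+t)$ and is not an obvious consequence of \eqref{8.1}; establishing it is a genuine additional computation that your sketch does not supply. So as written the proof has a gap precisely at its only nontrivial step.

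For comparison, the paper sidesteps this frontal computation by splitting into two cases. When $u<\rho$ it uses \eqref{7} only to show $d(z,p)<\pi/2$ (via the auxiliary inequality \eqref{12}), after which either $d(x,p)\geq\pi/2$ and the conclusion is immediate, or both distances lie in $(0,\pi/2)$ and the sine relation \eqref{min point not y} together with $\sin u<\sin(\rho+t)$ (from $\rho+t+u<\pi$) gives $d(x,p)>d(z,p)$ by monotonicity of $\sin$ on $(0,\pi/2)$. When $u\geq\rho$ it proves the stronger statement that $\cos d(z,p)>\cos d(x,p)$ for \emph{every} boundary point, by showing $g(\theta)=\cos d(z,p)-\cos d(x,p)$ is nondecreasing in $\theta$ with $g(0)>0$; no minimizer characterization is needed at all. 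If you want to salvage your single-formula approach, you must actually prove $\sin(\rho+t)\sin u>\sin^2\rho\,\cos^2\tfrac{\rho+t-u}{2}$ under the standing hypotheses; otherwise adopt a case analysis of the paper's type.
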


\begin{proof}
It suffices to show $d(x,p)>d(z,p)$. For the case when $u<\rho$, we
firstly show that $d(z,p)<\pi/2$. In fact, by \eqref{7} this is
equivalent to show that
\begin{equation}\label{12}
(1+\cos^2\rho)\cos u\sin(\rho+t)+\sin^2\rho\sin u\cos(\rho+t)>0
\end{equation}
If $\rho+t\leq\pi/2$, \eqref{12} is trivially true. Now assume
$\rho+t>\pi/2$. So that $\pi/2<\rho+t<\pi-u$, which implies
$\sin(\rho+t)>\sin u$ and $\cos(\rho+t)>-\cos u$. Hence we get
\begin{align*}
&(1+\cos^2\rho)\cos u\sin(\rho+t)+\sin^2\rho\sin u\cos(\rho+t)\\
>&(1+\cos^2\rho)\cos u\sin u-\sin^2\rho\sin u\cos u\\
=&2\cos^2\rho\cos u\sin u>0.
\end{align*}
So that $d(z,p)<\pi/2$ holds. Now if $d(x,p)\geq\pi/2$, then
obviously $d(x,p)>d(z,p)$. So that assume $d(x,p)<\pi/2$. Observe
that $\rho+t+u<\pi$ implies $\sin u<\sin(\rho+t)$, then \eqref{min
point not y} yields $d(x,p)>d(z,p)$.

For the case when $u\geq \rho$, it suffices to show that $\cos
d(z,p)>\cos d(x,p)$ for every
$p=(\sin\rho\cos\theta,\sin\rho\sin\theta,\cos\rho)$ with
$\theta\in[0,\pi]$. Now let
\begin{align*}
g(\theta)=&\,\sin\rho\cos\theta(\sin u-\sin(\rho+t))+\cos\rho(\cos
u-\cos(\rho+t))\\
=&\,\cos d(z,p)-\cos d(x,p)
\end{align*}
Then $g^{\prime}(\theta)=-\sin\rho\sin\theta(\sin u-\sin(\rho+t)).$
Observe that $\rho+t+u<\pi$ and $u<\rho+t$ imply that $\sin
u<\sin(\rho+t)$, hence $g(\theta)\geq g(0)=\cos d(z,y)-\cos
d(x,y)>0$. The proof is complete.
\end{proof}

\begin{proof}[Proof of Proposition \ref{min h}]
By Lemma \ref{argmin h}, it suffices to consider the case when $\cot
u<2\cot\rho-\cot(\rho+t)$. Let $p\in\argmin h_{x,z}$, then by
\eqref{8} and the spherical law of cosines in $\triangle apx$,
\begin{equation}\label{13}
\cos d(x,p)=\frac{2\cos (\rho+t)\sin
u+\sin^2\rho\sin(\rho+t-u)}{2\cos\rho\sin u}.
\end{equation}
Now let $u=\rho-v$, then $\rho+t-u=t+v$. So that \eqref{7} and
\eqref{13} become
\begin{equation}\label{a7}
\cos d(z,p)=\frac{2\cos
(\rho-v)\sin(\rho+t)-\sin^2\rho\sin(t+v)}{2\cos\rho\sin(\rho+t)}.
\end{equation}
\begin{equation}\label{a13}
\cos d(x,p)=\frac{2\cos (\rho+t)\sin(\rho-v)
+\sin^2\rho\sin(t+v)}{2\cos\rho\sin (\rho-v)}.
\end{equation}
It follows that
\begin{align}\label{a14}
\cos d(z,p)\cos d(x,p)
=&\,(4\cos(\rho-v)\sin(\rho+t)\cos(\rho+t)\sin(\rho-v)\notag\\
&\,+2\sin^2\rho\sin^2(t+v)-\sin^4\rho\sin^2(t+v))\notag\\
&\,/(4\cos^2\rho\sin(\rho+t)\sin(\rho-v)).
\end{align}
On the other hand, \eqref{min point not y} and \eqref{a7} yield that
\begin{align}\label{a15}
\sin d(z,p)\sin d(x,p)
=&\,(1-\cos^2d(z,p))\frac{\sin(\rho+t)}{\sin(\rho-v)}\notag\\
=&\,(4\sin^2(\rho+t)(\cos^2\rho-\cos^2(\rho-v))-\sin^4\rho\sin^2(t+v)\notag\\
&\,+4\sin^2\rho\sin(t+v)\sin(\rho+t)\cos(\rho-v))\notag\\
&\,/(4\cos^2\rho\sin(\rho+t)\sin(\rho-v)).
\end{align}
Then by \eqref{a14} and \eqref{a15} we obtain
\begin{align}
&4\cos^2\rho\sin(\rho+t)\sin(\rho-v)(\cos(d(x,p)-d(z,p))-\cos(t+v))\notag\\
=&\,4\cos^2\rho\sin(\rho+t)\sin(\rho-v)(\cos d(x,p)\cos d(z,p)+\sin d(x,p)\sin d(z,p)-\cos(t+v))\notag\\
=&\,4\cos(\rho-v)\sin(\rho-v)\cos(\rho+t)\sin(\rho+t)+2\sin^2\rho\cos^2\rho\sin^2(t+v)\notag\\
&+4\sin^2(\rho+t)(\cos^2\rho-\cos^2(\rho-v))+4\sin^2\rho\sin(\rho+t)\cos(\rho-v)\sin(t+v))\notag\\
&-4\cos^2\rho\sin(\rho+t)\sin(\rho-v)\cos(t+v)\notag\\
=&\,(-4\cos^4\rho\cos^2v\sin^2t-4\cos^4\rho\sin^2t\sin^2v+4\cos^4\rho\sin^2t)\notag\\
&\,+(-8\cos^3\rho\cos t\cos^2v\sin\rho\sin t-8\cos^3\rho\cos
t\sin\rho\sin t\sin^2v+8\cos^3\rho\cos t\sin\rho\sin t)\notag\\
&\,+(-4\cos^2\rho\cos^2t\cos^2v\sin^2\rho-2\cos^2\rho\cos^2t\sin^2\rho\sin^2v+4\cos^2\rho\cos^2t\sin^2\rho)\notag\\
&\,+4\cos^2\rho\sin^2\rho\sin
v\cos v\sin t\cos t+2\cos^2\rho\cos^2v\sin^2\rho\sin^2t\notag\\
=&\,2\cos^2\rho\sin^2\rho\cos^2t\sin^2v+4\cos^2\rho\sin^2\rho\sin
v\cos v\sin t\cos t+2\cos^2\rho\cos^2v\sin^2\rho\sin^2t\notag\\
=&\,2\cos^2\rho\sin^2\rho\sin^2(t+v).\notag
\end{align}
As a result,
\begin{align*}
\cos(d(x,p)-d(z,p))
=&\,\cos(t+v)+\frac{2\cos^2\rho\sin^2\rho\sin^2(t+v)}{4\cos^2\rho\sin(\rho+t)\sin(\rho-v)}\\
=&\,\cos(t+v)+\frac{\sin^2\rho\sin^2(t+v)}{2\sin(\rho+t)\sin(\rho-v)}\\
=&\,\cos(\rho+t-u)+\frac{\sin^2\rho\sin^2(\rho+t-u)}{2\sin
u\sin(\rho+t)}.
\end{align*}
Now it suffices to use Lemma \ref{h is positive} to finish the
proof.
\end{proof}

We also need the following lemma.

\begin{lemma}\label{F}
Let $\kappa$ be real number and $1/2<\alpha\leq1$. For
$t\in(0,\rho/(2\alpha-1)]$ define
$$
F_{\alpha,\rho,\kappa}(t)=
\begin{cases}
\cot(\sqrt{\kappa}(2\alpha-1)t)-\cot(\sqrt{\kappa} t)-2\cot(\sqrt{\kappa}\rho),& \text{if $\kappa>0$;}\\
(1-\alpha)\rho-(2\alpha-1)t,& \text{if $\kappa=0$;}\\
\coth(\sqrt{-\kappa}(2\alpha-1)t)-\coth(\sqrt{-\kappa}
t)-2\coth(\sqrt{-\kappa}\rho),& \text{if $\kappa<0$.}
\end{cases}
$$
Assume that $1/2<\alpha<1$, then there exists a unique
$t_{\kappa}\in(0,\rho/(2\alpha-1))$ such that
$$\bigg\{t\in(0,\frac{\rho}{2\alpha-1}]:\,\,F_{\alpha,\rho,\kappa}(t)\geq0\bigg\}=(0,t_{\kappa}].$$
In this case, when $\kappa\leq0$, the function
$F_{\alpha,\rho,\kappa}$ is strictly deceasing.
\end{lemma}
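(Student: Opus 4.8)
The plan is to reduce to the model curvatures $\kappa\in\{-1,0,1\}$ by rescaling, and then to treat the three resulting cases. For $\kappa\ne 0$ the substitution $\tilde t=\sqrt{|\kappa|}\,t$ turns $F_{\alpha,\rho,\kappa}$ on $(0,\rho/(2\alpha-1)]$ into $F_{\alpha,\sqrt{|\kappa|}\rho,\,\kappa/|\kappa|}$ on $(0,\sqrt{|\kappa|}\rho/(2\alpha-1)]$ and carries $\{F_{\alpha,\rho,\kappa}\ge 0\}$ onto the corresponding superlevel set, so it is enough to prove the lemma for $\kappa\in\{-1,0,1\}$ with $\rho$ replaced by $\sqrt{|\kappa|}\rho$; when $\kappa>0$ the rescaled radius obeys $2\alpha\sqrt{\kappa}\rho/(2\alpha-1)<\pi$ by Assumption \ref{assump} (recall the lemma is applied with $\kappa=\Delta$, see Proposition \ref{sphere worst}), so all the angles appearing below lie in $(0,\pi)$. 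For $\kappa=0$ everything is immediate: $F_{\alpha,\rho,0}(t)=(1-\alpha)\rho-(2\alpha-1)t$ is affine with slope $-(2\alpha-1)<0$, hence strictly decreasing; since $F_{\alpha,\rho,0}(0)=(1-\alpha)\rho>0$ (here $\alpha<1$) and $F_{\alpha,\rho,0}(\rho/(2\alpha-1))=-\alpha\rho<0$, it has the single zero $t_0=(1-\alpha)\rho/(2\alpha-1)\in(0,\rho/(2\alpha-1))$ and $\{t:F_{\alpha,\rho,0}(t)\ge 0\}=(0,t_0]$.

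For $\kappa=-1$ write $\lambda=2\alpha-1\in(0,1)$ and $F=F_{\alpha,\rho,-1}$. Then $F'(t)=\frac{1}{\sinh^2 t}-\frac{\lambda}{\sinh^2(\lambda t)}$, whose sign is that of $\sinh^2(\lambda t)-\lambda\sinh^2 t$. Since $\sinh$ is strictly convex on $[0,\infty)$ and vanishes at $0$, one has $\sinh(\lambda t)<\lambda\sinh t$, hence $\sinh^2(\lambda t)<\lambda^2\sinh^2 t<\lambda\sinh^2 t$, so $F'<0$ and $F$ is strictly decreasing on the whole interval. From $\coth u\sim 1/u$ as $u\to 0$ one gets $F(t)\sim 2(1-\alpha)/((2\alpha-1)t)\to+\infty$ as $t\to 0^+$, while $F(\rho/(2\alpha-1))=-\coth\rho-\coth(\rho/(2\alpha-1))<0$. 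By monotonicity and the intermediate value theorem there is a unique $t_{-1}\in(0,\rho/(2\alpha-1))$ with $F(t_{-1})=0$, and $\{t:F(t)\ge 0\}=(0,t_{-1}]$; this also establishes the last assertion of the lemma for $\kappa<0$, and it was clear for $\kappa=0$.

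The case $\kappa=1$ is the \emph{main obstacle}, because $F=F_{\alpha,\rho,1}$ need not be monotone. Here $F'(t)=\frac{1}{\sin^2 t}-\frac{\lambda}{\sin^2(\lambda t)}$, so the sign of $F'(t)$ equals that of $(\sin(\lambda t)/\sin t)^2-\lambda$. The key point is that $t\mapsto \sin(\lambda t)/\sin t$ is strictly increasing on $(0,\rho/(2\alpha-1)]$: an elementary manipulation rewrites the numerator of its derivative as $\frac{1-\lambda^2}{2}\,t\left(\frac{\sin((1-\lambda)t)}{(1-\lambda)t}-\frac{\sin((1+\lambda)t)}{(1+\lambda)t}\right)$, and since $v\mapsto\sin(v)/v$ is strictly decreasing on $(0,\pi)$ while $0<(1-\lambda)t<(1+\lambda)t=2\alpha t\le 2\alpha\rho/(2\alpha-1)<\pi$ by Assumption \ref{assump}, this quantity is strictly positive. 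Hence $(\sin(\lambda t)/\sin t)^2$ is strictly increasing with limit $\lambda^2<\lambda$ at $0^+$, so it meets the level $\lambda$ at most once; therefore $F$ is strictly decreasing on an initial interval $(0,t^*]$ and strictly increasing on $[t^*,\rho/(2\alpha-1)]$, where $t^*$ is allowed to coincide with the right endpoint.

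To conclude, note that $F(t)\to+\infty$ as $t\to 0^+$ (again because $\cot u\sim 1/u$ and $\alpha<1$) and that \[F(\rho/(2\alpha-1))=-\cot\rho-\cot(\rho/(2\alpha-1))=-\frac{\sin(2\alpha\rho/(2\alpha-1))}{\sin\rho\;\sin(\rho/(2\alpha-1))}<0,\] the three sines being positive because $2\alpha\rho/(2\alpha-1)<\pi$ (Assumption \ref{assump}) and $\rho/(2\alpha-1)<\pi$. On $[t^*,\rho/(2\alpha-1)]$ the function $F$ is increasing and bounded above by $F(\rho/(2\alpha-1))<0$, hence negative there; on $(0,t^*]$ it decreases strictly from $+\infty$ down to $F(t^*)\le F(\rho/(2\alpha-1))<0$, hence vanishes at exactly one point $t_1\in(0,t^*)$. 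Consequently $\{t:F(t)\ge 0\}=(0,t_1]$ with $t_1\in(0,\rho/(2\alpha-1))$, which completes the proof. The only genuinely delicate step is the unimodality of $F$ in the case $\kappa>0$: it rests on the monotonicity of $v\mapsto\sin(v)/v$ on $(0,\pi)$ together with the bound $2\alpha\rho/(2\alpha-1)<\pi$ supplied by the concentration hypothesis, whereas the cases $\kappa\le 0$ amount to straightforward comparisons.
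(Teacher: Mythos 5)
Your proposal is correct and follows essentially the same route as the paper: compute $F'$, reduce its sign to that of $(\sin((2\alpha-1)t)/\sin t)^2-(2\alpha-1)$ (resp. the hyperbolic analogue), use the monotonicity of that ratio to get unimodality of $F$, and combine with the boundary behaviour $F(0+)=+\infty$ and $F(\rho/(2\alpha-1))<0$. You additionally supply a proof of the monotonicity of $t\mapsto\sin((2\alpha-1)t)/\sin t$ (which the paper merely observes) and write out the cases $\kappa\leq0$ that the paper dismisses as ``similar and easier''; both fillings are correct.
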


\begin{proof}
We only prove the case when $\kappa=1$, since the proof of the other
two cases are similar and easier. Observe that
$F_{\alpha,\rho,1}(0+)=+\infty$ (since $1/2<\alpha<1$) and
$F_{\alpha,\rho,1}(\rho/(2\alpha-1))<0$ (since
$2\alpha\rho/(2\alpha-1)<\pi$), then there exists some
$t_1\in(0,\rho/(2\alpha-1))$ such that $F_{\alpha,\rho,1}(t_1)=0$.
Moreover,
$$F^\prime_{\alpha,\rho,1}(t)=\frac{1}{\sin^2((2\alpha-1)t)}\bigg(\big(\frac{\sin((2\alpha-1)t)}{\sin
t}\big)^2-(2\alpha-1)\bigg).$$ Observe that the function
$l(t)=\sin((2\alpha-1)t)/\sin t$ is strictly increasing on
$(0,\pi/(2\alpha)]$, $l^2(0+)=(2\alpha-1)^2<2\alpha-1$ and
$l^2(\pi/(2\alpha))=1>2\alpha-1$. Hence there exists a unique
$s\in(0,\pi/(2\alpha))$ such that if $t<s$, then
$F^\prime_{\alpha,\rho,1}(t)<0$; if $t=s$, then
$F^\prime_{\alpha,\rho,1}(t)=0$; if $t>s$, then
$F^\prime_{\alpha,\rho,1}(t)>0$. Hence $F_{\alpha,\rho,1}$ is
strictly decreasing on $(0,s]$ and strictly increasing on
$[s,\pi/(2\alpha)]$. Since $\rho/(2\alpha-1)<\pi/(2\alpha)$ and
$F_{\alpha,\rho,1}(\rho/(2\alpha-1))<0$, the point $t_1$ must be
unique. Moreover, it is easily seen that
$\{F_{\alpha,\rho,1}\geq0\}=(0,t_1]$. The proof is complete.
\end{proof}

The main theorem of this section is justified by the lemma below.

\begin{lemma}\label{of sense}
Assumption \ref{assump} implies that
$$\frac{\alpha
S_{\Delta}(\rho)}{\sqrt{2\alpha-1}}<
S_{\Delta}(\frac{r_*}{2}),\,\,\text{where}\,\, S_{\Delta}(t):=
\begin{cases}
\sin(\sqrt{\Delta}\,t),& \text{if $\Delta>0$;}\\
t,& \text{if $\Delta=0$;}\\
\sinh(\sqrt{-\Delta}\,t),& \text{if $\Delta<0$.}
\end{cases}
$$

\end{lemma}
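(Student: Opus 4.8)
The plan is to unwind Assumption \ref{assump}, namely $2\alpha\rho/(2\alpha-1)<r_*$, into the claimed inequality by a direct computation in each of the three cases $\Delta>0$, $\Delta=0$, $\Delta<0$. First set $R:=2\alpha\rho/(2\alpha-1)$, so that Assumption \ref{assump} reads $R<r_*$, equivalently $r_*/2>R/2=\alpha\rho/(2\alpha-1)$. Note also that $\rho<R<r_*$, so in particular $\rho<r_*/2\cdot(2\alpha-1)/\alpha\cdot\ldots$ — more usefully, $\sqrt{\Delta}\,\rho<\sqrt{\Delta}\,R/(2\alpha)\cdot(2\alpha-1)/(2\alpha-1)$; I will keep track of the relevant quantities in the scaled variable $\tau:=\sqrt{\Delta}\,\rho$ when $\Delta>0$, so that $\sqrt{\Delta}\,r_*\le\pi$ gives $\sqrt{\Delta}\,R<\pi$, i.e. $2\alpha\tau/(2\alpha-1)<\pi$, hence $0<\tau<\pi(2\alpha-1)/(2\alpha)<\pi/2$.

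The case $\Delta=0$ is immediate: the inequality reads $\alpha\rho/\sqrt{2\alpha-1}<r_*/2=\alpha\rho/(2\alpha-1)+$(slack), and since Assumption \ref{assump} gives $r_*/2>\alpha\rho/(2\alpha-1)$ while $2\alpha-1<\sqrt{2\alpha-1}$ for $1/2<\alpha<1$ (so $\alpha\rho/\sqrt{2\alpha-1}<\alpha\rho/(2\alpha-1)$), the chain $\alpha\rho/\sqrt{2\alpha-1}<\alpha\rho/(2\alpha-1)<r_*/2$ closes it — and for $\alpha=1$ both sides collapse to $\rho<r_*/2$, which also holds. For $\Delta>0$, dividing the target inequality by $\sqrt{\Delta}$ does not help directly; instead I would reduce to showing
\[\frac{\alpha\sin(\sqrt{\Delta}\,\rho)}{\sqrt{2\alpha-1}}<\sin\!\Big(\sqrt{\Delta}\,\frac{r_*}{2}\Big),\]
and since $\sin$ is increasing on $[0,\pi/2]$ and both arguments lie there, it suffices to bound $\sin(\sqrt{\Delta}\,r_*/2)$ below using $\sqrt{\Delta}\,r_*/2>\sqrt{\Delta}\,R/2=\alpha\sqrt{\Delta}\,\rho/(2\alpha-1)$. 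So the crux becomes the pure inequality: for $0<\tau<\pi/2$ with $\alpha\tau/(2\alpha-1)<\pi/2$,
\[\frac{\alpha\sin\tau}{\sqrt{2\alpha-1}}<\sin\!\Big(\frac{\alpha\tau}{2\alpha-1}\Big).\]
The analogous reduction for $\Delta<0$ replaces $\sin$ by $\sinh$ (now with no upper endpoint constraint needed, since $\sinh$ is increasing on all of $\mathbf{R}_+$), giving the target $\frac{\alpha\sinh\tau}{\sqrt{2\alpha-1}}<\sinh(\alpha\tau/(2\alpha-1))$ with $\tau:=\sqrt{-\Delta}\,\rho>0$.

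The main obstacle is thus these two one-variable trigonometric/hyperbolic inequalities. I would attack each by fixing $\alpha\in(1/2,1)$ and studying the function of $\tau$ obtained as the ratio (or difference) of the two sides: at $\tau=0+$ both sides vanish to first order, and comparing derivatives at $0$ gives leading coefficients $\alpha/\sqrt{2\alpha-1}$ versus $\alpha/(2\alpha-1)$, and since $\sqrt{2\alpha-1}<1$ we get $\alpha/\sqrt{2\alpha-1}>\alpha/(2\alpha-1)$ — wait, that is the wrong direction, so the first-order terms alone do not settle it; one must use that $\sin$ is concave (bending the right-hand side down more slowly in the relevant sense) — more precisely, I expect the clean route is to write $\beta:=\alpha/(2\alpha-1)>1$ and prove $\sqrt{2\alpha-1}\,\sin(\beta\tau)>\alpha\sin\tau$, i.e. $\sin(\beta\tau)/\sin\tau>\alpha/\sqrt{2\alpha-1}=\sqrt{2\alpha-1}\cdot\beta$, then show the left side is increasing in $\tau$ on the admissible range (the monotonicity of $t\mapsto\sin(\beta t)/\sin t$ is exactly the lemma-type fact established in the proof of Lemma \ref{F}, with $\beta$ in place of $2\alpha-1$), so it exceeds its value at $\tau=0+$, namely $\beta$, and it remains only to check $\beta>\sqrt{2\alpha-1}\,\beta$, i.e. $1>\sqrt{2\alpha-1}$, which is true. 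The hyperbolic case $\Delta<0$ runs in parallel using that $t\mapsto\sinh(\beta t)/\sinh t$ is likewise increasing with the same limit $\beta$ at $0$. Finally I would note the degenerate endpoint $\alpha=1$ separately as above, and assemble the three cases into the statement.
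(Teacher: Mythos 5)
Your handling of $\Delta=0$ is fine, and so is $\Delta<0$ (there one only needs $\sinh(\beta\tau)\geq\beta\sinh\tau$ for $\beta=\alpha/(2\alpha-1)\geq1$, which follows from the monotonicity of $x\mapsto\sinh x/x$). Your reduction of the case $\Delta>0$ to the one\--variable inequality $\alpha\sin\tau/\sqrt{2\alpha-1}\leq\sin(\beta\tau)$ on the range $\beta\tau\in(0,\pi/2)$ is also sound, and after the substitution $c=\beta\tau$ it is literally the inequality the paper isolates. But your argument for that crux inequality is broken: for $\beta>1$ the function $t\mapsto\sin(\beta t)/\sin t$ is strictly \emph{decreasing}, not increasing. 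The monotonicity fact appearing in the proof of Lemma \ref{F} concerns $\sin(ct)/\sin t$ with $c=2\alpha-1<1$, and the direction reverses when the coefficient exceeds $1$: since $x\mapsto\sin x/x$ decreases on $(0,\pi]$, one has $\sin(\beta t)=\beta t\cdot\frac{\sin(\beta t)}{\beta t}\leq\beta t\cdot\frac{\sin t}{t}=\beta\sin t$. So $\sin(\beta\tau)/\sin\tau$ tends to $\beta$ as $\tau\to0+$ and then falls \emph{below} $\beta$; it does not ``exceed its value at $0+$,'' and the final comparison $\beta>\sqrt{2\alpha-1}\,\beta$ gives you nothing. (Your first\--order check also has a sign slip: since $2\alpha-1<\sqrt{2\alpha-1}$ one gets $\alpha/\sqrt{2\alpha-1}<\alpha/(2\alpha-1)$, so the derivatives at $0$ do go the right way — but that alone proves nothing globally.)

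What remains to be shown is that the decreasing ratio never drops below $\sqrt{2\alpha-1}\,\beta=\alpha/\sqrt{2\alpha-1}$ on the whole admissible range, equivalently that $\frac{\alpha}{\sqrt{2\alpha-1}}\sin\bigl(\frac{2\alpha-1}{\alpha}c\bigr)<\sin c$ for $c\in(0,\pi/2]$. That is precisely the nontrivial content of the lemma, and it is not a soft fact: both sides agree to first order as $\alpha\to1$, and the margin is only of order $(1-\alpha)^2$ (e.g.\ $\alpha=0.99$, $c=\pi/2$ gives roughly $1.00005$ versus $1.00013$). The paper proves it by showing that $\alpha\mapsto\frac{\alpha}{\sqrt{2\alpha-1}}\sin(\frac{2\alpha-1}{\alpha}c)$ is strictly increasing on $(1/2,1)$ with limit $\sin c$ at $\alpha=1$, the positivity of the derivative resting on the Becker--Stark bound $\tan x/x<1/(1-(2x/\pi)^2)$. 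Some quantitative input of this kind is required; your proposal as written does not supply it, so the spherical case is a genuine gap.
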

\begin{proof}
We only prove the case when $\Delta>0$, since the proof for the
cases when $\Delta\leq0$ are easier. Without loss of generality, we
can assume that $\Delta=1$. Since
$(2\alpha-1)r_*/(2\alpha)\leq\pi/2$, we have
$$\frac{2\alpha\rho}{2\alpha-1}<r_*\Longleftrightarrow\sin\rho<\sin\frac{2\alpha-1}{2\alpha}r_*.$$
So that it is sufficient to show
$$\frac{\alpha}{\sqrt{2\alpha-1}}\sin\frac{2\alpha-1}{2\alpha}r_*<\sin\frac{r_*}{2}.$$
To this end, let $c=r_*/2\in(0,\pi/2]$, we will show that the
function
$$f(\alpha)=\frac{\alpha}{\sqrt{2\alpha-1}}\sin\frac{2\alpha-1}{\alpha}c$$
is strictly increasing for $\alpha\in(1/2,1)$. Easy computation
gives that
$$f^{\prime}(\alpha)>0\Longleftrightarrow\frac{\tan(\theta c)}{\theta c}<\frac{2-\theta}{1-\theta},$$
where $\theta=(2\alpha-1)/\alpha\in(0,1)$. Observe that the function
$x\longmapsto \tan x/x$ is increasing on $[0,\pi/2)$,  hence it
suffices to show that
$$\frac{\tan(\theta\pi/2)}{(\theta\pi/2)}<\frac{2-\theta}{1-\theta}.$$
This is true because Becker-Stark inequality (see \cite{Becker})
yields
$$\frac{\tan(\theta\pi/2)}{(\theta\pi/2)}<\frac{1}{1-\theta^2}<\frac{2-\theta}{1-\theta}.$$
The proof is complete.
\end{proof}

Now we are ready to give the main result of this section.

\begin{theorem}\label{position}
The following estimations hold:\\

i) If $\Delta>0$ and $Q_{\mu}\subset \bar{B}(a,r_*/2)$, then
$$
Q_{\mu}\subset\bar{B}\bigg(a,\frac{1}{\sqrt{\Delta}}\arcsin\big(\frac{\alpha\sin(\sqrt{\Delta}\rho)}{\sqrt{2\alpha-1}}\big)\bigg).
$$
Moreover, any of the two conditions below implies $Q_{\mu}\subset
\bar{B}(a,r_*/2)$:

$$a)\quad\frac{2\alpha\rho}{2\alpha-1}\leq\frac{r_*}{2};
\qquad b)\quad
\frac{2\alpha\rho}{2\alpha-1}>\frac{r_*}{2}\quad\text{and}\quad
F_{\alpha,\rho,\Delta}(\frac{r_*}{2}-\rho)\leq0.$$

ii) If $\Delta=0$, then
$$Q_{\mu}\subset\bar{B}\bigg(a,\frac{\alpha\rho}{\sqrt{2\alpha-1}}\bigg).
$$

iii) If $\Delta<0$, then
$$Q_{\mu}\subset\bar{B}\bigg(a,\frac{1}{\sqrt{-\Delta}}\arcsinh\big(\frac{\alpha\sinh(\sqrt{-\Delta}\rho)}{\sqrt{2\alpha-1}}\big)\bigg).
$$
Finally, Lemma \ref{of sense} ensures that any of the above three
closed balls is contained in the open ball $B(a,r_*/2)$.
\end{theorem}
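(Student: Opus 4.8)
The plan is to play off three earlier results against each other: Proposition \ref{sphere worst} to replace the manifold by the constant-curvature model $\mathbb{M}^2_\Delta$, the explicit formulas of Proposition \ref{min h} for that model (applied to $\mathbb{S}^2$, $\mathbb{E}^2$ or $\mathbb{H}^2$ after rescaling $\mathbb{M}^2_\Delta$ to unit curvature), and the exclusion criterion of Proposition \ref{basic principle}. Throughout I would take the dominating point $z$ on the minimal geodesic from $a$ to $x$, so that with $d(a,x)=\rho+t$ and $d(a,z)=u<\rho+t$ one has $d(x,z)=\rho+t-u$; the contrapositive of Proposition \ref{basic principle} then says that for $x\in Q_\mu$ and every such $z$ with $\rho+t+u<r_*$ (the range in which Proposition \ref{sphere worst} applies), $\min_{\bar B(a,\rho)}h_{x,z}\le\frac{1-\alpha}{\alpha}(\rho+t-u)$, while the left-hand side is bounded below by the model value of Proposition \ref{min h}. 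By Theorem \ref{general estimation} one may assume $\rho<d(a,x)$ (if $d(a,x)\le\rho$ there is nothing to prove: $(\alpha-1)^2\ge0$ puts $\bar B(a,\rho)$ inside each of the three target balls) and $t\le\rho/(2\alpha-1)$.

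For the sharp estimates — parts (ii), (iii) and the first assertion of (i) — I would use the limiting dominating point $z=z_s\in[a,x]$ with $d(x,z_s)=s\downarrow0$. Under the hypothesis $Q_\mu\subset\bar B(a,r_*/2)$ one has $2(\rho+t)\le r_*$, so $\rho+t+d(a,z_s)=2(\rho+t)-s<r_*$ for all $s>0$ and the comparison is legitimate; moreover, since $\rho+t>\rho$ (hence $\cot(\rho+t)<\cot\rho$, resp. $\coth(\rho+t)<\coth\rho$), $z_s$ lies in the second branch of Proposition \ref{min h}. Substituting the second-branch formula into the inequality above, applying $\cos$ (resp. $\cosh$) to both sides — legitimate because $\frac{1-\alpha}{\alpha}<1$ — then dividing by $s^2$ and letting $s\to0$, the factor $\frac12\bigl(1-\frac{(1-\alpha)^2}{\alpha^2}\bigr)=\frac{2\alpha-1}{2\alpha^2}$ surfaces and what remains is $S_\Delta(d(a,x))^2\le\frac{\alpha^2}{2\alpha-1}S_\Delta(\rho)^2$ — precisely the three balls of the statement. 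Lemma \ref{of sense} then shows the right-hand side lies in the range of $S_\Delta$, so these balls make sense and sit inside $B(a,r_*/2)$; this last point also proves the final sentence of the theorem.

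It then remains to secure $Q_\mu\subset\bar B(a,r_*/2)$ under (a) or (b), and to observe that it is automatic when $\Delta\le0$. Case (a) is immediate from Theorem \ref{general estimation}. Otherwise I would prove that $F_{\alpha,\rho,\Delta}(t)\ge0$ for every $x\in Q_\mu$; by Lemma \ref{F} this gives $t\le t_\Delta$, hence $d(a,x)\le\rho+t_\Delta$, and then under (b) the facts $\{F_{\alpha,\rho,\Delta}\ge0\}=(0,t_\Delta]$ and $F_{\alpha,\rho,\Delta}(r_*/2-\rho)\le0$ force $t_\Delta\le r_*/2-\rho$, while for $\Delta\le0$ a direct computation shows that $F_{\alpha,\rho,\Delta}(r_*/2-\rho)\le0$ is equivalent to Assumption \ref{assump}, so (b) holds automatically and (ii), (iii) drop out of the previous paragraph. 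To prove $F_{\alpha,\rho,\Delta}(t)\ge0$, let $z_0\in[a,x]$ have $d(a,z_0)=u_0$, the break point between the two branches of Proposition \ref{min h} (one checks $u_0<\rho<d(a,x)$). If $\rho+t+u_0<r_*$, the comparison applies at $z_0$; on the first branch the model minimum is the increasing affine function $t-\rho+u$, and a trigonometric simplification (analogous in the three geometries) identifies $t-\rho+u_0-\frac{1-\alpha}{\alpha}(\rho+t-u_0)$ as a positive multiple of $-F_{\alpha,\rho,\Delta}(t)$, so $x\in Q_\mu$ yields $F_{\alpha,\rho,\Delta}(t)\ge0$. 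If instead $\rho+t+u_0\ge r_*$, then $r_*\le d(a,x)+u_0<2d(a,x)$, so $d(a,x)>r_*/2$, and every admissible $z$ has $u<r_*-d(a,x)\le u_0$, hence sits in the first branch; pushing $u\uparrow r_*-d(a,x)$ in $t-\rho+u\le\frac{1-\alpha}{\alpha}(\rho+t-u)$ forces $d(a,x)\ge\frac{r_*-2\alpha\rho}{2(1-\alpha)}$, contradicting $d(a,x)\le\frac{2\alpha\rho}{2\alpha-1}<r_*$ — and this last strict inequality is exactly Assumption \ref{assump}. So the bad case never arises.

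I expect the main obstacle to be bookkeeping the legitimacy condition $d(a,x)+d(a,z)<r_*$ for Proposition \ref{sphere worst}: Assumption \ref{assump} controls only the radius $\frac{2\alpha\rho}{2\alpha-1}$ of $B_*$, not twice it, so one cannot slide $z$ toward $x$ before $Q_\mu\subset\bar B(a,r_*/2)$ is in hand. This is exactly why the third paragraph needs the two-case split (with Assumption \ref{assump} producing the contradiction in the bad case) and why the sharp estimate must go through the limit $z\to x$ rather than a single explicit choice of $z$. The remaining steps — the $\mathbb{S}^2$ and $\mathbb{H}^2$ trigonometric identities producing $\pm F_{\alpha,\rho,\Delta}$, and the $\Delta\le0$ equivalence of $F_{\alpha,\rho,\Delta}(r_*/2-\rho)\le0$ with Assumption \ref{assump} — are elementary but lengthy, much in the style of the computations already done for Lemma \ref{F} and Lemma \ref{of sense}.
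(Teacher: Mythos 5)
Your proposal is correct and follows essentially the same route as the paper: reduction to $\mathbb{M}^2_\Delta$ via Propositions \ref{basic principle} and \ref{sphere worst}, the limit $z\to x$ in the second branch of Proposition \ref{min h} to extract the sharp radius $S_\Delta(d(a,x))^2\le\frac{\alpha^2}{2\alpha-1}S_\Delta(\rho)^2$ (the paper performs the same limit $u\uparrow\rho+t$ after a chain of set inclusions), and the first branch together with Lemma \ref{F} to secure $Q_\mu\subset\bar B(a,r_*/2)$ under (a) or (b), with the $\Delta\le0$ cases handled by showing (b) is automatic under Assumption \ref{assump}. The one imprecision is your claim that for $\Delta<0$ the condition $F_{\alpha,\rho,\Delta}(r_*/2-\rho)\le0$ is \emph{equivalent} to Assumption \ref{assump} — it is only an implication, which the paper establishes by a monotonicity argument on $F_{\alpha,\rho,-1}$ — but that is the direction you actually use, so the argument stands.
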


\begin{proof}
Firstly, we consider the case when $\Delta>0$. Without loss of
generality, we can assume that $\Delta=1$. For every $x\in
B_*\setminus \bar{B}(a,\rho)$, let
$t_x=d(a,x)-\rho\in(0,\rho/(2\alpha-1)]$. By Propositions \ref{basic
principle} and \ref{sphere worst}, if there exists some $z$ on the
minimal geodesic joining $x$ and $a$ such that
$u_z=d(a,z)\in[0,\rho+t_x)$ verifies $\rho+t_x+u_z<r_*$ and
$\min_{\bar{B}(\bar{a},\rho)}\bar{h}_{\bar{x},\bar{z}}>(1-\alpha)(\rho+t_x-u_z)/\alpha$,
then $x\notin Q_{\mu}$.
Or equivalently,
\begin{align*}
&Q_{\mu}\cap\bar{B}(a,\rho)^c\\
\subset&\,\bigg\{x\in B_*\setminus
\bar{B}(a,\rho):\,\,t_x\in(0,\frac{\rho}{2\alpha-1}]\,\,\text{has
the property that for
every}\,\, u_z\in[0,\rho+t_x)\\
&\phantom{x\in B_*\setminus \bar{B}(a,\rho):\,\,+{}} \text{such
that}\,\,\rho+t_x+u_z<r_*,\,
\min\bar{h}_{\bar{x},\bar{z}}\leq\frac{1-\alpha}{\alpha}(\rho+t_x-u_z)\bigg\}:=A.
\end{align*} Since the restrictive condition of the set $A$ is
only on $t_x$, for simplicity and without ambiguity, by dropping the
subscripts of $t_x$ and $u_z$ we rewrite $A$ in the following form:
\begin{align*}
&\,\bigg\{t\in(0,\frac{\rho}{2\alpha-1}]:\,\,\text{for
every}\,\, u\in[0,\rho+t)\,\,\text{such that}\,\,\rho+t+u<
r_*,\\
&\phantom{t\in(0,\frac{\rho}{2\alpha-1}]:+{}}\min
\bar{h}_{\bar{x},\bar{z}}\leq\frac{1-\alpha}{\alpha}(\rho+t-u)\bigg\}\\
=&\,\,\bigg\{t\in(0,\frac{r_*}{2}-\rho]:\,\,\text{for every}\,\,
u\in[0,\rho+t)\,\,\text{such that}\,\,\rho+t+u<
r_*,\\
&\phantom{t\in(0,\frac{\rho}{2\alpha-1}]:\,\,+{}}\min
\bar{h}_{\bar{x},\bar{z}}\leq\frac{1-\alpha}{\alpha}(\rho+t-u)\bigg\}\\
&\cup\bigg\{t\in(\frac{r_*}{2}-\rho,\frac{\rho}{2\alpha-1}]:\,\,\text{for
every}\,\, u\in[0,\rho+t)\,\,\text{such that}\,\,\rho+t+u<
r_*,\\
&\phantom{\cup\bigg\{t\in(\frac{r_*}{2}-\rho,\frac{\rho}{2\alpha-1}]:+{}}\min
\bar{h}_{\bar{x},\bar{z}}\leq\frac{1-\alpha}{\alpha}(\rho+t-u)\bigg\}
:=\, B\cup C.
\end{align*}
Observe that for $t\in(0,r_*/2-\rho\,]$ and $u\in[0,\rho+t)$, we
always have $\rho+t+u<r_*$, hence by Proposition \ref{min h} and
Lemma \ref{h is positive},
\begin{align*}
B=&\,\bigg\{t\in(0,\frac{r_*}{2}-\rho]:\,\,\text{for every}\,\,
u\in[0,\rho+t),\,\,\min
\bar{h}_{\bar{x},\bar{z}}\leq\frac{1-\alpha}{\alpha}(\rho+t-u)\bigg\}\\
=&\,\bigg\{t\in(0,\frac{r_*}{2}-\rho]:\,\,\text{for every}\,\,
u\in[0,\rho+t)\,\,\text{such that}\,\,\cot u\geq
2\cot\rho-\cot(\rho+t),\\
&\phantom{t\in(0,\frac{r_*}{2}-\rho]:\,\,\,+{}}
t-\rho+u\leq\frac{1-\alpha}{\alpha}(\rho+t-u)\bigg\}\\
&\cap\bigg\{t\in(0,\frac{r_*}{2}-\rho]:\,\,\text{for every}\,\,
u\in[0,\rho+t)\,\,\text{such that}\,\,\cot u<
2\cot\rho-\cot(\rho+t),\\
&\phantom{\cap\bigg\{t\in(0,\frac{r_*}{2}-\rho]:+{}}
\cos(\rho+t-u)+\frac{\sin^2\rho\sin^2(\rho+t-u)}{2\sin
u\sin(\rho+t)}\geq\cos\big(\frac{1-\alpha}{\alpha}(\rho+t-u)\big)\bigg\}\\
=&\,\bigg\{t\in(0,\frac{r_*}{2}-\rho]:\,\,\text{for every}\,\,
u\in[0,\rho+t)\,\,\text{such that}\,\,\cot u\geq
2\cot\rho-\cot(\rho+t),\\
&\phantom{t\in(0,\frac{r_*}{2}-\rho]:\,\,\,+{}}
u\leq\rho-(2\alpha-1)t\bigg\}\\
&\cap\bigg\{t\in(0,\frac{r_*}{2}-\rho]:\,\,\text{for every}\,\,
u\in[0,\rho+t)\,\,\text{such that}\,\,\cot u<
2\cot\rho-\cot(\rho+t),\\
&\phantom{\cap\bigg\{t\in(0,\frac{r_*}{2}-\rho]:+{}}
\sin(\rho+t)\leq\frac{\sin^2\rho}{4\sin
u}\frac{\sin(\rho+t-u)}{\sin\cfrac{\rho+t-u}{2\alpha}}\frac{\sin(\rho+t-u)}{\sin\big(\cfrac{2\alpha-1}{2\alpha}(\rho+t-u)\big)}\bigg\}\\
=&\,\bigg\{t\in(0,\frac{r_*}{2}-\rho]:\,\,\cot(\rho-(2\alpha-1)t)\leq
2\cot\rho-\cot(\rho+t)\bigg\}\\
\end{align*}
\begin{align}\label{A sense}
&\cap\bigg\{t\in(0,\frac{r_*}{2}-\rho]:\,\,\text{for every}\,\,
u\in[0,\rho+t)\,\,\text{such that}\,\,\cot u<
2\cot\rho-\cot(\rho+t),\notag\\
&\phantom{\cap\bigg\{t\in(0,\frac{r_*}{2}-\rho]:+{}}
\sin(\rho+t)\leq\frac{\sin^2\rho}{4\sin
u}\frac{\sin(\rho+t-u)}{\sin\cfrac{\rho+t-u}{2\alpha}}\frac{\sin(\rho+t-u)}{\sin\big(\cfrac{2\alpha-1}{2\alpha}(\rho+t-u)\big)}\bigg\}\notag\\
\subset&\bigg\{t\in(0,\frac{r_*}{2}-\rho]:\,\,\text{for every}\,\,
u\in(\rho-(2\alpha-1)t,\rho+t),\notag\\
&\phantom{t\in(0,\frac{r_*}{2}-\rho]:\,\,+{}}
\sin(\rho+t)\leq\frac{\sin^2\rho}{4\sin u}\frac{\sin(\rho+t-u)}{\sin\cfrac{\rho+t-u}{2\alpha}}\frac{\sin(\rho+t-u)}{\sin\big(\cfrac{2\alpha-1}{2\alpha}(\rho+t-u)\big)}\bigg\}\notag\\
\subset&\bigg\{t\in(0,\frac{r_*}{2}-\rho]:\,\,\sin(\rho+t)\leq\frac{\sin^2\rho}{4\sin(\rho+t)}\cdot2\alpha\cdot\frac{2\alpha}{2\alpha-1}\bigg\}\notag\\
=&\,\bigg\{t\in(0,\frac{r_*}{2}-\rho]:\,\,\rho+t\leq\arcsin\big(\frac{\alpha\sin\rho}{\sqrt{2\alpha-1}}\big)\bigg\}.
\end{align}
Hence if $Q_{\mu}\subset \bar{B}(a,r_*/2)$, then $C=\phi$ and
\eqref{A sense} says that
$$Q_{\mu}\subset\bar{B}\bigg(a,\arcsin\big(\frac{\alpha\sin\rho}{\sqrt{2\alpha-1}}\big)\bigg),$$
this completes the proof of the first assertion of $i)$. To show the
second one, observe that if $a)$ holds, then Theorem \ref{general
estimation} implies the desired result. Hence assume that $b)$
holds. By Proposition \ref{min h} one has
\begin{align*}
C\subset&\bigg\{t\in(\frac{r_*}{2}-\rho,\frac{\rho}{2\alpha-1}]:\,\,\text{for
every}\,\, u\in[0,r_*-(\rho+t))\,\,\text{such that}\notag\\
&\phantom{t\in(\frac{r_*}{2}-\rho,\frac{\rho}{2\alpha-1}]:\,\,+{}}\cot
u\geq 2\cot\rho-\cot(\rho+t),\notag\\
&\phantom{t\in(\frac{r_*}{2}-\rho,\frac{\rho}{2\alpha-1}]:\,\,+{}}
t-\rho+u\leq\frac{1-\alpha}{\alpha}(\rho+t-u)\bigg\}\notag\\
=&\bigg\{t\in(\frac{r_*}{2}-\rho,\frac{\rho}{2\alpha-1}]:\,\,\cot(\rho-(2\alpha-1)t)\leq
2\cot\rho-\cot(\rho+t)\bigg\}.
\end{align*}
Observe that for $t\in(r_*/2-\rho,\,\rho/(2\alpha-1))$ we have
\begin{align*}
&\cot(\rho-(2\alpha-1)t)\leq 2\cot\rho-\cot(\rho+t)\\
\Longleftrightarrow\,&\cot(\rho-(2\alpha-1)t)-\cot\rho\leq \cot\rho-\cot(\rho+t)\\
\Longleftrightarrow\,&\frac{\sin(\rho-(2\alpha-1)t)}{\sin((2\alpha-1)t)}\geq\frac{\sin(\rho+t)}{\sin
t}\\
\Longleftrightarrow\,&\cot((2\alpha-1)t)-\cot\rho\geq\cot\rho+\cot t\\
\Longleftrightarrow\,&F_{\alpha,\rho,1}(t)\geq0.
\end{align*}
Hence
\begin{align*}
C\subset\bigg\{t\in(\frac{r_*}{2}-\rho,\frac{\rho}{2\alpha-1}]:\,\,F_{\alpha,\rho,1}(t)\geq0\bigg\}:=D.
\end{align*}
If $\alpha=1$, clearly $D=\phi$. Now let $1/2<\alpha<1$, then Lemma
\ref{F} yields that
\begin{align*}
D=(\frac{r_*}{2}-\rho,\frac{\rho}{2\alpha-1}]\cap(0,t_1]=\phi.
\end{align*}
Thus $C=\phi$ still holds, that is, $Q_{\mu}\subset
\bar{B}(a,r_*/2)$. The proof of $i)$ is complete.

Now let us turn to the proof of $ii)$ and $iii)$. In fact, the proof
for these two cases are essentially the same as that of $i)$ except
to note that we no longer need to assume that $Q_{\mu}\subset
\bar{B}(a,r_*/2)$, because this is implied by Assumption
\ref{assump}. To see this, if $4\alpha\rho\leq(2\alpha-1)r_*$, then
it suffices to use Theorem \ref{general estimation}. So that let us
assume $4\alpha\rho>(2\alpha-1)r_*$ and  show that
$F_{\alpha,\rho,\Delta}(r_*/2-\rho)\leq0$ for $\Delta\in\{-1,0\}$.
This is trivial if $\Delta=0$ or $\alpha=1$, hence let $\Delta=-1$
and $\alpha\in(1/2,1)$. Since
$r_*/2-\rho>(1-\alpha)\rho/(2\alpha-1)$ and $F_{\alpha,\rho,-1}$ is
strictly decreasing, it suffices to show that
$F_{\alpha,\rho,-1}((1-\alpha)\rho/(2\alpha-1))\leq0$. To this end,
define $f(\alpha)=F_{\alpha,\rho,-1}((1-\alpha)\rho/(2\alpha-1))$,
easy computation gives that
$$f^{\prime}(\alpha)=\frac{\rho}{\sinh^2((1-\alpha)\rho)}
-\frac{\rho}{(2\alpha-1)^2\sinh^2(\frac{1-\alpha}{2\alpha-1}\rho)}>0,$$
because the function $x\longmapsto\sinh x/x$ is strictly increasing.
Hence $f(\alpha)< f(1-)=2(\rho^{-1}-\coth\rho)<0$. The proof is
complete.
\end{proof}


\begin{remark}
When $\Delta>0$, Assumption \ref{assump} does not imply the
condition $b)$ in $i)$. In fact, in the case when $M=\mathbb{S}^2$,
we have $r_*=\pi$ and $\Delta=1$. Then let $\alpha=0.51$ and
$\rho=0.99\pi(1-(2\alpha)^{-1})$, then
$2\alpha\rho/(2\alpha-1)\in(\pi/2+1.5393,\pi-0.0314)$, but
$F_{\alpha,\rho,1}(\pi/2-\rho)\thickapprox0.2907>0$.
\end{remark}


\begin{remark}
It is easily seen that if we replace $r_*$ by any $r\in(0,r_*]$ in
Assumption \ref{assump}, then Lemma \ref{of sense} still holds when
$r_*$ is replaced by $r$. This observation can be used to reinforce
the conclusions of Theorem \ref{position}. For example, in the case
when $\Delta>0$,
$$\frac{2\alpha\rho}{2\alpha-1}\leq\frac{r_*}{2}\,\,\text{implies that}\,\, Q_{\mu}\subset\bar{B}\bigg(a,\frac{1}{\sqrt{\Delta}}\arcsin\big(\frac{\alpha\sin(\sqrt{\Delta}\rho)}{\sqrt{2\alpha-1}}\big)\bigg)\subset
B\bigg(a,\frac{r_*}{4}\bigg).$$
\end{remark}

\begin{remark}
Although we have chosen the framework of this section to be a
Riemannian manifold, the essential tool that has been used is the
hinge version of the triangle comparison theorem. Consequently, all
the results in this section remain true if $M$ is a CAT$(\Delta)$
space (see \cite[Chapter 2]{Bridson}) and $r_*$ is replaced by
$\pi/\sqrt{\Delta}$ in Assumption \ref{assump}.
\end{remark}

\begin{remark}
For the case when $\alpha=1$, Assumption \ref{assump} becomes
$$\rho<\frac{1}{2}\min\{\frac{\pi}{\sqrt{\Delta}}\,\,,\inj\,\}.$$
Observe that in this case, when $\Delta>0$, the condition
$F_{1,\rho,\Delta}(r_*/2-\rho)\leq0$ is trivially true in case of
need. Hence Theorem \ref{position} yields that
$Q_{\mu}\subset\bar{B}(a,\rho)$, which is exactly what the Theorem
2.1 in \cite{Afsari} says for medians.
\end{remark}

\section{uniqueness of fr\'{e}chet sample medians in compact\\ riemannian manifolds }

In this section, we shall always assume that $M$ is a complete
Riemannian manifold of dimension $l\geq2$. The Riemannian metric and
the Riemannian distance are denoted by
$\langle\,\cdot\,,\cdot\,\rangle$ and $d$, respectively. For each
point $x\in M$, $S_x$ denotes the unit sphere in $T_xM$. Moreover,
for a tangent vector $v\in S_x$, the distance between $x$ and its
cut point along the geodesic starting from $x$ with velocity $v$ is
denoted
by $\tau(v)$. Certainly, if there is no cut point along this geodesic, then we define $\tau(v)=+\infty$.

For every point $(x_1,\dots,x_N)\in M^N$, where $N\geq3$ is a fixed
natural number, we write
$$\mu(x_1,\dots,x_N)=\frac{1}{N}\sum_{k=1}^{N}\delta_{x_k}.$$
The set of all the Fr\'{e}chet medians of $\mu(x_1,\dots,x_N)$ is
denoted
by $Q(x_1,\dots,x_N)$.\\

We begin with the basic observation that if one data point is moved
towards a median along some minimizing geodesic for a little
distance, then the median remains unchanged.

\begin{proposition}\label{move towards median}
Let $(x_1,\dots,x_N)\in M^N$ and $m\in Q(x_1,\dots,x_N)$. Fix a
normal geodesic $\gamma:[0,+\infty)\rightarrow M$ such that
$\gamma(0)=x_1$, $\gamma(d(x_1,m))=m$. Then for every
$t\in[0,d(x_1,m)]$ we have 
\[
Q(\gamma(t),x_2,\dots,x_N)=
\begin{cases}
Q(x_1,\dots,x_N)\cap\gamma[t,\tau(\dot{\gamma}(0))],& \text{if $\tau(\dot{\gamma}(0))<+\infty$;}\\
Q(x_1,\dots,x_N)\cap\gamma_1[t,+\infty),& \text{if
$\tau(\dot{\gamma}(0))=+\infty$.}
\end{cases}
\]
Particularly, $m\in Q(\gamma(t),x_2,\dots,x_N)$.
\end{proposition}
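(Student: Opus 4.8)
The plan is to compare the objective functions $f_{\mu(\gamma(t),x_2,\dots,x_N)}$ and $f_{\mu(x_1,\dots,x_N)}$ directly, exploiting the fact that $t\mapsto d(\gamma(t),q)$ behaves very simply along the geodesic $\gamma$ when $q=m$, namely $d(\gamma(t),m)=d(x_1,m)-t$ for $t\in[0,d(x_1,m)]$, since $\gamma$ restricted to $[0,d(x_1,m)]$ is a minimizing geodesic from $x_1$ to $m$. First I would write, for an arbitrary point $q\in M$,
\[
f_{\mu(\gamma(t),x_2,\dots,x_N)}(q)-f_{\mu(x_1,\dots,x_N)}(q)=\frac{1}{N}\bigl(d(\gamma(t),q)-d(x_1,q)\bigr),
\]
and observe via the triangle inequality that the right-hand side is at least $-\tfrac{1}{N}d(x_1,\gamma(t))=-\tfrac{t}{N}$, with equality precisely when $\gamma(t)$ lies on a minimizing geodesic from $x_1$ to $q$ of which $\gamma|_{[0,t]}$ is the initial segment. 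In particular, evaluating at $q=m$ gives equality, so that
\[
f_{\mu(\gamma(t),x_2,\dots,x_N)}(m)=f_{\mu(x_1,\dots,x_N)}(m)-\frac{t}{N}=f^*_{\mu(x_1,\dots,x_N)}-\frac{t}{N}.
\]
Combining these two facts shows that for every $q$, $f_{\mu(\gamma(t),x_2,\dots,x_N)}(q)\geq f_{\mu(x_1,\dots,x_N)}(q)-\tfrac{t}{N}\geq f^*_{\mu(x_1,\dots,x_N)}-\tfrac{t}{N}=f_{\mu(\gamma(t),x_2,\dots,x_N)}(m)$, hence $m$ is a median of the new configuration and the new minimum value is $f^*_{\mu(x_1,\dots,x_N)}-t/N$; moreover $q$ is a new median if and only if both inequalities above are equalities, i.e. $q\in Q(x_1,\dots,x_N)$ \emph{and} $\gamma(t)$ lies on an extension of $\gamma|_{[0,t]}$ that minimizes up to $q$.

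The remaining work is to translate this last "lies on an extension of $\gamma$" condition into the stated intersection with the geodesic ray. Here I would argue as follows. If $q\in Q(x_1,\dots,x_N)$ then, by the chain of inequalities at $t=0$, $x_1$ lies on a minimizing geodesic from $x_1$ to $q$ — which is vacuous — so I instead use the conclusion at the relevant $t$: equality $d(\gamma(t),q)=d(x_1,q)-t$ together with $q\in Q(x_1,\dots,x_N)$. Since $m\in Q(x_1,\dots,x_N)$ and $\gamma|_{[0,d(x_1,m)]}$ minimizes to $m$, the set $Q(x_1,\dots,x_N)$ already contains the whole subarc $\gamma[t,d(x_1,m)]$ by applying the displayed inequalities to those points; more generally one shows that a point $q$ of $Q(x_1,\dots,x_N)$ satisfies the equality $d(\gamma(t),q)=d(x_1,q)-t$ iff $q$ lies on the geodesic $\gamma$ beyond $\gamma(t)$ and before the cut point $\gamma(\tau(\dot\gamma(0)))$ — because beyond the cut time $\gamma$ is no longer minimizing from $x_1$, so the equality $d(x_1,\gamma(s))=s$ fails and $\gamma(s)$ cannot realize equality. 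This gives exactly the two cases in the statement, with the ray truncated at $\tau(\dot\gamma(0))$ when finite and running to $+\infty$ otherwise.

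The main obstacle I anticipate is the careful handling of this "extension" clause: one must show both inclusions, and in particular that \emph{no} median $q$ of the new configuration can lie off the geodesic $\gamma$. The clean way is the equality-discussion in the triangle inequality $d(\gamma(t),q)\geq d(x_1,q)-d(x_1,\gamma(t))$: equality forces $x_1$, $\gamma(t)$, $q$ to be aligned on a single minimizing geodesic; since $\gamma|_{[0,t]}$ is already fixed as the initial segment (its velocity is $\dot\gamma(0)$), alignment forces $q=\gamma(s)$ for some $s\geq t$ with $\gamma|_{[0,s]}$ minimizing, i.e. $s\leq\tau(\dot\gamma(0))$. One should also note the edge cases $t=0$ (trivial) and $q=m$ (already checked), and invoke completeness of $M$ to guarantee the ray $\gamma$ is defined on all of $[0,+\infty)$ and that cut points, when they exist, are attained. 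None of these steps requires the curvature or concentration hypotheses of the previous section; the argument is purely metric and rests only on properties of minimizing geodesics and cut loci, so it would also go through in any complete length space with the appropriate notion of cut point.
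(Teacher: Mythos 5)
Your proposal is correct and follows essentially the same route as the paper: both decompose $f_{\mu(\gamma(t),x_2,\dots,x_N)}(q)-f_{\mu(\gamma(t),x_2,\dots,x_N)}(m)$ into the two nonnegative quantities $f_{\mu}(q)-f^*_{\mu}$ and $\frac{1}{N}\bigl(d(q,\gamma(t))+t-d(q,x_1)\bigr)$, characterize the new medians by equality in both, and then identify the equality set of the triangle inequality with the subarc $\gamma[t,\tau(\dot{\gamma}(0))]$ via smoothness of minimizing concatenations. Your elaboration of that last identification is in fact more detailed than the paper's one-line appeal to the definition of $\tau(\dot{\gamma}(0))$.
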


\begin{proof}
For simplicity, let $\mu=\mu(x_1,\dots,x_N)$ and
$\mu_t=\mu(\gamma(t),x_2,\dots,x_N)$. Then for every $x\in M$,
\begin{align*}
\,f_{\mu_t}(x)-f_{\mu_t}(m)
=\,&\bigg(f_{\mu}(x)-\frac{1}{N}d(x,x_1)+\frac{1}{N}d(x,\gamma(t))\bigg)\\
&-\bigg(f_{\mu}(m)-\frac{1}{N}d(m,x_1)+\frac{1}{N}d(m,\gamma(t))\bigg)\\
=\,&\bigg(f_{\mu}(x)-f_{\mu}(m)\bigg)+\bigg(d(x,\gamma(t))+t-d(x,x_1)\bigg)\geq0.
\end{align*}
So that $m\in Q_{\mu_t}$. Combine this with the fact that $m$ is a
median of $\mu$, it is easily seen from the above proof that
$$Q_{\mu_t}= Q_{\mu}\cap\{x\in
M: d(x,\gamma(t))+t=d(x,x_1)\}.$$ Now the conclusion follows from
the definition of $\tau(\dot{\gamma}(0))$.
\end{proof}

The following theorem states that in order to get the uniqueness of
Fr\'{e}chet medians, it suffices to move two data points towards a
common median along some minimizing geodesics for a little distance.

\begin{theorem}
Let $(x_1,\dots,x_N)\in M^N$ and $m\in Q(x_1,\dots,x_N)$. Fix two
normal geodesics $\gamma_1,\gamma_2:[0,+\infty)\rightarrow M$ such
that $\gamma_1(0)=x_1$, $\gamma_1(d(x_1,m))=m$, $\gamma_2(0)=x_2$
and $\gamma_2(d(x_2,m))=m$. Assume that
\[
x_2\notin
\begin{cases}
\gamma_1[0,\tau(\dot{\gamma}_1(0))],& \text{if $\tau(\dot{\gamma}_1(0))<+\infty$;}\\
\gamma_1[0,+\infty),& \text{if $\tau(\dot{\gamma}_1(0))=+\infty$.}
\end{cases}
\]
Then for every $t\in(0,d(x_1,m)]$ and $s\in(0,d(x_2,m)]$ we have
$$Q(\gamma_1(t),\gamma_2(s),x_3,\dots,x_N)=\{m\}.$$
\end{theorem}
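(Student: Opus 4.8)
The plan is to apply Proposition~\ref{move towards median} twice, once for each geodesic, and then use the characterization of the median set furnished there to show that the intersection which describes $Q(\gamma_1(t),\gamma_2(s),x_3,\dots,x_N)$ collapses to the single point $m$. First I would move only the first point: by Proposition~\ref{move towards median} applied to $\gamma_1$, for every $t\in(0,d(x_1,m)]$ we have $m\in Q(\gamma_1(t),x_2,\dots,x_N)$ and, more precisely,
\[
Q(\gamma_1(t),x_2,\dots,x_N)=Q(x_1,\dots,x_N)\cap\{x\in M:\,d(x,\gamma_1(t))+t=d(x,x_1)\}.
\]
The set $\{x:\,d(x,\gamma_1(t))+t=d(x,x_1)\}$ is exactly the set of points $x$ for which the geodesic $\gamma_1$ restricted to $[0,t]$ is (part of) a minimizing geodesic from $x_1$ to $x$ that passes through $\gamma_1(t)$; equivalently, it is the portion of the ray $\gamma_1$ beyond $\gamma_1(t)$, truncated at the cut point $\tau(\dot\gamma_1(0))$. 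This is precisely the description recorded in Proposition~\ref{move towards median}.

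Next I would move the second point. Since $m\in Q(\gamma_1(t),x_2,\dots,x_N)$, I apply Proposition~\ref{move towards median} again, this time to the configuration $(\gamma_1(t),x_2,x_3,\dots,x_N)$, the median $m$, and the geodesic $\gamma_2$ from $x_2$ to $m$. This yields, for every $s\in(0,d(x_2,m)]$,
\[
Q(\gamma_1(t),\gamma_2(s),x_3,\dots,x_N)=Q(\gamma_1(t),x_2,\dots,x_N)\cap\Gamma_2,
\]
where $\Gamma_2$ denotes the segment $\gamma_2[s,\tau(\dot\gamma_2(0))]$ (or $\gamma_2[s,+\infty)$ if there is no cut point). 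Combining the two displays, the median set in question equals
\[
Q(x_1,\dots,x_N)\cap\Gamma_1\cap\Gamma_2,
\]
where $\Gamma_1=\gamma_1[t,\tau(\dot\gamma_1(0))]$ (resp.\ $\gamma_1[t,+\infty)$). Now $m$ lies in all three sets, so the intersection is nonempty. It remains to see it is a single point. Both $\Gamma_1$ and $\Gamma_2$ are subarcs of minimizing geodesics ending at $m$ (with $m$ an interior or endpoint of each), and $m\in\Gamma_1\cap\Gamma_2$. If the intersection contained a second point $m'\neq m$, then $\Gamma_1$ and $\Gamma_2$ would share the nondegenerate geodesic subarc from $m'$ to $m$, forcing the two geodesics $\gamma_1$ and $\gamma_2$ to coincide on a neighborhood of $m$ and hence (being geodesics) to have the same image near $m$; propagating this along the minimizing segments and using that $\gamma_1$ reaches $x_1$ and $\gamma_2$ reaches $x_2$, one concludes $x_2\in\gamma_1[0,\tau(\dot\gamma_1(0))]$ (resp.\ $x_2\in\gamma_1[0,+\infty)$), contradicting the hypothesis. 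Therefore $Q(\gamma_1(t),\gamma_2(s),x_3,\dots,x_N)=\{m\}$.

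The main obstacle is the last step: turning ``$\Gamma_1$ and $\Gamma_2$ meet in more than one point'' into the forbidden conclusion $x_2\in\gamma_1[0,\tau(\dot\gamma_1(0))]$. One must argue carefully that a geodesic segment shared by $\gamma_1|_{[t,\cdot]}$ and $\gamma_2|_{[s,\cdot]}$ forces the full image of $\gamma_2$ up to $m$ to lie inside the image of $\gamma_1$ up to its cut point — this uses uniqueness of geodesic extension together with the fact that each of these subarcs is minimizing (so no cut point is crossed before $m$), which is exactly where the cut-locus bookkeeping in Proposition~\ref{move towards median} and the case distinction on $\tau(\dot\gamma_1(0))$ are needed. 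Everything else is a direct, essentially formal, double application of Proposition~\ref{move towards median}.
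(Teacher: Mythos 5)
Your proposal is structurally identical to the paper's proof: two successive applications of Proposition \ref{move towards median} reduce everything to showing that $\gamma_1[t,\tau(\dot{\gamma}_1(0))]\cap\gamma_2[s,\tau(\dot{\gamma}_2(0))]=\{m\}$, and the paper dispatches that last step in one line by appealing to the hypothesis on $x_2$ and ``the definition of cut point.'' So you have located exactly the right reduction and exactly the right remaining obstacle.

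However, the implication you sketch for that obstacle --- a shared nondegenerate subarc of the two segments forces $x_2\in\gamma_1[0,\tau(\dot{\gamma}_1(0))]$ --- does not hold as stated, and the paper's one-line version is open to the same objection. The hypothesis excludes $x_2$ only from the \emph{forward} segment of $\gamma_1$; it does not exclude the configuration in which $x_1$ lies strictly between $x_2$ and $m$ on a common minimizing geodesic, so that $x_2$ sits on the backward prolongation of $\gamma_1$. Concretely, in $M=\mathbf{R}^2$ take $x_1=(-1,0)$, $x_2=(-2,0)$, $m=(0,0)$, and $x_3=(1,0)$, $x_4=(1/2,\sqrt{3}/2)$, $x_5=(1/2,-\sqrt{3}/2)$, so that the unit vectors from $m$ to the data points sum to zero and $m$ is a Fr\'{e}chet median. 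Then $\gamma_1(u)=(u-1,0)$, $\gamma_2(u)=(u-2,0)$, the hypothesis $x_2\notin\gamma_1[0,+\infty)$ holds, yet $\gamma_1[t,+\infty)\cap\gamma_2[s,+\infty)$ is an entire ray, not $\{m\}$. (The theorem's conclusion survives in this example, but only because $Q(x_1,\dots,x_5)=\{m\}$ already; the geometric intersection alone does not do the work.) To close the gap one must either treat the backward-prolongation case separately --- using the factor $Q(x_1,\dots,x_N)$ retained in the intersection, e.g.\ via the first-order condition of Proposition \ref{two cases} along the common geodesic --- or read the hypothesis as excluding $x_2$ from the whole maximal geodesic through $x_1$ and $m$. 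A smaller point in the same step: from $m,m'\in\Gamma_1\cap\Gamma_2$ you immediately infer a \emph{shared} arc, but a priori $\Gamma_1$ and $\Gamma_2$ could contain two distinct minimizing geodesics between $m'$ and $m$; this subcase has to be ruled out (it would make each of $m,m'$ a cut point of the other, which is incompatible with both lying in the interior of the minimizing segments), and that also needs saying.
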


\begin{proof}
Without loss of generality, we may assume that both
$\tau(\dot{\gamma}_1(0))$ and $\tau(\dot{\gamma}_2(0))$ are finite.
Applying Proposition \ref{move towards median} two times we get
$$Q(\gamma_1(t),\gamma_2(s),x_3,\dots,x_N)\subset Q(x_1,\dots,x_N)
\cap\gamma_1[t,\tau(\dot{\gamma}_1(0))]\cap\gamma_2[s,\tau(\dot{\gamma}_2(0))].$$
Since $x_2\notin \gamma_1[0,\tau(\dot{\gamma}_1(0))]$, the
definition of cut point yields
$\gamma_1[t,\tau(\dot{\gamma}_1(0))]\cap\gamma_2[s,\tau(\dot{\gamma}_2(0))]=\{m\}$.
The proof is complete.
\end{proof}

We need the following necessary conditions of Fr\'{e}chet medians.

\begin{proposition}\label{two cases}
Let $(x_1,\dots,x_N)\in M^N$ and $m\in Q(x_1,\dots,x_N)$. For every
$k=1,\dots,N$ let $\gamma_k:[0,d(m,x_k)]\rightarrow M$ be a
normal geodesic such that $\gamma_k(0)=m$ and $\gamma_k(d(m,x_k))=x_k$.\\
i) If $m$ does not coincide with any $x_k$, then
\begin{equation}\label{no data}
\sum_{k=1}^{N}\dot{\gamma}_k(0)=0.
\end{equation}
In this case, the minimizing geodesics $\gamma_1,\dots,\gamma_N$
are uniquely determined.\\
ii) If $m$ coincides with some $x_{k_0}$, then
\begin{equation}\label{data}
\bigg|\sum_{x_k\neq
x_{k_0}}\dot{\gamma}_k(0)\bigg|\leq\sum_{x_k=x_{k_0}}1.
\end{equation}
\end{proposition}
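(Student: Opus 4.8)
The plan is to treat this as a first-order optimality condition for the convex-type function $f_\mu$ where $\mu = \mu(x_1,\dots,x_N)$, by analyzing the (one-sided) directional derivatives of $f_\mu$ at the minimum point $m$. Recall that $f_\mu(x) = \frac{1}{N}\sum_{k=1}^N d(x,x_k)$, so the behavior of $f_\mu$ near $m$ is governed by the behavior of each distance function $d(\cdot,x_k)$ near $m$. The key analytic fact I would invoke is that if $m\neq x_k$, then $d(\cdot,x_k)$ is differentiable at $m$ with gradient $-\dot\gamma_k(0)$ (the unit vector at $m$ pointing away from $x_k$ along the minimizing geodesic), and moreover when $m\neq x_k$ this minimizing geodesic is unique because otherwise $m$ would be a cut point of $x_k$ and $d(\cdot,x_k)$ would fail to be smooth — but smoothness is forced since $f_\mu$ attains an interior minimum at $m$ and all the \emph{other} summands are at worst Lipschitz, so a genuine kink of $d(\cdot,x_k)$ at $m$ pointing "inward" would let us decrease $f_\mu$. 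On the other hand, if $m = x_{k_0}$, then $d(\cdot,x_{k_0})(x) = d(x,m)$, whose directional derivative at $m$ in a unit direction $v\in S_m$ is exactly $+1$.

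For part (i), assuming $m$ does not coincide with any $x_k$: I would compute, for an arbitrary unit vector $v\in S_m$, the directional derivative
\[
D_v f_\mu(m) = \frac{1}{N}\sum_{k=1}^N \big\langle v, -\dot\gamma_k(0)\big\rangle = -\frac{1}{N}\Big\langle v, \sum_{k=1}^N \dot\gamma_k(0)\Big\rangle.
\]
Since $m$ is a global minimum of $f_\mu$ and lies in the interior (it is just a point of $M$, and $f_\mu$ is differentiable there as argued above), this directional derivative must be $\geq 0$ for every $v$; applying this to $v$ and to $-v$ forces $\langle v,\sum_k\dot\gamma_k(0)\rangle = 0$ for all $v\in S_m$, hence $\sum_{k=1}^N\dot\gamma_k(0) = 0$, which is \eqref{no data}. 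The uniqueness of each $\gamma_k$ follows from the differentiability argument: if some $x_k$ admitted two distinct minimizing geodesics to $m$, then $m$ is in the cut locus of $x_k$, and $d(\cdot,x_k)$ is not differentiable at $m$ — one can then pick a direction $v$ along which the one-sided derivative of $d(\cdot,x_k)$ is strictly less than $-\langle v,\dot\gamma_k(0)\rangle$ for some choice of geodesic, contradicting that $m$ minimizes the sum (all other terms being differentiable and contributing a linear term). I should double-check this last point — the precise statement I want is that at a non-differentiability point of $d(\cdot,x_k)$ the sum of the one-sided derivatives in the $v$ and $-v$ directions is strictly negative for a suitable $v$, which combined with differentiability of the remaining $N-1$ summands contradicts minimality.

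For part (ii), suppose $m = x_{k_0}$ (possibly $m$ equals several of the $x_k$). Split the index set into $I = \{k : x_k = x_{k_0}\}$ and $J = \{k : x_k \neq x_{k_0}\}$; write $|I| = \#I = \sum_{x_k = x_{k_0}} 1$. For $k\in J$, $d(\cdot,x_k)$ is again differentiable at $m$ with $D_v\, d(\cdot,x_k)(m) = \langle v, -\dot\gamma_k(0)\rangle$ (here the minimizing geodesic need not be unique, but pick any one as in the statement). For $k\in I$, $D_v\, d(\cdot, x_k)(m) = D_v\, d(\cdot,m)(m) = 1$. Hence for any unit $v\in S_m$,
\[
0 \le N\, D_v f_\mu(m) = |I| - \Big\langle v, \sum_{k\in J}\dot\gamma_k(0)\Big\rangle.
\]
If $\sum_{k\in J}\dot\gamma_k(0) \neq 0$, choose $v = \sum_{k\in J}\dot\gamma_k(0) \big/ \big|\sum_{k\in J}\dot\gamma_k(0)\big|$; then the inequality reads $0 \le |I| - \big|\sum_{k\in J}\dot\gamma_k(0)\big|$, which is exactly \eqref{data} (and if the sum is zero the inequality is trivial). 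The main obstacle throughout is the careful justification of differentiability of $d(\cdot,x_k)$ at $m$ when $m\neq x_k$ and the corresponding uniqueness of the minimizing geodesic in case (i): this rests on the standard first variation formula together with the observation that a global minimum of $f_\mu$ cannot sit at a point where any individual distance summand has a "concave corner," but it deserves to be spelled out rather than asserted.
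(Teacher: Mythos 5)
Your route is genuinely different from the paper's. The paper does not touch directional derivatives of $f_{\mu}$ at all: it applies Proposition \ref{move towards median} $N$ times to slide each $x_k$ along $\gamma_k$ to $\gamma_k(\varepsilon)$, so that $m$ remains a median of a measure supported in an arbitrarily small ball around $m$, where the already--established local characterization of Riemannian medians (\cite[Theorem 2.2]{Yang}) gives \eqref{no data} and \eqref{data} directly; since this works for \emph{any} choice of minimizing geodesics, uniqueness in case i) is then a one-line consequence of \eqref{no data} itself (a second geodesic $\zeta_1$ would satisfy $\dot{\zeta}_1(0)+\sum_{k\geq2}\dot{\gamma}_k(0)=0$, forcing $\dot{\zeta}_1(0)=\dot{\gamma}_1(0)$). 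Your first-variation argument is self-contained and avoids the external reference, which is a real advantage, but it must confront the non-smoothness of $d(\cdot,x_k)$ at cut points head-on, which the paper's ``move the data towards the median'' trick sidesteps entirely.

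The one step that, as written, does not go through is your justification of differentiability: you rule out a kink of $d(\cdot,x_1)$ at $m$ by invoking ``differentiability of the remaining $N-1$ summands,'' but that is exactly what is in question --- $m$ could a priori lie in the cut locus of several $x_k$ simultaneously. The clean repair is to drop differentiability altogether: for any minimizing geodesic $\gamma_k$ from $m$ to $x_k$ and small $s>0$, the function $y\mapsto d(y,\gamma_k(s))+d(m,x_k)-s$ is a smooth upper support function for $d(\cdot,x_k)$ touching it at $m$, whence the one-sided derivative satisfies $D_v^{+}d(\cdot,x_k)(m)\leq\langle v,-\dot{\gamma}_k(0)\rangle$ for \emph{every} choice of $\gamma_k$ (and every summand contributes at most $0$ to $D_v^{+}+D_{-v}^{+}$, by the same semiconcavity). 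Summing these bounds and using $D_{\pm v}^{+}f_{\mu}(m)\geq0$ yields $\langle v,\sum_k\dot{\gamma}_k(0)\rangle=0$ for all $v$, i.e.\ \eqref{no data} for any choice of geodesics, after which uniqueness follows as in the paper. This reordering (first-order identity first, uniqueness second) both closes the gap and shortens the proof; the same one-sided upper bound is all that is needed in your part ii), where ``pick any one'' is then legitimate.
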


\begin{proof}
For sufficiently small $\varepsilon>0$, Proposition \ref{move
towards median} yields that $m$ is a median of
$\mu(\gamma_1(\varepsilon),\dots,\gamma_N(\varepsilon))$. Hence
\cite[Theorem 2.2]{Yang} gives \eqref{no data} and \eqref{data}. Now
assume that $m$ does not coincide with any $x_k$ and, without loss
of generality, there is another normal geodesic
$\zeta_1:[0,d(m,x_1)]\rightarrow M$ such that $\zeta_1(0)=m$ and
$\zeta_1(d(m,x_1))=x_1$. Then \eqref{no data} yields
$\dot{\zeta}_1(0)+\sum_{k=2}^{N}\dot{\gamma}_k(0)=0.$ So that
$\dot{\zeta}_1(0)=\dot{\gamma}_1(0)$, that is to say,
$\zeta_1=\gamma_1$. The proof is complete.
\end{proof}

From now on, we will only consider the case when $M$ is a compact
Riemannian manifold. As a result, let the following assumption hold
in the rest part of this section:
\begin{Assumption}
$M$ is a compact Riemannian manifold with diameter $L$.
\end{Assumption}
In what follows, all the measure-theoretic statements should be
understood to be with respect to the canonical Lebesgue measure of
the underlying manifold. Let $\lambda_{M}$ denote the canonical
Lebesgue measure of $M$.

The following lemma gives a simple observation on dimension.

\begin{lemma}\label{dim}
The manifold
\begin{align*}
V=\bigg\{(x,n_1,\dots,n_N):\,\, &x\in M, \,(n_k)_{1\leq k\leq
N}\subset S_x,\,\, \sum_{k=1}^{N}n_k=0\,\, \text{and there exist}\\
&k_1\neq k_2 \,\,\text{such that}\,\,
n_{k_1}\,\,\text{and}\,\,n_{k_2}\,\,\text{are linearly
independent}\bigg\}
\end{align*}
is of dimension $N(l-1)$.
\end{lemma}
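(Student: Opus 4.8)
The plan is to exhibit $V$ as a fiber bundle over $M$ and compute the dimension of a typical fiber. First I would fix $x \in M$ and examine the fiber
\[
V_x = \bigg\{(n_1,\dots,n_N) \in (S_x)^N:\ \sum_{k=1}^N n_k = 0\ \text{and some pair}\ n_{k_1}, n_{k_2}\ \text{is linearly independent}\bigg\}.
\]
Since $S_x$ is the unit sphere in the $l$-dimensional vector space $T_xM$, it has dimension $l-1$, so the product $(S_x)^N$ has dimension $N(l-1)$. The two conditions cutting out $V_x$ have opposite effects on dimension: the linear independence requirement is an \emph{open} condition, hence removes no dimension, while the constraint $\sum_k n_k = 0$ is a system of $l$ scalar equations. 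The main point is therefore to check that on the open set where at least one pair $n_{k_1}, n_{k_2}$ is linearly independent, the map $(n_1,\dots,n_N) \mapsto \sum_k n_k$ is a submersion onto a neighborhood of $0$ in $T_xM$, so that its zero set has dimension $N(l-1) - l$ at such points.

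For the submersion claim I would argue as follows. Suppose $n_{k_1}$ and $n_{k_2}$ are linearly independent at a point of $(S_x)^N$. The tangent space to $S_x$ at $n_{k_1}$ is the orthogonal complement $n_{k_1}^\perp$, and similarly at $n_{k_2}$; the differential of the summation map restricted to varying only the $k_1$ and $k_2$ coordinates has image $n_{k_1}^\perp + n_{k_2}^\perp$. Because $n_{k_1}$ and $n_{k_2}$ are not parallel, $n_{k_1}^\perp$ and $n_{k_2}^\perp$ are distinct hyperplanes in $T_xM$, so their sum is all of $T_xM$. Hence the summation map is a submersion at every point of the relevant open set, and its fiber over $0$ — which is exactly $V_x$ — is a submanifold of dimension $N(l-1) - l$, wherever it is nonempty. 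One should also note $V_x$ is nonempty for $N \geq 3$ and $l \geq 2$ (e.g.\ take $n_1, n_2$ an orthonormal pair and distribute the remaining vectors symmetrically, or more simply place $N$ vectors as the directions of a regular configuration), so the fiber is genuinely of this dimension.

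Finally I would assemble the global statement. The set $V$ is the total space of a bundle over $M$ whose fiber over $x$ is $V_x$; concretely, $V$ sits inside the $N$-fold fiber product of the unit tangent bundle $SM \to M$ (which has dimension $l + N(l-1)$ as a manifold, being $(2l-1) + (N-1)(l-1)$... more precisely the $N$-fold fiber product over $M$ has dimension $l + N(l-1)$), cut out by the smooth bundle map $(x, n_1, \dots, n_N) \mapsto \sum_k n_k \in TM$ and intersected with the open condition on linear independence. Adding the base dimension $l$ to the fiber dimension $N(l-1) - l$ gives
\[
\dim V = l + \big(N(l-1) - l\big) = N(l-1),
\]
as claimed. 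The one step that requires care is verifying the submersion property uniformly — i.e.\ that the linear-independence hypothesis is exactly what is needed to make the constraint map transverse to $0$ — but this reduces, as indicated above, to the elementary linear-algebra fact that two distinct hyperplanes in $T_xM$ span the whole space.
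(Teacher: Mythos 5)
Your argument is correct and is essentially the paper's proof: the paper applies the constant rank level set theorem to the map $(x,n_1,\dots,n_N)\mapsto\sum_k n_k$ on the open set where some pair $n_{k_1},n_{k_2}$ is linearly independent, and the surjectivity of its differential rests on exactly your observation that $n_{k_1}^{\perp}+n_{k_2}^{\perp}=T_xM$. Your fiberwise-then-assemble presentation is just a repackaging of the same computation $\dim V=(l+N(l-1))-l=N(l-1)$.
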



\begin{proof}
Consider the manifold \begin{align*} W=\bigg\{(x,n_1,\dots,n_N):\,\,
&x\in M, \,(n_k)_{1\leq k\leq N}\subset S_x\,\,\text{and there
exist}
\,\,k_1\neq k_2\\
&\text{such that}\,\, n_{k_1}\,\,\text{and}\,\,n_{k_2}\,\,\text{are
linearly independent}\bigg\},
\end{align*}
then the smooth map
$$f:\quad W\longrightarrow TM,\quad
(x,n_1,\dots,n_N)\longmapsto\sum_{k=1}^{N}n_k$$ has full rank $l$
everywhere on $W$. Now the constant rank level set theorem (see
\cite[Theorem 8.8]{Lee}) yields the desired result.
\end{proof}

Our method of studying the uniqueness of Fr\'{e}chet medians is
based on the regularity properties of the following function:
\begin{align*}
\varphi:\quad &V\times\mathbf{R}^N\longrightarrow M^N,\\
&(x,n_1,\dots,n_N,r_1,\dots,r_N)\longmapsto(\exp_x(r_1n_1),\dots,\exp_x(r_Nn_N)).
\end{align*}
For a closed interval $[a,b]\subset\mathbf{R}$, the restriction of
$\varphi$ to $V\times[a,b]^N$ will be denoted by $\varphi_{a,b}$.
The canonical projection of $V\times\mathbf{R}^N$ onto $M$ and
$\mathbf{R}^N$ will be denoted by $\sigma$ and $\zeta$,
respectively.

Generally speaking, the non uniqueness of Fr\'{e}chet medians is due
to some symmetric properties of data points. As a result, generic
data points should have a unique Fr\'{e}chet median. In mathematical
language, this means that the set of all the particular positions of
data points is of measure zero. Now our aim is to find all these
particular cases. Firstly, in view of the uniqueness result of
Riemannian medians (see \cite[Theorem 3.1]{Yang}), the first null
set that should be eliminated is
$$C_1=\bigg\{(x_1,\dots,x_N)\in M^N:
x_1,\dots,x_N \,\,\text{are contained in a single
geodesic}\bigg\}.$$ Observe that $C_1$ is a closed subset of $M^N$.
The second null set coming into our sight is the following one:
$$C_2=\bigg\{(x_1,\dots,x_N)\in M^N:\,\,(x_1,\dots,x_N)\,\,\text{is a critical value of}\,\,\varphi_{0,L}\bigg\}.$$
Since $\varphi$ is smooth, Sard's theorem implies that $C_2$ is of
measure zero. Moreover, it is easily seen that $(M^N\setminus
C_1)\cap C_2$ is closed in $M^N\setminus C_1$.

The following proposition says that apart form $C_1\cup C_2$ one
can only have a finite number of Fr\'{e}chet medians. 

\begin{proposition}\label{finite}
$Q(x_1,\dots,x_N)$ is a finite set for every $(x_1,\dots,x_N)\in
M^N\setminus (C_1\cup C_2)$.
\end{proposition}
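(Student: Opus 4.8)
The plan is to show that if $Q(x_1,\dots,x_N)$ were infinite, then the data point $(x_1,\dots,x_N)$ would be forced to lie in $C_1 \cup C_2$. Since $M^N \setminus (C_1 \cup C_2)$ excludes these degenerate configurations, finiteness follows. First I would observe that $Q(x_1,\dots,x_N)$ is compact (as noted after the definition of $Q_\mu$, since $d(p,q) \le 2f^*_\mu$ for medians $p,q$). So if it were infinite, it would have an accumulation point $m$. The strategy is to analyze $m$ using the necessary conditions of Proposition \ref{two cases}: either $m$ coincides with no $x_k$, or it coincides with some $x_{k_0}$.

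Next I would treat the case where infinitely many medians coincide with no data point (or where the accumulation point $m$ does not; one can pass to a subsequence of medians, each of which is either a data point or not — since there are only finitely many data points, infinitely many medians are distinct from all $x_k$, so we may assume they are non-coincident). For each such median $q$, Proposition \ref{two cases}(i) gives unique minimizing geodesics $\gamma_1^q,\dots,\gamma_N^q$ from $q$ to the $x_k$ with $\sum_k \dot\gamma_k^q(0) = 0$; writing $r_k = d(q,x_k)$ and $n_k = \dot\gamma_k^q(0) \in S_q$, we get $(q, n_1,\dots,n_N, r_1,\dots,r_N) \in V \times [0,L]^N$ provided at least two of the $n_k$ are linearly independent — and this is exactly the condition that $(x_1,\dots,x_N) \notin C_1$, because if all the geodesics through $q$ were along a single line then all $x_k$ lie on one geodesic. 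Thus $\varphi_{0,L}$ maps this point to $(x_1,\dots,x_N)$. Now I claim $(x_1,\dots,x_N)$ is a critical value of $\varphi_{0,L}$, i.e. lies in $C_2$: the accumulation point $m$ of the medians yields, by taking a limit of the preimages $(q,n^q,r^q)$ (using compactness of $V\times[0,L]^N$, or rather of the relevant closed bounded piece, and continuity), a preimage point where the differential of $\varphi_{0,L}$ is not surjective — because having a whole sequence of distinct preimages accumulating forces degeneracy of the rank. This is the main obstacle: one must argue carefully that an accumulating family of preimages of a single point under $\varphi_{0,L}$ produces a critical point, handling the fact that $V$ is a manifold of dimension $N(l-1)$ by Lemma \ref{dim} while $M^N$ has dimension $Nl$, so $\varphi_{0,L}$ has domain of dimension $N(l-1)+N = Nl$ matching the target; the inverse function theorem then says that away from critical points $\varphi_{0,L}$ is locally a diffeomorphism, hence locally injective, contradicting the accumulation of preimages.

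For the remaining case, suppose infinitely many medians coincide with data points; since there are only $N$ data points, one fixed point $x_{k_0}$ is a median and is an accumulation point of other medians — but then those other medians (distinct from $x_{k_0}$, and we may assume distinct from all data points by the previous reduction, or handle the finitely many coincidences separately) accumulate at $x_{k_0}$, and we can run the same argument with $q \to x_{k_0}$, again producing a critical point of $\varphi_{0,L}$ or a violation of $(x_1,\dots,x_N) \notin C_1$. Alternatively, one notes that the set of medians coinciding with data points is finite automatically (at most $N$ of them), so the only way to have infinitely many medians is to have infinitely many non-coincident ones, which is the case already handled. I would therefore organize the proof as: (1) reduce to infinitely many medians none of which is a data point; (2) extract an accumulation point $m$ and the associated geodesic data; (3) use $(x_1,\dots,x_N) \notin C_1$ to land in the manifold $V \times [0,L]^N$; (4) use the accumulation together with the inverse function theorem / Sard setup to conclude $(x_1,\dots,x_N) \in C_2$, contradicting the hypothesis. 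The delicate point throughout is the passage to the limit of minimizing geodesics and the verification that the limiting configuration is genuinely a critical point of $\varphi_{0,L}$ rather than merely a limit of regular points.
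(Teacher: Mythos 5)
Your argument is correct and is essentially the paper's own proof run in contrapositive form: the paper uses Proposition \ref{two cases} to place the medians not coinciding with any data point inside $\sigma\circ\varphi_{0,L}^{-1}(x_1,\dots,x_N)$, then invokes Lemma \ref{dim} together with $(x_1,\dots,x_N)\notin C_2$ to see that this preimage is a zero-dimensional regular submanifold (isolated points) and $(x_1,\dots,x_N)\notin C_1$ to see that it is compact, hence finite. Your accumulation-point argument via the inverse function theorem is precisely the observation that a compact discrete set is finite, and your handling of the limit of preimages (linear dependence of the limiting directions would force all $x_k$ onto a single geodesic, i.e.\ membership in $C_1$) matches the paper's compactness step, so the two proofs coincide in substance.
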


\begin{proof}
Let $(x_1,\dots,x_N)\in M^N\setminus (C_1\cup C_2)$ and $A\subset
Q(x_1,\dots,x_N)$ be the set of medians that do not coincide with
any $x_k$. If $A=\phi$, then there is nothing to prove. Now assume
that $A\neq\phi$, then Proposition \ref{two cases} implies that
$A\subset\sigma\circ\varphi_{0,L}^{-1}(x_1,\dots,x_N)$. Moreover,
Lemma \ref{dim} and the constant rank level set theorem imply that
$\varphi_{0,L}^{-1}(x_1,\dots,x_N)$ is a zero dimensional regular
submanifold of $V\times[0,L]^N$, that is, some isolated points.
Since $(x_1,\dots,x_N)\notin C_1$,
$\varphi_{0,L}^{-1}(x_1,\dots,x_N)$ is also compact, hence it is a
finite set. So that $A$ is also finite, as desired.
\end{proof}

The following two lemmas enable us to avoid the problem of cut
locus.
\begin{lemma}\label{diffeo null}
Let $U$ be a bounded open subset of $V\times\mathbf{R}^N$ such that
$\varphi:\,\, U\longmapsto \varphi(U)$ is a diffeomorphism, then
$$\lambda_{M}^{\otimes N}\bigg\{(x_1,\dots,x_N)\in
\varphi(U):\sigma\circ\varphi^{-1}(x_1,\dots,x_N)\in\bigcup_{k=1}^{N}\cut(x_k)\bigg\}=0.$$

\end{lemma}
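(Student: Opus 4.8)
The plan is to reduce the statement to a countable union of null sets, each coming from a single cut-locus stratum, and to exhibit each such set as the image under a smooth map of a submanifold of strictly smaller dimension, hence Lebesgue-null in $M^N$. The key point is that the map $\varphi$ restricted to $U$ is a diffeomorphism onto $\varphi(U)$, so the set in question is literally the $\varphi$-image of
\[
E_k=\bigl\{(x,n_1,\dots,n_N,r_1,\dots,r_N)\in U:\, x\in\cut(\exp_x(r_kn_k))\bigr\},
\]
summed over $k=1,\dots,N$; since $\varphi$ is a diffeomorphism it is in particular locally Lipschitz, so it suffices to show $\lambda^{\otimes N}_M\bigl(\varphi(E_k)\bigr)=0$, and for that it is enough to show that $E_k$ is contained in a countable union of embedded submanifolds of $V\times\mathbf{R}^N$ of dimension strictly less than $Nl=\dim(V\times\mathbf{R}^N)$ (recall $\dim V=N(l-1)$ by Lemma \ref{dim}, so $\dim(V\times\mathbf{R}^N)=N(l-1)+N=Nl$, matching $\dim M^N$).

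First I would fix $k$, say $k=1$ for notational simplicity, and analyze the condition ``$x\in\cut(y)$'' where $y=\exp_x(r_1n_1)$. The cut locus of a point $y$ in a complete manifold is a set of measure zero and, more precisely, it is a countable union of submanifolds of dimension at most $l-1$ (this is classical; one may invoke the stratification of the cut locus, or simply the fact that $\cut(y)$ has Hausdorff dimension $\le l-1$). I would make this uniform in $y$: consider the ``cut bundle''
\[
\mathcal C=\bigl\{(y,x)\in M\times M:\, x\in\cut(y)\bigr\}\subset M\times M,
\]
which has dimension at most $l+(l-1)=2l-1$. Then $E_1$ is the preimage of $\mathcal C$ under the smooth map $\Psi:U\to M\times M$, $(x,n_1,\dots,n_N,r_1,\dots,r_N)\mapsto(\exp_x(r_1n_1),x)$; I would observe that $\Psi$ has a right inverse built from $\varphi$ and the projections $\sigma,\zeta$ — indeed, knowing $x$, $\exp_x(r_1n_1)$, and the remaining coordinates $(n_2,\dots,n_N,r_2,\dots,r_N)$ one recovers $(n_1,r_1)$ (at least on the diffeomorphism domain $U$) — so the fibres of $\Psi$ have dimension exactly $Nl-2l=(N-2)l$, whence $\dim E_1\le \dim\mathcal C+(N-2)l\le (2l-1)+(N-2)l=Nl-1<Nl$. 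Applying this for each $k$ and taking the (countable) union over the strata of the various cut loci, then pushing forward by the locally Lipschitz map $\varphi$, gives the claimed vanishing of the measure.

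The main obstacle I anticipate is making the cut-locus stratification genuinely locally uniform in the basepoint $y$ and measurable in a way that lets one conclude $\dim\mathcal C\le 2l-1$ cleanly; the cut locus does not vary smoothly with $y$, so one cannot literally say $\mathcal C$ is a manifold. The honest way around this is to bypass the global structure of $\mathcal C$ and argue pointwise: for each fixed choice of the free coordinates $(x,n_2,\dots,n_N,r_2,\dots,r_N)$ — equivalently, for $\lambda_M$-almost every value of the point $\sigma\circ\varphi^{-1}$ — the remaining point $\exp_x(r_1n_1)$ ranges over $M$ as $(n_1,r_1)$ varies, and for $\lambda_M$-almost every such point $y$ one has $\sigma\circ\varphi^{-1}\notin\cut(y)$ because $\cut(y)$ is $\lambda_M$-null; a Fubini argument in the $\varphi$-coordinates (where $\lambda_M^{\otimes N}$ pulls back to a measure absolutely continuous with respect to the coordinate Lebesgue measure, by the diffeomorphism property) then yields that the bad set has measure zero. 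I would structure the final write-up around this Fubini argument, using the dimension count above only as motivation, since it is the route that avoids any delicate claim about the regularity of the cut locus as the basepoint moves.
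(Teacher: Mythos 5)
Your overall strategy --- pass to the $U$-coordinates via the diffeomorphism $\varphi$ so that the point $x=\sigma\circ\varphi^{-1}(x_1,\dots,x_N)$ becomes an honest coordinate, use the symmetry $x\in\cut(y)\Leftrightarrow y\in\cut(x)$, and finish with a Fubini argument --- is the paper's strategy: the paper changes variables with $\det(D\varphi)$ bounded on the bounded set $U$ and then integrates out a single variable. You were also right to set aside the dimension count through the ``cut bundle'' $\mathcal C$; the paper makes no attempt to control the regularity of $\cut(y)$ as $y$ varies, and the honest route is indeed the Fubini one.

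However, the Fubini step you actually commit to contains a genuine error in the bookkeeping of free coordinates. On $V$ the directions satisfy the constraint $\sum_{k=1}^{N}n_k=0$, so once $(x,n_2,\dots,n_N)$ are fixed, $n_1=-\sum_{k\geq 2}n_k$ is determined: the ``remaining'' variables are not $(n_1,r_1)$ but only $r_1$, and $\exp_x(r_1n_1)$ sweeps out a single geodesic, not $M$. Consequently the justification ``for $\lambda_M$-almost every such $y$ one has $x\notin\cut(y)$ because the cut locus is $\lambda_M$-null'' does not apply: a $\lambda_M$-null set such as $\cut(x)$ could a priori meet a fixed curve in a set of parameter values of positive Lebesgue measure. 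What your slicing actually requires is the one-dimensional statement that $\{r\in\mathbf{R}:\exp_x(rn)\in\cut(x)\}$ is Lebesgue-null for (almost every) fixed $(x,n)$ --- which is precisely the vanishing inner integral $\int_{\mathbf{R}}\mathbf{1}_{\{\exp_x(r_Nn_N)\in\cut(x)\}}\,dr_N=0$ on which the paper's computation rests. You never establish this, and it is the real crux; it can be obtained, for instance, by first showing that $\exp_x^{-1}(\cut(x))$ is null in $T_xM$ (because $\exp_x$ is a local diffeomorphism off its critical set, and that critical set is null since conjugate points along each ray are discrete), and then applying Fubini in polar coordinates together with the fact that the marginal of $n_N$ under the measure on $V$ is absolutely continuous on $S_x$. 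Without some such ingredient the argument does not close.
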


\begin{proof}
Without loss of generality, we will show that $\lambda_{M}^{\otimes
N}\{(x_1,\dots,x_N)\in
M^N:\sigma\circ\varphi^{-1}(x_1,\dots,x_N)\in\cut(x_N)\}=0$. In
fact, letting $(x_1,\dots,x_N)=\varphi(x,n_1,\dots
n_N,r_1,\dots,r_N)$, $\textbf{n}=(n_1,\dots n_N)$ and $\det
(D\varphi)\leq c$ on $U$ for some $c>0$, then the change of variable
formula and Fubini's theorem yield that
\begin{align*}
&\lambda_{M}^{\otimes N}\{(x_1,\dots,x_N)\in\varphi(U)
:\sigma\circ\varphi^{-1}(x_1,\dots,x_N)\in\cut(x_N)\}\\
=\,&\lambda_{M}^{\otimes N}\{(x_1,\dots,x_N)\in
\varphi(U):x_N\in\cut(\sigma\circ\varphi^{-1}(x_1,\dots,x_N))\}\\
=\,&\int_{\varphi(U)}\textbf{1}_{\{x_N\in\cut(\sigma\circ\varphi^{-1}(x_1,\dots,x_N))\}}dx_1\dots
dx_N\\
=\,&\int_U
\textbf{1}_{\{\exp_x(r_Nn_N)\in\cut(x)\}}\det(D\varphi)dx\,d\textbf{n}\,dr_1\dots
dr_N\\
\leq\,&c\int_{V\times \mathbf{R}^N}
\textbf{1}_{\{\exp_x(r_Nn_N)\in\cut(x)\}}dx\,d\textbf{n}\,dr_1\dots
dr_N\\
=\,&c\int_{V\times \mathbf{R}^{N-1}}dx\,d\textbf{n}\,dr_1\dots
dr_{N-1}\int_{\mathbf{R}} \textbf{1}_{\{\exp_x(r_Nn_N)\in\cut(x)\}}
dr_N\\
=\,& 0.
\end{align*}
The proof is complete.
\end{proof}

In order to tackle the cut locus, it is easily seen that the
following null set should be eliminated:
$$C_3=\bigg\{(x_1,\dots,x_N)\in
M^N:\,\,x_i\in\{x_j\}\cup\cut(x_j),\,\,\text{for some}\,\,i\neq
j\bigg\}.$$ Observe that $C_3$ is also closed because the set
$\{(x,y)\in M^2:\,\,x\in\cut(y)\}$ is closed.

\begin{lemma}\label{diffeo null 1}
For every $(x_1,\dots,x_N)\in M^N\setminus(C_1\cup C_2\cup C_3)$,
there exists $\delta>0$ such that
$$\lambda_{M}^{\otimes N}\bigg\{(y_1,\dots,y_N)\in
B(x_1,\delta)\times\dots\times
B(x_N,\delta):\,\,Q(y_1,\dots,y_N)\cap\bigcup_{k=1}^{N}\cut(y_k)\neq\phi\bigg\}=0.$$
\end{lemma}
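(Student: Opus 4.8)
The plan is to combine the finiteness statement of Proposition \ref{finite} with the covering argument underlying Lemma \ref{diffeo null}. Fix $(x_1,\dots,x_N)\in M^N\setminus(C_1\cup C_2\cup C_3)$. By Proposition \ref{finite}, $Q(x_1,\dots,x_N)$ is finite; write $Q(x_1,\dots,x_N)=\{m_1,\dots,m_q\}$. The medians that coincide with some data point give no trouble: if $m=x_{k_0}$ then since $(x_1,\dots,x_N)\notin C_3$ we have $x_{k_0}\notin\cut(x_k)$ for all $k$ with $x_k\neq x_{k_0}$, and of course $x_{k_0}\notin\cut(x_{k_0})$, so such an $m$ never lies in $\bigcup_k\cut(x_k)$; moreover this remains true for data points in a small enough neighborhood, since the bad set $\{(x,y):x\in\cut(y)\}$ is closed. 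So we may restrict attention to medians not coinciding with any $x_k$.

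For such medians, Proposition \ref{two cases}(i) says each $m_i$ lies in $\sigma\circ\varphi_{0,L}^{-1}(x_1,\dots,x_N)$, and since $(x_1,\dots,x_N)\notin C_1\cup C_2$, Lemma \ref{dim} and the constant rank level set theorem make $\varphi_{0,L}^{-1}(x_1,\dots,x_N)$ a finite set of points in $V\times[0,L]^N$ at which $\varphi$ is a local diffeomorphism (full rank). For each of the finitely many preimage points $P$, choose a bounded open neighborhood $U_P$ in $V\times\mathbf R^N$ on which $\varphi$ restricts to a diffeomorphism onto its image. By shrinking, we may assume the images $\varphi(U_P)$ are pairwise disjoint from the ``diagonal-and-cut'' closed set defining $C_3$ restricted away from these points, and we may pick $\delta>0$ small enough that the box $B(x_1,\delta)\times\cdots\times B(x_N,\delta)$ is contained in $\bigcup_P\varphi(U_P)$ together with a neighborhood on which no median coinciding with a data point can hit a cut locus. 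Here one uses that outside the finitely many $\varphi(U_P)$ the equation $\varphi(\,\cdot\,)=(y_1,\dots,y_N)$ has no solution for $(y_1,\dots,y_N)$ near $(x_1,\dots,x_N)$, which follows from properness of $\varphi_{0,L}$ (compact domain) exactly as in the proof of Proposition \ref{finite}.

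Now for $(y_1,\dots,y_N)$ in this box, any median $m$ not equal to a data point satisfies $(y_1,\dots,y_N)=\varphi_{0,L}(P')$ for some $P'\in\bigcup_P U_P$, by Proposition \ref{two cases}(i) again, so $m=\sigma\circ\varphi^{-1}(y_1,\dots,y_N)$ relative to one of the diffeomorphic charts $U_P$. Hence
\[
\bigl\{(y_1,\dots,y_N):\,Q(y_1,\dots,y_N)\cap\textstyle\bigcup_k\cut(y_k)\neq\phi\bigr\}
\subset\bigcup_P\bigl\{(y_1,\dots,y_N)\in\varphi(U_P):\sigma\circ\varphi^{-1}(y_1,\dots,y_N)\in\textstyle\bigcup_k\cut(y_k)\bigr\},
\]
a finite union of sets each of $\lambda_M^{\otimes N}$-measure zero by Lemma \ref{diffeo null}. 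Therefore the whole set has measure zero.

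The main obstacle is the bookkeeping in the previous paragraph: one must know that for $(y_1,\dots,y_N)$ close to $(x_1,\dots,x_N)$ \emph{every} median not coinciding with a data point comes from one of the fixed charts $U_P$, i.e. that no ``new'' solution of $\varphi_{0,L}(P')=(y_1,\dots,y_N)$ appears near the boundary of the domain $V\times[0,L]^N$ or escapes to infinity. This is handled by the compactness/properness argument already used for Proposition \ref{finite}: the preimage $\varphi_{0,L}^{-1}$ of a compact neighborhood of $(x_1,\dots,x_N)$ in $M^N\setminus C_1$ is compact, so it is covered by finitely many charts, and shrinking $\delta$ confines all preimages to $\bigcup_P U_P$. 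Everything else is a direct appeal to Lemma \ref{diffeo null} applied chart by chart, together with the closedness of $C_3$ to dispose of the data-point medians.
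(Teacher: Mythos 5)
Your proof is correct and follows essentially the same route as the paper: cover the finite set $\varphi_{0,L}^{-1}(x_1,\dots,x_N)$ by finitely many charts on which $\varphi$ is a diffeomorphism (the paper gets these via the stack of records theorem applied to the truncated domain $V\times[\varepsilon,L+\eta]^N$), apply Lemma \ref{diffeo null} chart by chart, and dispose of medians at the data points using the closedness of the cut relation and the exclusion of $C_3$. The one genuine difference is that the paper splits medians by whether they lie within $\varepsilon$ of a data point rather than by exact coincidence, so that medians near (but distinct from) data points are handled by an elementary $2\varepsilon$-argument; your version must instead argue that the $\varphi$-preimages of such medians (with some $r_{k_0}$ arbitrarily small) are still captured by the fixed charts, which does follow from properness over $M^N\setminus C_1$ together with $(x_1,\dots,x_N)\notin C_2$ being a regular value, but is the one step you should write out in full.
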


\begin{proof}
If $Q(x_1,\dots,x_N)\subset\{x_1,\dots,x_N\}$, then the assertion is
trivial by Theorem \ref{consistency}. Now assume that
$Q(x_1,\dots,x_N)\setminus\{x_1,\dots,x_N\}\neq\phi$, then the proof
of Proposition \ref{finite} yields that
$\varphi_{0,L}^{-1}(x_1,\dots,x_N)$ is finite. Hence we can choose
$\varepsilon,\eta>0$ and $O$ a relatively compact open subset of $V$
such that $\varepsilon<\min\{\,\inj
M/2,\min\{r_k:(r_1,\dots,r_N)\in\zeta\circ\varphi_{0,L}^{-1}(x_1,\dots,x_N),k=1,\dots,N
\}$\}, $B(x_i,2\varepsilon)\cap \cut(B(x_j,2\varepsilon))=\phi$ and
$\varphi_{\varepsilon,L+\eta}^{-1}$ $(x_1,\dots,x_N)\subset
O\times(\varepsilon,L+\eta)^N$. Then by the stack of records theorem
(see \cite[Exercise 7, Chapter 1, Section 4]{Guillemin}), there
exists $\delta\in(0,\varepsilon)$ such that
$U=B(x_1,\delta)\times\dots\times B(x_N,\delta)$ verifies
$\varphi_{\varepsilon,L+\eta}^{-1}(U)=V_1\cup\dots\cup V_h$,
$V_i\cap V_j=\phi$ for $i\neq j$ and $\varphi_{\varepsilon,L+\eta}:
V_i\rightarrow U$ is a diffeomorphism for every $i$. Lemma
\ref{diffeo null} yields that there exists a null set $A\subset U$,
such that for every $(y_1,\dots,y_N)\in U\setminus A$ and for every
$(y,n_1,\dots,n_N,r_1,\dots,r_N)\in\varphi_{\varepsilon,L+\eta}^{-1}(y_1,\dots,y_N)$
one always has $y\notin\bigcup_{k=1}^{N}\cut(y_k)$. Particularly,
for $m\in Q(y_1,\dots,y_N)$ such that $d(m,y_k)\geq\varepsilon$ for
every $k$, we have $m\notin\bigcup_{k=1}^{N}\cut(y_k)$. Now let
$m\in Q(y_1,\dots,y_N)$ such that $d(m,y_{k_0})<\varepsilon$ for
some $k_0$, then $d(m,x_{k_0})\leq
d(m,y_{k_0})+d(y_{k_0},x_{k_0})<2\varepsilon$. So that
$m\notin\bigcup_{k=1}^{N}\cut(y_k)$ since $y_k\in
B(x_k,2\varepsilon)$. This completes the proof.
\end{proof}

Now the cut locus can be eliminated without difficulty.

\begin{proposition}\label{cut locus}
The set  $$C_4=\bigg\{(x_1,\dots,x_N)\in
M^N:\,\,Q(x_1,\dots,x_N)\cap\bigcup_{k=1}^{N}\cut(x_k)\neq\phi\bigg\}$$
is of measure zero and is closed.
\end{proposition}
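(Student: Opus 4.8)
The plan is to prove the two assertions separately: first that $C_4$ is closed, then that $\lambda_M^{\otimes N}(C_4)=0$.

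For closedness, I would argue directly from the consistency result (Theorem \ref{consistency}) together with the fact that $\{(x,y)\in M^2:\,x\in\cut(y)\}$ is a closed subset of $M^2$ (this fact was already invoked in the definition of $C_3$). Suppose $(x_1^{(j)},\dots,x_N^{(j)})\in C_4$ converges to $(x_1,\dots,x_N)$ as $j\to\infty$. For each $j$ pick a median $m^{(j)}\in Q(x_1^{(j)},\dots,x_N^{(j)})$ together with an index $k_j$ with $m^{(j)}\in\cut(x_{k_j}^{(j)})$; passing to a subsequence we may assume $k_j\equiv k$ is constant and, by compactness of $M$, that $m^{(j)}\to m$. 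Since the empirical measures converge in $W_1$ (their atoms converge), Theorem \ref{consistency} forces $d(m,Q(x_1,\dots,x_N))=0$, hence $m\in Q(x_1,\dots,x_N)$ because the median set is compact. On the other hand $m^{(j)}\in\cut(x_k^{(j)})$ and both sequences converge, so by closedness of the cut-locus relation $m\in\cut(x_k)$. Therefore $(x_1,\dots,x_N)\in C_4$, and $C_4$ is closed.

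For the null-set assertion, the strategy is to cover $M^N\setminus(C_1\cup C_2\cup C_3)$ by the local neighbourhoods produced in Lemma \ref{diffeo null 1}. Since $C_1$, $C_2$ (relatively closed in $M^N\setminus C_1$) and $C_3$ together form a set of measure zero, it suffices to show $C_4\cap\big(M^N\setminus(C_1\cup C_2\cup C_3)\big)$ is null. By Lemma \ref{diffeo null 1}, every point of this set has an open product neighbourhood $B(x_1,\delta)\times\cdots\times B(x_N,\delta)$ inside which $C_4$ meets only a $\lambda_M^{\otimes N}$-null set. The manifold $M^N$ is second countable, so this open cover admits a countable subcover; a countable union of null sets is null, whence $\lambda_M^{\otimes N}\big(C_4\setminus(C_1\cup C_2\cup C_3)\big)=0$ and therefore $\lambda_M^{\otimes N}(C_4)=0$.

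The only genuinely delicate point is the closedness argument, and even there the subtlety is merely one of organization: one must be careful to extract a \emph{single} limiting median $m$ that simultaneously survives the passage to the limit on the median side (via consistency and compactness of $Q$) and on the cut-locus side (via closedness of the relation $x\in\cut(y)$), which is why the index $k_j$ and the point $m^{(j)}$ must be chosen together and stabilized along a common subsequence. The null-set half is essentially bookkeeping: everything has been prepared by Lemmas \ref{diffeo null} and \ref{diffeo null 1}, and only a standard second-countability covering argument remains.
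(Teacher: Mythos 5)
Your proposal is correct and follows essentially the same route as the paper: the measure-zero half is exactly the paper's argument (Lemma \ref{diffeo null 1} combined with second countability of $M^N\setminus(C_1\cup C_2\cup C_3)$ and the fact that $C_1\cup C_2\cup C_3$ is itself null). Your closedness argument, via Theorem \ref{consistency}, compactness of $Q_\mu$, and the closedness of the relation $x\in\cut(y)$, correctly supplies the detail that the paper leaves implicit when it says it suffices to prove the measure-zero assertion.
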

\begin{proof}
It suffices to show that $C_4$ is of measure zero. This is a direct
consequence of Lemma \ref{diffeo null 1} and the fact that
$M\setminus(C_1\cup C_2\cup C_3)$ is second countable.
\end{proof}

Let $x,y\in M$ such that $y\notin\{x\}\cup\cut(x)$, we denote
$\gamma_{xy}:[0,d(x,y)]\rightarrow M$ the unique minimizing geodesic
such that $\gamma_{xy}(0)=x$ and $\gamma_{xy}(d(x,y))=y$. For every
$u\in T_xM$ and $v\in T_yM$, let $J(v,u)(\cdot)$ be the unique
Jacobi field along $\gamma_{xy}$ with boundary condition
$J(u,v)(0)=u$ and $J(u,v)(d(x,y))=v$.

\begin{lemma}\label{derivative}
Let $x,y\in M$ such that $y\notin\{x\}\cup\cut(x)$. Then for every
$v\in T_yM$, we have
$$\nabla_v\frac{\exp_x^{-1}(\cdot)}{d(x,\cdot)}=\dot{J}(0_x,v^{\nor})(0),$$
where $v^{\nor}$ is the normal component of $v$ with respect to
$\dot{\gamma}_{xy}(d(x,y))$.
\end{lemma}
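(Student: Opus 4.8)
The plan is to compute the covariant derivative of the vector field $y\mapsto \exp_x^{-1}(y)/d(x,y)$ along a variation of $y$ in the direction $v$, by identifying it with the endpoint data of a suitable Jacobi field. First I would set up a smooth curve $c:(-\epsilon,\epsilon)\to M$ with $c(0)=y$ and $\dot c(0)=v$, chosen so that it stays inside the (open) domain on which $\exp_x^{-1}$ and $d(x,\cdot)$ are smooth (possible since $y\notin\{x\}\cup\cut(x)$, and this set is open). For each $s$ consider the minimizing geodesic $\gamma_s=\gamma_{x c(s)}$ from $x$ to $c(s)$, parametrized on a fixed interval $[0,1]$ say, giving a geodesic variation $\Gamma(s,t)=\exp_x\bigl(t\,\exp_x^{-1}(c(s))\bigr)$. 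The variation field $t\mapsto \partial_s\Gamma(s,t)|_{s=0}$ is then a Jacobi field $Y$ along $\gamma_0=\gamma_{xy}$ with $Y(0)=0_x$ (since $\Gamma(s,0)\equiv x$) and $Y(1)=\dot c(0)=v$.

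Next I would relate $\exp_x^{-1}(\cdot)/d(x,\cdot)$ to this picture. Writing $w(s)=\exp_x^{-1}(c(s))\in T_xM$, one has $\Gamma(s,t)=\exp_x(tw(s))$, and $w(s)/|w(s)| = \exp_x^{-1}(c(s))/d(x,c(s))$ is precisely the vector field whose covariant derivative we want, pulled back along $c$. The key identity is that under the canonical identification coming from the variation, $\nabla_v\bigl(\exp_x^{-1}(\cdot)/d(x,\cdot)\bigr)$ corresponds to $\dot Y(0)$, the covariant derivative of the Jacobi field $Y$ at $t=0$: indeed, $\partial_s\Gamma(s,t)$ at $t$ close to $0$ is, to first order in $t$, $t\,\dot Y(0)$ plus higher order terms, and differentiating the radial direction field amounts to extracting exactly this initial velocity $\dot Y(0)$. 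This is the standard fact that the differential of $\exp_x^{-1}$ (suitably normalized) is encoded by Jacobi fields; I would make it precise by computing $\nabla_s \partial_t\Gamma$ at $t=0$ and using $\nabla_s\partial_t\Gamma=\nabla_t\partial_s\Gamma$ together with $\partial_s\Gamma(s,0)=0$.

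The remaining point is to replace $v$ by its normal component $v^{\nor}$ and $Y$ by $J(0_x,v^{\nor})$. Decompose $v=v^{\tan}+v^{\nor}$ along $\dot\gamma_{xy}(d(x,y))$. The tangential part $v^{\tan}$ corresponds to moving $c(s)$ radially, i.e. rescaling $w(s)$ while keeping its direction fixed; hence it contributes nothing to the derivative of the \emph{normalized} vector field $w/|w|$. Therefore only $v^{\nor}$ matters, and the associated variation field is the Jacobi field vanishing at $t=0$ with value $v^{\nor}$ at the endpoint, which is exactly $J(0_x,v^{\nor})$; its initial covariant derivative is $\dot J(0_x,v^{\nor})(0)$, giving the claimed formula. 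The main obstacle I anticipate is the bookkeeping of the two identifications — between $T_xM$ (where $\exp_x^{-1}$ lands) and the Jacobi field along $\gamma_{xy}$, and the parametrization normalization $|w(s)|=d(x,c(s))$ versus a fixed interval — so that the tangential part genuinely drops out and the constant in front of $\dot J(0_x,v^{\nor})(0)$ is exactly $1$; this requires care but no deep new idea, essentially the symmetry lemma $\nabla_s\partial_t=\nabla_t\partial_s$ and the first Jacobi equation near $t=0$.
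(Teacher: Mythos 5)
Your argument is correct and in substance the same as the paper's: both rest on (i) the identification $\nabla_v\exp_x^{-1}(\cdot)/d(x,y)=\dot J(0_x,v)(0)$ coming from the geodesic variation $\Gamma(s,t)=\exp_x\big(t\exp_x^{-1}(c(s))\big)$ together with the symmetry lemma, (ii) the first variation of distance $\nabla_v d(x,\cdot)=\langle v,-\exp_y^{-1}x/d(x,y)\rangle$, and (iii) linearity of $J$ in its boundary data. The paper applies the quotient rule to $\exp_x^{-1}(\cdot)/d(x,\cdot)$, quotes (i) from Arnaudon--Li, and observes that the term produced by differentiating the denominator is exactly $\dot J(0_x,v^{\tan})(0)$, since $J(0_x,v^{\tan})(t)=\langle v,\dot\gamma_{xy}(d(x,y))\rangle\,(t/d(x,y))\,\dot\gamma_{xy}(t)$; you instead derive (i) from scratch and organize the computation by splitting $v=v^{\tan}+v^{\nor}$ first. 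Both routes work. The one place your write-up is too quick is the assertion in your second paragraph that $\nabla_v\big(\exp_x^{-1}(\cdot)/d(x,\cdot)\big)$ ``corresponds to $\dot Y(0)$'': for general $v$ this is false, because $\dot Y(0)$ equals $\nabla_v\exp_x^{-1}(\cdot)/d(x,y)$ (denominator frozen at $y$), and the two differ by precisely the tangential term $\dot J(0_x,v^{\tan})(0)$. Your decomposition rescues the claim, but only because $\nabla_{v^{\nor}}d(x,\cdot)=\langle v^{\nor},\dot\gamma_{xy}(d(x,y))\rangle=0$ (Gauss lemma / first variation of arc length), so that in the normal direction the normalization $|w(s)|=d(x,c(s))$ is constant to first order and the derivative of $w/|w|$ really does reduce to $\nabla_{v^{\nor}}w/d(x,y)=\dot J(0_x,v^{\nor})(0)$. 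This is exactly the piece of ``bookkeeping'' you flag at the end but do not carry out; it should be stated explicitly, after which the proof is complete.
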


\begin{proof}
By \cite[p. 1517]{Arnaudon3},
\begin{align*}
\nabla_v\frac{\exp_x^{-1}(\cdot)}{d(x,\cdot)}=&\frac{\nabla_v
\exp_x^{-1}(\cdot)}{d(x,y)}-\frac{\exp_x^{-1}y\nabla_v
d(x,\cdot)}{d(x,y)^2}\\
=&\frac{\nabla_v
\exp_x^{-1}(\cdot)}{d(x,y)}-\bigg\langle v,\frac{-\exp_y^{-1}x}{d(x,y)}\bigg\rangle\frac{\exp_x^{-1}y}{d(x,y)^2}\\
=&\dot{J}(0_x,v)(0)-\dot{J}(0_x,v^{\tan})(0)\\
=&\dot{J}(0_x,v^{\nor})(0),
\end{align*}
where $v^{\tan}$ is the tangent component of $u$ with respect to
$\dot{\gamma}_{xy}(d(x,y))$.
\end{proof}

With this differential formula, another particular case can be
eliminated now.
\begin{proposition}\label{sum norm 1}
The set
$$C_5=\bigg\{(x_1,\dots x_N)\in M^N\setminus (C_1\cup C_3): \bigg|\sum_{k\neq
k_0}\frac{\exp_{x_{k_0}}^{-1}x_k}{d(x_{k_0},x_k)}\bigg|=1\,\,\text{for
some}\,\,k_0\bigg\}$$ is of measure zero and is closed in
$M^N\setminus (C_1\cup C_3)$.
\end{proposition}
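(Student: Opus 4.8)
The plan is to show that $C_5$ is a measure-zero closed subset of $M^N\setminus(C_1\cup C_3)$ by viewing it as the zero set of a smooth function that is submersive wherever it vanishes, so that $C_5$ is (locally) a hypersurface in the open manifold $M^N\setminus(C_1\cup C_3)$. First I would fix an index $k_0$; by symmetry and since a finite union of null (resp.\ relatively closed) sets is null (resp.\ relatively closed), it suffices to treat the set where the displayed norm equals $1$ for this particular $k_0$, say $k_0=N$. On the open set $M^N\setminus(C_1\cup C_3)$ the point $x_N$ is never equal to, nor in the cut locus of, any other $x_k$, so the map
\[
G:\ M^N\setminus(C_1\cup C_3)\longrightarrow\mathbf{R},\qquad
(x_1,\dots,x_N)\longmapsto\bigg|\sum_{k=1}^{N-1}\frac{\exp_{x_N}^{-1}x_k}{d(x_N,x_k)}\bigg|^2
\]
is smooth, and $C_5$ (for this $k_0$) is exactly $G^{-1}(1)$, which is closed in $M^N\setminus(C_1\cup C_3)$.

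It then remains to prove that $1$ is a regular value of $G$, for then $G^{-1}(1)$ is a smooth hypersurface of the $Nl$-dimensional manifold $M^N\setminus(C_1\cup C_3)$, hence of dimension $Nl-1$ and in particular of $\lambda_M^{\otimes N}$-measure zero. To check regularity I would differentiate $G$ in the directions of a single variable $x_j$ with $1\le j\le N-1$, holding the other points fixed. Writing $V=\sum_{k=1}^{N-1}\exp_{x_N}^{-1}x_k/d(x_N,x_k)$, we have $G=|V|^2$ and, for $v\in T_{x_j}M$,
\[
\nabla_v G = 2\Big\langle V,\ \nabla_v\frac{\exp_{x_N}^{-1}x_j}{d(x_N,x_j)}\Big\rangle
= 2\big\langle V,\ \dot{J}(0_{x_N},v^{\nor})(0)\big\rangle,
\]
by Lemma \ref{derivative}, where $v^{\nor}$ is the component of $v$ orthogonal to $\dot{\gamma}_{x_Nx_j}(d(x_N,x_j))$ in $T_{x_j}M$. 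Since $v\mapsto v^{\nor}$ is onto the hyperplane orthogonal to that geodesic direction, and $v^{\nor}\mapsto\dot{J}(0_{x_N},v^{\nor})(0)$ is a linear isomorphism from that hyperplane onto the hyperplane in $T_{x_N}M$ orthogonal to $-\exp_{x_N}^{-1}x_j/d(x_N,x_j)$ (this is the standard nondegeneracy of Jacobi fields between non-conjugate points), the derivative $\nabla_v G$ vanishes for all $v\in T_{x_j}M$ precisely when $V$ is parallel to $\exp_{x_N}^{-1}x_j/d(x_N,x_j)$, i.e.\ when $V=\lambda_j\,\exp_{x_N}^{-1}x_j/d(x_N,x_j)$ for some scalar $\lambda_j$.

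Thus the main obstacle is ruling out the ``bad'' configuration in which $V$ is simultaneously proportional to each of the unit vectors $u_j:=\exp_{x_N}^{-1}x_j/d(x_N,x_j)$, $j=1,\dots,N-1$, at a point where moreover $|V|=1$. If $V=\lambda_j u_j$ for every $j$ with $|V|=1\neq0$, then all the $u_j$ are parallel, i.e.\ $u_j=\pm u_1$ for all $j$; hence the geodesics $\gamma_{x_Nx_j}$ all have initial velocity $\pm u_1$, which forces $x_1,\dots,x_{N-1},x_N$ to lie on a single geodesic through $x_N$ in the direction $u_1$ — precisely the situation excluded by $(x_1,\dots,x_N)\notin C_1$. (Here one also uses that $\varepsilon<\inj$-type bounds are irrelevant: being outside $C_3$ already guarantees the $u_j$ and the geodesics are well defined and vary smoothly.) Therefore no point of $M^N\setminus(C_1\cup C_3)$ can be a critical point of $G$ lying in $G^{-1}(1)$, so $1$ is a regular value, $C_5$ is a smooth hypersurface, hence closed and of measure zero. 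Taking the union over $k_0=1,\dots,N$ completes the proof.
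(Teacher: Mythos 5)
Your proof is correct and follows essentially the same route as the paper's: reduce to a single index $k_0$, realize the set as the level set $\{|V|^2=1\}$ of a smooth function, differentiate in one variable $x_j$ via Lemma \ref{derivative}, and use the exclusion of $C_1$ to produce some $j$ for which $V$ is not parallel to $\exp_{x_N}^{-1}x_j$, so that $1$ is a regular value and the level set is a null hypersurface. If anything you are slightly more careful than the paper, which claims $\grad h$ is nowhere vanishing on all of $M^N\setminus(C_1\cup C_3)$ (false where $V=0$), whereas you correctly restrict the regularity check to the level set where $|V|=1\neq0$.
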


\begin{proof}
Without loss of generality, let us show that
$$C^{\prime}_5=\bigg\{(x_1,\dots,x_N)\in M^N\setminus
(C_1\cup C_3):h(x_1,\dots,x_N)=1\bigg\}$$ is of measure zero, where
$$h(x_1,\dots,x_N)=\bigg|\sum_{k=1
}^{N-1}\frac{\exp_{x_{N}}^{-1}x_k}{d(x_{N},x_k)}\bigg|^2.$$ By the
constant rank level set theorem,  it suffices to show that $\grad h$
is nowhere vanishing on $M^N\setminus (C_1\cup C_3)$. To this end,
let $(x_1,\dots,x_N)\in M^N\setminus (C_1\cup C_3)$ and $u=\sum_{k=1
}^{N-1}\exp_{x_{N}}^{-1}x_k/d(x_{N},x_k)$. Since $N\geq 3$, without
loss of generality, we can assume that $u$ and
$\exp_{x_{N}}^{-1}x_1$ are not parallel. Then for each $v\in
T_{x_1}M$, by lemma \ref{derivative} we have
\begin{align*}
\nabla_v h(\cdot,x_2,\dots,x_N)=
\bigg\langle\nabla_v\frac{\exp_{x_{N}}^{-1}x_1}{d(x_{N},x_1)},u\bigg\rangle
=2\langle
\dot{J}(0_{x_N},v^{\nor})(0),u\rangle=2\langle\psi(v),u\rangle,
\end{align*}
where the linear map $\psi$ is defined by
$$\psi:\quad
T_{x_1}M\longrightarrow T_{x_N}M,\quad
v\longmapsto\dot{J}(0_{x_N},v^{\nor})(0),$$ $v^{\nor}$ is the normal
component of $v$ with respect to $\exp_{x_1}^{-1}x_{N}$. Hence we
have $\grad_{x_1}h(\cdot,x_2,\dots,x_N)=\psi^*(u)$, where $\psi^*$
is the adjoint of $\psi$. Since the range space of $\psi$ is the
orthogonal complement of $\exp_{x_{N}}^{-1}x_1$, one has necessarily
$\psi^*(u)\neq0$, this completes the proof.
\end{proof}

The reason why the set $C_5$ should be eliminated is given by the
following simple lemma.

\begin{lemma}\label{norm 1}
Let $(x_1,\dots,x_N),(x_1^i,\dots,x_N^i)\in M^N\setminus C_3$ for
every $i\in\mathbf{N}$ and $(x_1^i,\dots,x_N^i)\longrightarrow
(x_1,\dots,x_N)$, when $i\longrightarrow\infty$. Assume that $m_i\in
Q(x_1^i,\dots,x_N^i)\setminus\{x_1^i,\dots,x_N^i\}$ and
$m_i\longrightarrow x_{k_0}$, then $$\bigg|\sum_{k\neq k_0
}\frac{\exp_{x_{k_0}}^{-1}x_k}{d(x_{k_0},x_k)}\bigg|=1.$$
\end{lemma}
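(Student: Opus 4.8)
The plan is to read off the first-order optimality condition for a median that is not a data point from Proposition \ref{two cases}, and then pass to the limit using a continuity argument, the only difficulty being the behaviour near the accumulation point $x_{k_0}$. First I would apply Proposition \ref{two cases} i) to each $(x_1^i,\dots,x_N^i)$: this is legitimate because $m_i\in Q(x_1^i,\dots,x_N^i)$ does not coincide with any $x_k^i$, and it produces uniquely determined unit-speed minimizing geodesics $\gamma_k^i$ from $m_i$ to $x_k^i$ with $\sum_{k=1}^N\dot\gamma_k^i(0)=0$. Rewriting this as $\dot\gamma_{k_0}^i(0)=-\sum_{k\neq k_0}\dot\gamma_k^i(0)$ and using that $\dot\gamma_{k_0}^i(0)$ is a unit vector (which makes sense since $m_i\neq x_{k_0}^i$), we get $\big|\sum_{k\neq k_0}\dot\gamma_k^i(0)\big|=1$ for every $i$. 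The whole point is then to take $i\to\infty$ in this identity.

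To make that limit meaningful I would first dispose of the cut locus for the terms with $k\neq k_0$. The hypothesis $(x_1,\dots,x_N)\notin C_3$ gives $x_k\notin\{x_{k_0}\}\cup\cut(x_{k_0})$ for every $k\neq k_0$; since the set $\{(p,q)\in M^2:q\notin\{p\}\cup\cut(p)\}$ is open (its complement is the union of the diagonal and the closed set $\{(p,q):q\in\cut(p)\}$ recorded in the excerpt) and $(m_i,x_k^i)\to(x_{k_0},x_k)$, for $i$ large one has $x_k^i\notin\{m_i\}\cup\cut(m_i)$, hence $\dot\gamma_k^i(0)=\exp_{m_i}^{-1}(x_k^i)/d(m_i,x_k^i)$ for $k\neq k_0$. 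Now the map $(p,q_1,\dots,q_{N-1})\mapsto\big|\sum_j \exp_p^{-1}(q_j)/d(p,q_j)\big|$ is smooth, in particular continuous, on the open subset of $M^N$ where each $q_j\notin\{p\}\cup\cut(p)$, an open set containing both $(x_{k_0},(x_k)_{k\neq k_0})$ and $(m_i,(x_k^i)_{k\neq k_0})$ for $i$ large; evaluating it along the convergent sequence $(m_i,(x_k^i)_{k\neq k_0})\to(x_{k_0},(x_k)_{k\neq k_0})$, where its value is constantly $1$, yields $\big|\sum_{k\neq k_0}\exp_{x_{k_0}}^{-1}(x_k)/d(x_{k_0},x_k)\big|=1$, which is the claim.

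The one genuinely delicate point, and the reason the argument must be organized this way, is that the unit vectors $\dot\gamma_{k_0}^i(0)$ live in the varying tangent spaces $T_{m_i}M$, so it is meaningless to speak of their convergence; routing the limiting step through the scalar, chart-independent function $p\mapsto\big|\sum_j\exp_p^{-1}(q_j)/d(p,q_j)\big|$ rather than through the vectors $\dot\gamma_k^i(0)$ themselves is what sidesteps this. Everything else is routine: the applicability of Proposition \ref{two cases} i), the smoothness of $(p,q)\mapsto\exp_p^{-1}(q)/d(p,q)$ off the cut locus, and the openness of the complement of the cut-locus relation are all either standard or already noted earlier in the paper.
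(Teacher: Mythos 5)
Your proposal is correct and follows essentially the same route as the paper: apply Proposition \ref{two cases} i) at each $m_i$ to get $\bigl|\sum_{k\neq k_0}\exp_{m_i}^{-1}x_k^i/d(m_i,x_k^i)\bigr|=\bigl|\exp_{m_i}^{-1}x_{k_0}^i/d(m_i,x_{k_0}^i)\bigr|=1$, then let $i\to\infty$. The paper compresses the limiting step into one sentence, whereas you spell out the justification (openness of the complement of the cut-locus relation and continuity of $(p,q)\mapsto\exp_p^{-1}(q)/d(p,q)$ away from it), which is exactly the implicit content of that step.
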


\begin{proof}
It suffices to note that for $i$ sufficiently large, Proposition
\ref{two cases} gives
$$\bigg|\sum_{k\neq k_0
}\frac{\exp_{m_i}^{-1}x^i_k}{d(m_i,x^i_k)}\bigg|=\bigg|\frac{\exp_{m_i}^{-1}x_{k_0}^i}{d(m_i,x_{k_0}^i)}\bigg|=1.$$
Then letting $i\rightarrow\infty$ gives the result.
\end{proof}

As a corollary to Proposition \ref{sum norm 1}, the following
proposition tells us that for generic data points, there cannot
exist two data points which are both Fr\'{e}chet medians.

\begin{proposition}
The set
\begin{align*}
C_6=\bigg\{&(x_1,\dots,x_N)\in M^N\setminus(C_1\cup C_3\cup C_5):\\
&\text{there exist}\,\,i\neq j\,\,\text{such that}\,\,
f_{\mu}(x_i)=f_{\mu}(x_j),\,\,\text{where}\,\,\mu=\mu(x_1\dots,x_N)\bigg\}
\end{align*}
is of measure zero and is closed in $M^N\setminus(C_1\cup C_3\cup
C_5)$.
\end{proposition}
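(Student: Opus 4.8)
The plan is to realise $C_6$ as a finite union of zero sets of smooth functions that are, moreover, submersions away from the excluded sets. For each ordered pair $i\neq j$ put $g_{ij}(x_1,\dots,x_N)=f_{\mu}(x_i)-f_{\mu}(x_j)$ with $\mu=\mu(x_1,\dots,x_N)$, so that $C_6=\bigcup_{i\neq j}\{g_{ij}=0\}\cap\big(M^N\setminus(C_1\cup C_3\cup C_5)\big)$. Since the Riemannian distance is continuous on $M\times M$, each $g_{ij}$ is continuous on $M^N$, hence $\{g_{ij}=0\}$ is closed in $M^N$; as $M^N\setminus(C_1\cup C_3\cup C_5)$ is open in $M^N$, this already gives that $C_6$ is closed in $M^N\setminus(C_1\cup C_3\cup C_5)$. (The set $C_5$ plays no role in this particular proposition; it is listed only so that the statement stays within the chain of reductions used elsewhere in the section.)

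For the measure-zero assertion, note first that $C_1\cup C_3$ is itself $\lambda_{M}^{\otimes N}$-null, so it is enough to prove that $\{g_{ij}=0\}\cap\big(M^N\setminus(C_1\cup C_3)\big)$ is null for every pair, and by relabelling the points we may take $i=1$, $j=2$. On $M^N\setminus C_3$ every distance $d(x_p,x_q)$ with $p\neq q$ is smooth, hence so is $g_{12}$; using $d(x_k,x_k)=0$ the two occurrences of $d(x_1,x_2)$ cancel and one finds the convenient expression
$$N\,g_{12}(x_1,\dots,x_N)=\sum_{k=3}^{N}\big(d(x_1,x_k)-d(x_2,x_k)\big),$$
in which the sum is nonempty because $N\geq3$. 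By the constant rank level set theorem it then suffices to show that $\grad g_{12}$ is nowhere vanishing on $M^N\setminus(C_1\cup C_3)$: this makes $0$ a regular value and turns $\{g_{12}=0\}\cap(M^N\setminus(C_1\cup C_3))$ into an embedded hypersurface, hence a null set, and a finite union over the pairs finishes the argument.

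The only genuinely geometric step, and the one I expect to be the main point, is this nonvanishing. Suppose $\grad g_{12}=0$ at some point of $M^N\setminus(C_1\cup C_3)$. Differentiating the displayed formula with respect to a single coordinate $x_k$, $k\geq3$, and using $\grad_{y}d(x,\cdot)=-\exp_{y}^{-1}x/d(x,y)$, one gets
$$0=N\,\grad_{x_k}g_{12}=-\frac{\exp_{x_k}^{-1}x_1}{d(x_1,x_k)}+\frac{\exp_{x_k}^{-1}x_2}{d(x_2,x_k)},$$
so the two unit vectors $\exp_{x_k}^{-1}x_1/d(x_1,x_k)$ and $\exp_{x_k}^{-1}x_2/d(x_2,x_k)$ coincide; call the common vector $w\in S_{x_k}$. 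Then $x_1$ and $x_2$ both lie on the geodesic $t\mapsto\exp_{x_k}(tw)$, and the subarc of this geodesic joining them is a subarc of a minimizing geodesic issuing from $x_k$ (namely the one running to whichever of $x_1,x_2$ is farther from $x_k$), hence is itself minimizing; since $x_1\notin\cut(x_2)$ off $C_3$, that subarc is the unique minimizing geodesic from $x_1$ to $x_2$, so $x_k$ lies on the complete geodesic extending it. As this holds for every $k\geq3$, and $x_1,x_2$ trivially lie on that same geodesic, all of $x_1,\dots,x_N$ lie on a single geodesic, i.e.\ $(x_1,\dots,x_N)\in C_1$ — contradicting our choice of point. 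This contradiction shows that $\grad g_{12}$ never vanishes on $M^N\setminus(C_1\cup C_3)$, and with it the proposition.
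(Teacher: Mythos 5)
Your proof is correct, but it reaches the nonvanishing of the gradient by a genuinely different mechanism than the paper. The paper fixes the pair $(i,j)=(N-1,N)$, differentiates $f_\mu(x_{N-1})$ and $f_\mu(x_N)$ with respect to the variable $x_N$ itself, and observes that $\grad_{x_N}f_\mu(x_{N-1})$ is a unit vector (up to the factor $1/N$) while $\grad_{x_N}f_\mu(x_N)=\sum_{k=1}^{N-1}\frac{-\exp_{x_N}^{-1}x_k}{d(x_N,x_k)}$ has norm different from $1$ \emph{precisely because $C_5$ has been removed}; so in the paper the exclusion of $C_5$ is the whole point of the argument. You instead differentiate the difference $g_{12}$ with respect to a third point $x_k$, $k\geq 3$ (available since $N\geq3$), where the two distance terms contribute two unit vectors whose equality forces $x_1,x_2,x_k$ onto a common radial geodesic from $x_k$; ranging over all $k$ and using uniqueness of the minimizing geodesic from $x_1$ to $x_2$ (guaranteed off $C_3$) puts the whole configuration on one geodesic, which is excluded by $C_1$. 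Both arguments then invoke the regular value/constant rank level set theorem and take a finite union over pairs. What your route buys is that $C_5$ is not needed at all for this proposition (you prove the larger set obtained by removing only $C_1\cup C_3$ is null), and you get the full gradient of the difference to be nonzero rather than a single component mismatch; what the paper's route buys is brevity (one line of computation, no geodesic-extension argument) at the cost of making the statement depend on the earlier elimination of $C_5$. Your treatment of closedness (continuity of $g_{ij}$ plus intersecting with the subspace) matches what the paper leaves implicit.
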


\begin{proof}
For every $(x_1,\dots,x_N)\in M^N\setminus(C_1\cup C_3\cup C_5)$,
let $f(x_1,\dots,x_N)=f_{\mu}(x_{N-1})$ and
$g(x_1,\dots,x_N)=f_{\mu}(x_{N})$, where $\mu=\mu(x_1\dots,x_N)$.
Without loss of generality, we will show that $\{(x_1,\dots,x_N)\in
M^N\setminus(C_1\cup C_3\cup
C_5):f(x_1,\dots,x_N)=g(x_1,\dots,x_N)\}$ is of measure zero. Always
by the constant rank level set theorem, it suffices to show that
$\grad f$ and $\grad g$ are nowhere identical on
$M^N\setminus(C_1\cup C_3\cup C_5)$. In fact,
\begin{align*}
\grad_{x_N}
f(x_1,\dots,x_N)&=\frac{-\exp_{x_N}^{-1}x_{N-1}}{d(x_N,x_{N-1})}\\
&\neq\sum_{k=1}^{N-1}\frac{-\exp_{x_N}^{-1}x_{k}}{d(x_N,x_{k})}
=\grad_{x_N} g(x_1,\dots,x_N),
\end{align*}
because $C_5$ is eliminated, as desired.
\end{proof}

As needed in the following proofs, the restriction of $\varphi$ on
the set
$$E=\bigg\{(x,n_1,\dots,n_N,r_1,\dots,r_N)\in V\times\mathbf{R}^N:
0<r_k<\tau(n_k)\,\,\text{for every}\,\,k\bigg\}$$ is denoted by let
$\hat{\varphi}$. Clearly, $\hat{\varphi}$ is smooth.

The lemma below is a final preparation for the main result of this
section.

\begin{lemma}\label{not equal}
Let $U$ be an open subset of $M^N\setminus\bigcup_{k=1}^{6}C_k$.
Assume that $U_1\cup U_2\subset\hat{\varphi}^{-1}(U)$ such that for
$i=1,2$, $\hat{\varphi}_i=\hat{\varphi}|_{U_i}:U_i\rightarrow U$ is
a diffeomorphism and $\sigma(U_1)\cap\sigma(U_2)=\phi$. For
simplicity, when $(x_1,\dots,x_N)\in U$, we write
$x=\sigma\circ\hat{\varphi}_1^{-1}(x_1,\dots,x_N)$,
$y=\sigma\circ\hat{\varphi}_2^{-1}(x_1,\dots,x_N)$ and
$\mu=\mu(x_1,\dots,x_N)$. Then the following two sets are of measure
zero:
\begin{align*}
&\bigg\{(x_1,\dots,x_N)\in
U:f_{\mu}(x)=f_{\mu}(y)\bigg\}\,\,\text{and}\\
&\bigg\{(x_1,\dots,x_N)\in U:\,\,\text{there
exists}\,\,k_0\,\,\text{such
that}\,\,f_{\mu}(x)=f_{\mu}(x_{k_0})\bigg\}.
\end{align*}
\end{lemma}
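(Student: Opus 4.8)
The plan is to present both sets as zero-loci of smooth functions on $U$ with explicit, well-controlled gradients, and then to invoke the elementary fact that a smooth function $h$ on the (second countable) manifold $U$ has null zero-set as soon as $\{h=0\}\cap\{\grad h=0\}$ is null — in particular it is enough that $\grad h$ be nowhere zero. First I would set up the functions. On $U$ write $x=\sigma\circ\hat{\varphi}_1^{-1}(x_1,\dots,x_N)$ and $y=\sigma\circ\hat{\varphi}_2^{-1}(x_1,\dots,x_N)$; these depend smoothly on $(x_1,\dots,x_N)$ because $\hat{\varphi}_1,\hat{\varphi}_2$ are diffeomorphisms and $\sigma$ is smooth. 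Writing $(x,\mathbf{n},\mathbf{r})=\hat{\varphi}_1^{-1}(x_1,\dots,x_N)\in E$, we have $x_k=\exp_x(r_kn_k)$ with $0<r_k<\tau(n_k)$, so $x\notin\{x_k\}\cup\cut(x_k)$ and $f_{\mu}$ is smooth near $x$; likewise $f_{\mu}$ is smooth near $y$; and since $(x_1,\dots,x_N)\notin C_3$ the map $(x_1,\dots,x_N)\mapsto f_{\mu}(x_{k_0})=\frac1N\sum_{k\neq k_0}d(x_{k_0},x_k)$ is smooth as well. The essential extra fact is that the constraint $\sum_kn_k=0$ defining $V$ says exactly $\grad f_{\mu}(x)=-\frac1N\sum_kn_k=0$, i.e. $x$ is a critical point of $f_{\mu}$, and symmetrically for $y$. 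Put $F(x_1,\dots,x_N)=f_{\mu}(x)$, $G(x_1,\dots,x_N)=f_{\mu}(y)$, $H_{k_0}(x_1,\dots,x_N)=f_{\mu}(x_{k_0})$, all smooth on $U$.

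Next I would compute gradients. Differentiating $F=\frac1N\sum_kd(x,x_k)$ with respect to $x_j$, the term coming from the motion of the ``hidden'' point $x$ is $\langle\grad f_{\mu}(x),\cdot\rangle=0$, so only the explicit occurrence of $x_j$ survives and
\[\grad_{x_j}F=\frac1N\,\frac{-\exp_{x_j}^{-1}x}{d(x_j,x)},\qquad j=1,\dots,N,\]
a vector of norm $1/N$; similarly $\grad_{x_j}G=\frac1N(-\exp_{x_j}^{-1}y)/d(x_j,y)$, and $\grad_{x_{k_0}}H_{k_0}=\frac1N\sum_{k\neq k_0}(-\exp_{x_{k_0}}^{-1}x_k)/d(x_{k_0},x_k)$. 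For the second set this already finishes things: if $\grad F=\grad H_{k_0}$ at some point of $U$, comparing the components at $x_{k_0}$ and taking norms gives $\big|\sum_{k\neq k_0}\exp_{x_{k_0}}^{-1}x_k/d(x_{k_0},x_k)\big|=1$, contradicting $(x_1,\dots,x_N)\notin C_5$. Hence $\grad(F-H_{k_0})$ is nowhere zero on $U$, so $\{F=H_{k_0}\}$ is null, and the union over the finitely many indices $k_0$ is null.

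For the first set, suppose $\grad(F-G)=0$ at a point of $U$. Then for every $j$ the unit vectors $\exp_{x_j}^{-1}x/d(x_j,x)$ and $\exp_{x_j}^{-1}y/d(x_j,y)$ coincide; since $r_j<\tau(n_j)$ forces $x\notin\cut(x_j)$ and, symmetrically from $\hat{\varphi}_2^{-1}$, $y\notin\cut(x_j)$, these are the initial velocities of the unique minimizing geodesics $x_j\to x$ and $x_j\to y$, so $x_j$, $x$ and $y$ lie on a common geodesic with $x,y$ on the same side of $x_j$. As $\sigma(U_1)$ and $\sigma(U_2)$ are disjoint we have $x\neq y$; hence, provided the minimizing geodesic from $x$ to $y$ is unique, running along it shows that each $x_j$ equals $\exp_x(\pm d(x,x_j)w^{*})$ for the common initial direction $w^{*}\in T_xM$ of that geodesic, so $x_1,\dots,x_N$ lie on a single geodesic — impossible since $(x_1,\dots,x_N)\notin C_1$. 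Therefore $\{\grad(F-G)=0\}\cap U\subset\{(x_1,\dots,x_N)\in U:\,y\in\cut(x)\}$.

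The main obstacle is precisely this last reduction: one must still rule out, up to a null set, the locus where the two critical points are joined by more than one minimizing geodesic. I would dispose of it by the device used in Lemma \ref{diffeo null}. The set $\{(x_1,\dots,x_N)\in U:\,y\in\cut(x)\}$ is the preimage of the null subset $\{(p,q)\in M\times M:\,q\in\cut(p)\}$ under the smooth assignment $(x_1,\dots,x_N)\mapsto(x,y)$; checking that this assignment is a submersion onto $M\times M$ — which, via the diffeomorphisms $\hat{\varphi}_1,\hat{\varphi}_2$ and the submersivity of $\sigma$, amounts to showing that $x$ still moves freely along the fibres of $y$, and which is where the real work of this last step lies — the null set pulls back to a null set; alternatively one transports the locus to $U_1$ by $\hat{\varphi}_1$, obtaining $\{(x,\mathbf{n},\mathbf{r})\in U_1:\,\tilde y(x,\mathbf{n},\mathbf{r})\in\cut(x)\}$ with $\tilde y=y\circ\hat{\varphi}_1$, and integrates the indicator to zero by Fubini using $\lambda_M(\cut(x))=0$ and the submersivity of $\sigma$, exactly as in the proof of Lemma \ref{diffeo null}. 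Granting this, $\{\grad(F-G)=0\}\cap U$ is null, hence so is $\{F=G\}\cap U$, which completes the proof.
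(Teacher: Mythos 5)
Your core argument is the paper's: compute $\grad F$ by the first variation of arc length, use the constraint $\sum_k n_k=0$ (i.e.\ the criticality of $x$ for $f_{\mu}$) to kill the term coming from the motion of the hidden point $x$, obtain $\grad_{x_j}F=\frac1N\,(-\exp_{x_j}^{-1}x)/d(x_j,x)$, and conclude via the regular level set theorem that the loci in question are null hypersurfaces wherever the relevant gradient difference is nonzero. Your treatment of the second set --- comparing the norms of the $x_{k_0}$-components, which are $1/N$ for $F$ and $\frac1N\big|\sum_{k\neq k_0}\exp_{x_{k_0}}^{-1}x_k/d(x_{k_0},x_k)\big|\neq 1/N$ for $H_{k_0}$ because $C_5$ has been removed --- is exactly the argument the paper leaves implicit with the words ``the proof for the second one is similar,'' and it is correct.

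Where you and the paper part ways is on why $\grad(F-G)$ cannot vanish. The paper simply asserts that $(x_1,\dots,x_N)\notin C_1$, $N\geq3$ and $x\neq y$ force $\grad f_1\neq\grad f_2$; as you observe, the deduction ``each $x_j$ lies on a common minimizing geodesic with $x$ and $y$, hence all $x_j$ lie on a single geodesic'' requires the minimizing geodesic from $x$ to $y$ to be unique, i.e.\ $y\notin\cut(x)$. You have isolated a genuine subtlety that the paper's one-line justification does not address. Your repair, however, is not complete: the entire burden is shifted onto the claim that $\{(x_1,\dots,x_N)\in U:\,y\in\cut(x)\}$ is null, and both of your suggested routes reduce this to a submersion property --- that $y$ still moves surjectively at the level of differentials when $x$ is held fixed --- which you explicitly do not prove (``Granting this\dots''). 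This is not a routine verification: $d(\sigma\circ\hat{\varphi}_2^{-1})$ is surjective on all of $TU$, but its restriction to the codimension-$l$ distribution $\ker d(\sigma\circ\hat{\varphi}_1^{-1})$ could a priori fail to be surjective; and Lemma \ref{diffeo null} does not apply as stated, because there the scalar variable $r_N$ being integrated out moves the point $\exp_x(r_Nn_N)$ while the cut locus $\cut(x)$ stays fixed, whereas here $x$ and $y$ are coupled through the same data points. So for the second set your proof is complete, while for the first set you must either supply the missing submersion argument or otherwise exclude, up to a null set, the configurations with $y\in\cut(x)$ --- a point which, to be fair, the paper's own proof also passes over in silence.
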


\begin{proof}
We only show the first set is null, since the proof for the second
one is similar. Let $f_1(x_1,\dots,x_N)=f_{\mu}(x)$,
$f_2(x_1,\dots,x_N)=f_{\mu}(y)$ and $w_k\in T_{x_k}M$. Then the
first variational formula of arc length (see \cite[p. 5]{Cheeger})
yields that
\begin{align*}
&\frac{d}{dt}\bigg|_{t=0}f_1(\exp_{x(t)}(tw_1),\dots\exp_{x(t)}(tw_N))\\
=&\sum_{k=1}^{N}
\bigg(\bigg\langle\frac{-\exp_{x_k}^{-1}x}{d(x_k,x)},w_k\bigg\rangle-\bigg\langle\dot{x}(0),\frac{\exp_{x}^{-1}x_k}{d(x,x_k)}\bigg\rangle\bigg)\\
=&\sum_{k=1}^{N}
\bigg\langle\frac{-\exp_{x_k}^{-1}x}{d(x_k,x)},w_k\bigg\rangle-\bigg\langle\dot{x}(0),\sum_{k=1}^{N}\frac{\exp_{x}^{-1}x_k}{d(x,x_k)}\bigg\rangle\\
=&\sum_{k=1}^{N}
\bigg\langle\frac{-\exp_{x_k}^{-1}x}{d(x_k,x)},w_k\bigg\rangle.
\end{align*}
Hence $$\grad
f_1(x_1,\dots,x_N)=\bigg(\frac{-\exp_{x_1}^{-1}x}{d(x_1,x)},\dots,\frac{-\exp_{x_N}^{-1}x}{d(x_N,x)}\bigg).$$
Observe that $(x_1,\dots,x_N)\notin C_1$, $N\geq3$ and $x\neq y$, we
have $\grad f_1\neq\grad f_2 $ on $U$. Then the constant rank level
set theorem yields that $\{f_1=f_2\}$ is a regular submanifold of
$U$ of codimension $1$, hence it is of measure zero. The proof is
complete.
\end{proof}

The following theorem is the main result of this section.

\begin{theorem}\label{unique}
$\mu(x_1,\dots,x_N)$ has a unique Fr\'{e}chet median for almost
every $(x_1,\dots,x_N)\in M^N$.
\end{theorem}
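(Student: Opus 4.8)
The plan is to show that the bad set is contained in $\bigcup_{k=1}^{6} C_k$ together with one more null set accounting for the multiplicity of Fréchet medians coming from the ``interior'' branches of $\hat\varphi$, and that each piece has measure zero. We have already established that $C_1,\dots,C_4$ are null and closed (or closed in the relevant complement), that $C_5$ is null (Proposition \ref{sum norm 1}), and that $C_6$ is null. So fix a point $(x_1,\dots,x_N)\in M^N\setminus\bigcup_{k=1}^{6}C_k$; since this complement is open and second countable, it suffices to produce, around each such point, a neighbourhood on which the set of data with non-unique median is null. The first reduction is to invoke Proposition \ref{finite}: outside $C_1\cup C_2$ the median set $Q(x_1,\dots,x_N)$ is finite. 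Moreover, since we are outside $C_4$, no median lies on any cut locus $\cut(x_k)$, and since we are outside $C_6$, no two data points are simultaneously medians. Combining Proposition \ref{two cases} with the fact that medians avoid the cut loci, every median $m$ either coincides with exactly one data point $x_{k_0}$ (and then $|\sum_{k\neq k_0}\exp_{x_{k_0}}^{-1}x_k/d(x_{k_0},x_k)|\le 1$, with equality excluded because we are outside $C_5$, so the inequality is strict), or $m$ lies in $\sigma(\hat\varphi^{-1}(x_1,\dots,x_N))$ with all $r_k\in(0,\tau(n_k))$.

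Next I would set up the local stratification of $\hat\varphi$. Using Lemma \ref{dim}, the manifold $V$ has dimension $N(l-1)$, so $V\times\mathbf{R}^N$ and $M^N$ have the same dimension $Nl$; outside $C_2$ the point $(x_1,\dots,x_N)$ is a regular value of $\varphi_{0,L}$, hence $\hat\varphi$ is a local diffeomorphism near each preimage, and $\hat\varphi^{-1}(x_1,\dots,x_N)$ is a finite set of points. By the stack-of-records argument (as in the proof of Lemma \ref{diffeo null 1}), there is a neighbourhood $U$ of $(x_1,\dots,x_N)$, contained in $M^N\setminus\bigcup_{k=1}^{6}C_k$, and finitely many disjoint open sets $U_1,\dots,U_h\subset E$ with $\hat\varphi_i:=\hat\varphi|_{U_i}:U_i\to U$ a diffeomorphism onto $U$, such that for every $(y_1,\dots,y_N)\in U$ every ``interior'' median of $\mu(y_1,\dots,y_N)$ equals $\sigma\circ\hat\varphi_i^{-1}(y_1,\dots,y_N)$ for some $i$. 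Shrinking $U$ if necessary, we may also assume the branch base points are genuinely separated, i.e.\ $\sigma(U_i)\cap\sigma(U_j)=\phi$ for $i\neq j$, and that each $x_{k_0}$ that is a candidate median stays isolated from the interior branches. Thus for $(y_1,\dots,y_N)\in U$ the set $Q(y_1,\dots,y_N)$ has at least two elements only if two of the finitely many candidate points — namely the $h$ branch values $\sigma\circ\hat\varphi_i^{-1}$ and the data points $y_{k_0}$ eligible by the strict inequality above — realise the same value of $f_{\mu}$.

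Finally I would invoke Lemma \ref{not equal}: for each pair $i\neq j$ the set $\{(y_1,\dots,y_N)\in U: f_\mu(\sigma\circ\hat\varphi_i^{-1})=f_\mu(\sigma\circ\hat\varphi_j^{-1})\}$ is null, and for each $i$ and each $k_0$ the set $\{(y_1,\dots,y_N)\in U: f_\mu(\sigma\circ\hat\varphi_i^{-1})=f_\mu(y_{k_0})\}$ is null. There are only finitely many such pairs, so their union is null, and on the complement of this union within $U$ the function $f_\mu$ is injective on the finite candidate set, forcing $|Q(y_1,\dots,y_N)|=1$. Covering $M^N\setminus\bigcup_{k=1}^{6}C_k$ by countably many such neighbourhoods $U$ and adding the null set $\bigcup_{k=1}^{6}C_k$ shows that the set of $(x_1,\dots,x_N)$ with a non-unique Fréchet median is contained in a countable union of null sets, hence is null. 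The main obstacle is the bookkeeping in the middle paragraph: one must be careful that the stack-of-records neighbourhood can be chosen small enough that \emph{all} interior medians (not merely those near a fixed preimage) are captured by the finitely many branches $\hat\varphi_i$, and that a median coinciding with a data point cannot degenerate — this is exactly why $C_4$ (cut locus), $C_5$ (the critical norm-one case, which by Lemma \ref{norm 1} is the only way an interior median can collide with a data point in a limit) and $C_6$ had to be removed beforehand.
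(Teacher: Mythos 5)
Your proposal is correct and follows essentially the same route as the paper: localize via second countability, use the regular-value/stack-of-records structure of $\hat{\varphi}$ to reduce the candidate medians over a small neighbourhood to finitely many branches $\sigma\circ\hat{\varphi}_i^{-1}$ plus at most one data point, and then kill the finitely many coincidences of $f_{\mu}$-values with Lemma \ref{not equal}. The ``main obstacle'' you flag --- guaranteeing that \emph{every} median of a nearby configuration is captured by these finitely many candidates --- is exactly what the paper settles by combining Theorem \ref{consistency} (upper semicontinuity of the median set) with Lemma \ref{norm 1}, so you should invoke Theorem \ref{consistency} explicitly at that point.
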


\begin{proof}
Since $M^N\setminus\bigcup_{k=1}^{6}C_k$ is second countable, it
suffices to show that for every $(x_1,\dots,x_N)\in
M^N\setminus\bigcup_{k=1}^{6}C_k$, there exists $\delta>0$ such that
$\mu(y_1,\dots,y_N)$ has a unique Fr\'{e}chet median for almost
every $(y_1,\dots,y_N)\in B(x_1,\delta)\times\dots\times
B(x_N,\delta)$ . In fact, let $(x_1,\dots,x_N)\in
M^N\setminus\bigcup_{k=1}^{6}C_k$, without loss of generality, we
can assume that $Q(x_1,\dots,x_N)=\{y,z,x_N\}$, where
$y,z\notin\{x_1,\dots,x_N\}$. Assume that
$Y=(y,n_1,\dots,n_N,r_1,\dots,r_N) $ and
$Z=(z,v_1,\dots,v_N,t_1,\dots,t_N)\in
\hat{\varphi}^{-1}(x_1,\dots,x_N)$. Since $(x_1,\dots,x_N)$ is a
regular value of $\hat{\varphi}$, we can choose a $\delta>0$ such
that there exist neighborhoods $U_1$ of $Y$ and $U_2$ of $Z$ such
that for $i=1,2$,
$\hat{\varphi}_i=\hat{\varphi}|_{U_i}:U_i\rightarrow U$ is
diffeomorphism, $\sigma(U_1)\cap\sigma(U_2)=\phi$ and
$B(x_N,\delta)\cap(\sigma(U_1)\cup\sigma(U_2))=\phi$, where
$U=B(x_1,\delta)\times\dots\times B(x_N,\delta)$. Furthermore, by
Theorem \ref{consistency} and Lemma \ref{norm 1}, we can also assume
that for every $(y_1,\dots,y_N)\in B(x_1,\delta)\times\dots\times
B(x_N,\delta)$, $Q(y_1,\dots,y_N)\subset
B(x_N,\delta)\cup\sigma(U_1)\cup\sigma(U_2)$ and
$Q(y_1,\dots,y_N)\cap B(x_N,\delta)\subset\{y_N\}$. Now it suffices
to use Lemma \ref{not equal} to complete the proof.
\end{proof}

\begin{remark}
In probability language, Theorem \ref{unique} is equivalent to say
that if $(X_1,\dots,X_N)$ is an $M^N$-valued random variable with
density, then $\mu(X_1,\dots,X_N)$ has a unique Fr\'{e}chet median
almost surely. Clearly, the same statement is also true if
$X_1,\dots,X_N$ are independent and $M$-valued random variables with
desity.
\end{remark}

\textsl{Acknowledgements} The author is very grateful to his PhD
advisors: Marc Arnaudon and Fr\'{e}d\'{e}ric Barbaresco for their
inspiring guidance, constant help and encouragement.

\end{document}